\documentclass[12pt,twoside]{amsart}
\usepackage{amssymb,amsmath,amsthm, amscd, enumerate, mathrsfs}
\usepackage{graphicx, hhline}
\usepackage[all]{xy}
\usepackage[usenames]{color}
\usepackage{hyperref}
\hypersetup{colorlinks=true}

\title{On semipositivity, injectivity and vanishing theorems}
\author{Osamu Fujino} 
\date{2016/9/27, version 0.12}
\keywords{semipositivity, nefness, 
vanishing theorem, injectivity theorem, 
canonical ring, pluricanonical divisor}

\subjclass[2010]{Primary 14F17; Secondary 14E30, 14D07}
\address{Department of Mathematics, Graduate School of Science, 
Osaka University, Toyonaka, Osaka 560-0043, Japan}
\email{fujino@math.sci.osaka-u.ac.jp}
\dedicatory{Dedicated to Professor Steven Zucker on the 
occasion of his 65th birthday}

\newcommand{\Supp}[0]{{\operatorname{Supp}}}
\newtheorem{thm}{Theorem}[section]
\newtheorem{lem}[thm]{Lemma}
\newtheorem{cor}[thm]{Corollary}
\newtheorem{prop}[thm]{Proposition}
\newtheorem{conj}[thm]{Conjecture}
\newtheorem*{claim}{Claim}

\theoremstyle{definition}
\newtheorem{defn}[thm]{Definition}
\newtheorem{problem}[thm]{Problem}
\newtheorem{rem}[thm]{Remark}
\newtheorem*{ack}{Acknowledgments} 
\newtheorem{say}[thm]{}

\newtheorem{ex}[thm]{Example}
\begin{document}

\begin{abstract}
This is a survey article on the recent developments of 
semipositivity, injectivity, and vanishing theorems for 
higher-dimensional complex projective 
varieties. 
\end{abstract}

\maketitle

\tableofcontents 

\section{Introduction} 

This paper is a survey article on the recent developments 
of semipositivity, injectivity, and 
vanishing theorems for higher-dimensional complex 
projective varieties (see, for example, \cite{fujino-vanishing}, 
\cite{fujino-injectivity}, \cite{fujino-foundation}, 
\cite{fujino-fujisawa}, and \cite{ffs}). 

We know that many important generalizations of 
the Kodaira vanishing theorem,
for example, the Kawamata--Viehweg vanishing theorem, Koll\'ar's 
injectivity, torsion-free, 
and vanishing theorems, the Nadel vanishing theorem, and so on, 
were obtained in 1980s. 
They have already played crucial roles in the study 
of higher-dimensional complex projective varieties. 
We note that the Fujita--Zucker--Kawamata semipositivity 
theorem for direct images of relative canonical bundles 
has also played important roles. One of my main 
motivations was to establish a more general 
cohomological package based on the theory of mixed 
Hodge structures on cohomology with compact support. 
Now I think that our new results are almost satisfactory (see 
Theorems \ref{f-thm2.12}, \ref{f-thm2.13}, and \ref{f-thm3.6}). 
They are waiting for applications. 
I hope that the reader would find various applications of 
our semipositivity, injectivity, and vanishing theorems.

Let us see the contents of this paper. 
In Section \ref{f-sec2}, we first discuss the Hodge 
theoretic aspect of 
Kodaira-type vanishing theorems (see, for example, 
\cite{esnault-viehweg}, \cite[Part III]{kollar-book}, 
\cite{fujino-vanishing}, \cite{fujino-injectivity}, 
and \cite{fujino-foundation}). 
I emphasize the importance of 
Koll\'ar's injectivity theorem and its generalizations. 
I think that 
one of the most important recent developments is the 
introduction of mixed Hodge structures on 
cohomology with compact support 
in order to generalize 
Koll\'ar's injectivity theorem (see, for example, 
\cite{fujino-fundamental}, \cite{fujino-vanishing}, 
\cite{fujino-injectivity}, and \cite{fujino-foundation}). 
Next we discuss Enoki's injectivity theorem, 
which is an analytic counterpart of 
Koll\'ar's injectivity theorem. 
I like Enoki's idea since it is very simple and powerful. 
Enoki's proof only uses the standard results 
of the theory of harmonic forms on compact K\"ahler manifolds. 
Although I obtained some generalizations of Enoki's 
injectivity theorem and their applications (see 
\cite{fujino-trans} and \cite{fujino-trans2}), 
I think that they are not satisfactory for most geometric applications. 

In Section \ref{f-sec3}, we treat several semipositivity theorems for 
direct images of relative (log) canonical bundles and 
relative 
pluricanonical bundles. 
The (numerical) semipositivity of direct images of 
relative (log) canonical bundles discussed in this paper is 
more or less Hodge theoretic. 
Note that mixed Hodge structures on cohomology with compact support 
are also very useful for semipositivity theorems. 
By considering their variations, 
we can prove a powerful semipositivity theorem 
by the theory of gradedly polarizable 
admissible variation of mixed Hodge structure (see \cite{fujino-fujisawa} and 
\cite{ffs}). 
Unfortunately, since I am not familiar with 
the recent developments of semipositivity theorems by $L^2$ methods, 
I do not discuss the analytic aspect of 
semipositivity theorems in this paper. 
In Subsection \ref{f-subsec3.1}, 
we explain new semipositivity theorems 
for direct images of relative 
pluricanonical bundles with the aid of 
the minimal model program (see \cite{fujino-direct}). 
I think that it is highly desirable 
to recover them without using the minimal model program. 

In Section \ref{f-sec4}, we will see that 
pluricanonical divisors sometimes 
behave much better than canonical divisors. 
We discuss two different topics. 
In Subsection \ref{f-subsec4.1}, 
we explain Koll\'ar's famous result on plurigenera in \'etale 
covers of smooth projective varieties of general type. 
We give Lazarsfeld's proof using 
the theory of asymptotic multiplier ideal sheaves for the 
reader's convenience and 
also a proof based on the minimal model program. 
The proof based on the minimal model program is harder than 
Lazarsfeld's proof but is interesting and natural 
from the minimal model theoretic viewpoint. 
In Subsection \ref{f-subsec4.2}, 
we explain Viehweg's ampleness theorem for direct images of 
relative pluricanonical bundles, which is buried in Viehweg's papers. 
I think that these results may help the reader to understand the reason 
why 
we should consider pluricanonical divisors for 
the study of higher-dimensional algebraic varieties. 

In Section \ref{f-sec5}, we quickly review the finite generation of (log) 
canonical rings due to Birkar--Cascini--Hacon--M\textsuperscript{c}Kernan. 
I want to emphasize that 
we need the semipositivity theorem discussed in Section \ref{f-sec3} 
when we treat (log) canonical rings for varieties which are not of 
(log) general type (see \cite{fujino-mori} and \cite{fujino-some}). We also explain 
the nonvanishing conjecture, which is one of the most important 
conjectures for higher-dimensional 
complex projective varieties. 

Section \ref{f-sec6} is an appendix, where 
we collect some definitions with the intention of helping the reader to understand 
this paper. The reader can read each section separately. 

\begin{ack}
The author was partially supported by Grant-in-Aid for 
Young Scientists (A) 24684002 and Grant-in-Aid for 
Scientific Research (S) 24224001 from JSPS. 
He thanks Professor Steven Zucker for useful comments 
and advice. 
He also thanks Professor Fabrizio Catanese for answering his questions and 
Yoshinori Gongyo for various discussions. 
Finally, he thanks Shin-ichi Matsumura for sending his preprints. 
\end{ack}

We will work over $\mathbb C$, the complex number 
field, throughout this paper. In this 
paper, a scheme means a separated scheme of finite type 
over $\mathbb C$. 

\section{On injectivity theorems and 
vanishing theorems}\label{f-sec2} 

I think that one of the most fundamental results for complex 
projective varieties is Koll\'ar's 
injectivity theorem (see Theorem \ref{f-thm2.1}). 
The importance of the Kawamata--Viehweg (or Nadel) vanishing theorem 
for the study of higher-dimensional complex algebraic varieties 
is repeatedly emphasized in many papers and textbooks (see, 
for example, \cite{kollar-mori} and \cite{lazarsfeld}). 
On the other hand, I think that the importance of 
Koll\'ar's injectivity theorem has not been 
emphasized so far in the standard literature. 

Let us recall Koll\'ar's injectivity theorem. 

\begin{thm}[{\cite[Theorem 2.2]{kollar-higher1}}]\label{f-thm2.1}
Let $X$ be a smooth projective variety and let $L$ be a semiample 
Cartier divisor on $X$, that is, 
the complete linear system 
$|mL|$ has no base points for some positive 
integer $m$. 
Let $D$ be a member of $|kL|$ for some positive integer $k$. 
Then 
$$
H^i(X, \mathcal O_X(K_X+lL))\to H^i(X, \mathcal O_X(K_X+(l+k)L)), 
$$ 
induced by the natural 
inclusion $\mathcal O_X\hookrightarrow\mathcal O_X(D)\simeq 
\mathcal O_X(kL)$, 
is injective for every $i$ and every positive integer $l$. 
\end{thm}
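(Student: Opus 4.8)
The plan is to give the Hodge-theoretic proof via cyclic coverings, in the spirit of \cite{kollar-higher1}; at the end I comment on Enoki's analytic alternative. First I would reduce to the case where $\Supp D$ is a simple normal crossing divisor: pulling back by a log resolution $\mu\colon X'\to X$ of $(X,D)$ one has $\mu_*\mathcal O_{X'}(K_{X'})=\mathcal O_X(K_X)$ and $R^j\mu_*\mathcal O_{X'}(K_{X'})=0$ for $j>0$ by Grauert--Riemenschneider, while $\mu^*L$ is still semiample and $\mu^*D\in|k\mu^*L|$, so it is enough to treat $X'$. (When $l$ is large relative to the $m$ for which $|mL|$ is basepoint free, the statement is a quick consequence of Koll\'ar's torsion-freeness and vanishing theorems, using that $s_D^m$ is the pullback of a section of an ample bundle on the image of $\phi_{|mL|}$; the covering argument below treats all $l\ge 1$ uniformly, and its genuine content is the range $1\le l<m$.)

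Fix a positive integer $N$ that is large relative to $l$, $k$ and the multiplicities of $D$, and chosen so that $|(N-k)L|$ is basepoint free; pick a general $B\in|(N-k)L|$ with $\Supp(D+B)$ simple normal crossing. The section $s_Ds_B$ of $\mathcal O_X(NL)$ determines an $N$-fold cyclic cover; let $\widetilde X$ be a $\mu_N$-equivariant resolution of its normalization, which is a smooth projective variety, and $\pi\colon\widetilde X\to X$ the resulting generically finite morphism. Decomposing $\pi_*\mathcal O_{\widetilde X}(K_{\widetilde X})$ into $\mu_N$-eigensheaves (these may be computed on the normalization, which has only rational singularities) and using that the floor corrections $\lfloor\tfrac{i}{N}(D+B)\rfloor$ vanish for $i=l$ and $i=l+k$---which is where the size of $N$ enters---one finds that $H^p(X,\mathcal O_X(K_X+lL))$ and $H^p(X,\mathcal O_X(K_X+(l+k)L))$ occur as direct summands of $H^p(\widetilde X,\mathcal O_{\widetilde X}(K_{\widetilde X}))$, and that on these summands the map induced by multiplication by $s_D$ is realized, up to the relevant character of $\mu_N$, by a restriction-type morphism attached to an inclusion of logarithmic de Rham complexes $\Omega^\bullet_{\widetilde X}(\log\Sigma_0)\hookrightarrow\Omega^\bullet_{\widetilde X}(\log\Sigma)$ for suitable reduced simple normal crossing divisors $\Sigma_0\subseteq\Sigma$ supported on $\pi^{-1}(D+B)$.

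The final step is Hodge theory on the smooth projective variety $\widetilde X$. By the $E_1$-degeneration of the logarithmic Hodge-to-de Rham spectral sequences, $H^p(\widetilde X,\Omega^n_{\widetilde X}(\log\Sigma_0))$ and $H^p(\widetilde X,\Omega^n_{\widetilde X}(\log\Sigma))$, with $n=\dim\widetilde X$, are the bottom Hodge pieces $F^n$ of the mixed Hodge structures carried by $H^{n+p}$ of the complements of $\Sigma_0$ and of $\Sigma$, and the map of the previous paragraph is the $F^n$-part of the restriction-type morphism between these two mixed Hodge structures. Since morphisms of mixed Hodge structures are strict with respect to $F$, the kernel of its $F^n$-part is the $F^n$-part of a sub-mixed-Hodge-structure, and it remains to check that this kernel carries no class of Hodge type $(n,p)$. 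This is exactly where the semiampleness of $L$ and the hypothesis $l\ge 1$ are used: they force the eigensheaf in play to be $\mathcal O_X(K_X+lL)$ with $l\ge 1$, whose positivity rules out the residue/Gysin contributions along $\pi^{-1}(D)$ that would otherwise lie in the kernel (that $l\ge 1$ cannot be omitted is already visible for $X=\mathbb P^1$ and $L=\mathcal O_{\mathbb P^1}(1)$). Passing back through the $\mu_N$-eigendecomposition then transports the injectivity down to $X$.

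I expect this last step to be the real obstacle: matching multiplication by $s_D$ with an honest morphism of mixed Hodge structures through the cyclic-cover bookkeeping, and then converting strictness into injectivity by controlling the Hodge type $(n,p)$ part of the kernel---precisely the point at which semiampleness is indispensable and the elementary reductions do not help. For contrast, Enoki's route bypasses covers altogether: one represents classes in $H^p(X,\mathcal O_X(K_X+lL))$ by harmonic forms with respect to a smooth semipositive metric on $L$ furnished by semiampleness, and then the Bochner--Kodaira identity shows that multiplication by $s_D$ carries harmonic forms to harmonic forms with no kernel; the trade-off is having to work with the harmonic theory on a compact K\"ahler manifold instead of with algebraic Hodge theory.
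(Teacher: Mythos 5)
Your overall strategy---reduce to simple normal crossing support by a log resolution, pass to an $N$-fold cyclic cover branched along $D+B$ with $B\in |(N-k)L|$ general, decompose into $\mu_N$-eigensheaves, and invoke $E_1$-degeneration of a logarithmic Hodge-to-de Rham spectral sequence---is the standard one, and it is the route this paper points to: the paper does not reprove Theorem \ref{f-thm2.1} but refers to \cite[Chapter 9]{kollar-book} and \cite{esnault-viehweg}, noting only that the theorem follows from the $E_1$-degeneration of \ref{f-say2.7}. Your reductions are fine: the Grauert--Riemenschneider step, the choice of $N$ killing the floor corrections, and the identification of $H^p(X,\mathcal O_X(K_X+lL))$ and $H^p(X,\mathcal O_X(K_X+(l+k)L))$ as eigensummands of $H^p(\widetilde X,\omega_{\widetilde X})$ all go through.

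The gap is the final step, which you yourself flag as ``the real obstacle.'' You never actually exhibit the morphism of (mixed) Hodge structures whose $F^n$-part is multiplication by $s_D$, and the mechanism you propose is doubtful on two counts. First, multiplication by $s_D$ shifts the $\mu_N$-character by $k$, so it cannot arise as an eigencomponent of a single $\mu_N$-equivariant inclusion $\Omega^\bullet_{\widetilde X}(\log \Sigma_0)\hookrightarrow \Omega^\bullet_{\widetilde X}(\log \Sigma)$ of complexes on $\widetilde X$; the bookkeeping you defer is precisely the content. Second, ``the positivity of $K_X+lL$ rules out the residue/Gysin contributions in the kernel'' is an expectation, not an argument: in the actual proof the semiampleness of $L$ is consumed entirely in choosing $B$, and no positivity enters the Hodge-theoretic step. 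What closes the proof---and what this paper demonstrates in miniature in its proof of Theorem \ref{f-thm2.3}---is a factorization rather than a kernel analysis: the $E_1$-degeneration of \ref{f-say2.9}, applied on $\widetilde X$ and decomposed into eigenspaces for the rank-one local systems $G_i$ on $X\setminus \Supp(D+B)$, gives surjectivity of the comparison map from $H^j(X,\iota_!G_i)$ onto the cohomology of the smallest coherent extension; since $\iota_!G_i$ is contained in every intermediate coherent extension obtained by subtracting an effective divisor supported on $\Supp(D+B)$, the composite surjection forces the relevant intermediate restriction map to be surjective, and Serre duality converts this into the asserted injectivity. The hypotheses $l\geq 1$ and $N\gg 0$ serve only to place both twists among the nontrivial eigencomponents with vanishing floor corrections. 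As written, your proposal assembles the right objects but stops exactly where the theorem's content begins.
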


\begin{rem}\label{f-rem2.2}
If we assume that $L$ is ample, 
$l=1$, and $k$ is sufficiently large in Theorem \ref{f-thm2.1}, 
then we obtain that 
$$
H^i(X, \mathcal O_X(K_X+L))\hookrightarrow 
H^i(X, \mathcal O_X(K_X+(1+k)L))=0
$$ 
for every $i>0$ by Serre's vanishing theorem. 
Therefore, Theorem \ref{f-thm2.1} quickly recovers 
the Kodaira vanishing theorem for projective varieties 
(see Theorem \ref{f-thm2.3} below). 
\end{rem}

For the reader's convenience, 
we recall:  

\begin{thm}[Kodaira vanishing theorem for projective varieties]\label{f-thm2.3} 
Let $X$ be a smooth projective variety and let $L$ be 
an ample Cartier divisor on $X$. 
Then we have 
$$
H^i(X, \mathcal O_X(K_X+L))=0
$$ 
for every $i>0$. 
\end{thm}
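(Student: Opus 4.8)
The plan is to deduce the statement directly from Koll\'ar's injectivity theorem (Theorem \ref{f-thm2.1}) together with Serre's vanishing theorem, exactly along the lines sketched in Remark \ref{f-rem2.2}. Since $L$ is ample it is in particular semiample, so the hypotheses of Theorem \ref{f-thm2.1} are met for this $L$.

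First I would fix a positive integer $k$ large enough that two conditions hold simultaneously: the linear system $|kL|$ is nonempty, so that we may choose a member $D\in|kL|$; and $H^i(X,\mathcal O_X(K_X+(1+k)L))=0$ for every $i>0$. Both hold for $k\gg 0$ by Serre's vanishing theorem, applied to the ample divisor $L$ together with the coherent sheaves $\mathcal O_X$ and $\mathcal O_X(K_X+L)$ respectively. Next I would invoke Theorem \ref{f-thm2.1} with this $L$, this $k$, the chosen $D\in|kL|$, and $l=1$. It gives that the natural map
$$
H^i(X,\mathcal O_X(K_X+L))\to H^i(X,\mathcal O_X(K_X+(1+k)L))
$$
is injective for every $i$. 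Since the target vanishes for every $i>0$ by the choice of $k$, we conclude $H^i(X,\mathcal O_X(K_X+L))=0$ for every $i>0$, which is the desired assertion.

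There is no real obstacle in this argument once Theorem \ref{f-thm2.1} is granted; the only point requiring (routine) attention is selecting a single $k$ that simultaneously ensures $|kL|\neq\emptyset$ and the Serre vanishing of the relevant twist, and this is automatic. The substantive difficulty has of course been absorbed into the proof of Theorem \ref{f-thm2.1} itself, which is the Hodge-theoretic core of the circle of results discussed in this section; an independent classical route would instead go through the degeneration of the Hodge-to-de Rham spectral sequence (equivalently the Akizuki--Nakano vanishing theorem), but the point of Remark \ref{f-rem2.2} is precisely that the injectivity theorem subsumes it.
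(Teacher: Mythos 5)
Your argument is correct: ampleness implies semiampleness, so Theorem \ref{f-thm2.1} applies, and for $k\gg 0$ Serre vanishing gives both $|kL|\neq\emptyset$ and $H^i(X,\mathcal O_X(K_X+(1+k)L))=0$ for $i>0$, whence the injectivity forces $H^i(X,\mathcal O_X(K_X+L))=0$. This is precisely the derivation recorded in Remark \ref{f-rem2.2}. However, it is not the route the paper takes in its actual proof of Theorem \ref{f-thm2.3}. There, the author deliberately gives a direct argument: after a covering trick reducing to the case where $|L|$ is base point free, one picks a smooth member $D\in|L|$ by Bertini and uses the $E_1$-degeneration of
$$
E^{p,q}_1=H^q(X,\Omega^p_X(\log D)\otimes\mathcal O_X(-D))\Rightarrow H^{p+q}_c(X\setminus D,\mathbb C)
$$
from \ref{f-say2.9} to see that $H^j(X,\iota_!\mathbb C_{X\setminus D})\to H^j(X,\mathcal O_X(-D))$ is surjective; factoring this map through $H^j(X,\mathcal O_X(-mD))$, which vanishes for $j<\dim X$ and $m\gg 0$ by Serre vanishing, gives $H^j(X,\mathcal O_X(-D))=0$ for $j<\dim X$, and Serre duality concludes. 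Your approach is shorter but outsources all the Hodge theory to the proof of Theorem \ref{f-thm2.1}; the paper's proof is chosen for pedagogical reasons, to exhibit the mixed-Hodge-structures-on-compact-support machinery of \ref{f-say2.9} in the simplest nontrivial case, since that machinery is the engine behind the later generalizations (Theorems \ref{f-thm2.12}, \ref{f-thm2.13}, and \ref{f-thm3.6}). Both proofs are valid; yours is the standard quick deduction, the paper's is an illustration of its central technique.
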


We will give a proof of Theorem \ref{f-thm2.3} after 
we discuss $E_1$-degenerations of 
Hodge to de Rham type spectral sequences. 

Note that Theorem \ref{f-thm2.1} is obviously a generalization of 
Tankeev's pioneering result. 

\begin{thm}[{\cite[Proposition 1]{tankeev}}]\label{f-thm2.4}
Let $X$ be a smooth projective variety with 
$\dim X\geq 2$. 
Assume that the complete linear system $|L|$ has no 
base points and determines a morphism $\Phi_{|L|}: X\to Y$ onto a 
variety $Y$ with $\dim Y\geq 2$. 
Then 
$$
H^0(X, \mathcal O_X(K_X+2D))\to H^0(D, \mathcal O_D((K_X+2D)|_D))
$$ 
is surjective for almost all divisors $D\in |L|$. 
Equivalently, 
$$H^1(X, \mathcal O_X(K_X+D))\to H^1(X, \mathcal O_X(K_X+2D))$$ 
is injective for almost all divisors $D\in |L|$. 
\end{thm}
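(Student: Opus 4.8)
The plan is to deduce Theorem \ref{f-thm2.4} directly from Koll\'ar's injectivity theorem (Theorem \ref{f-thm2.1}). First I would record that, for any fixed $D\in|L|$ with defining section $s_D\in H^0(X,\mathcal O_X(D))$, the short exact sequence
$$
0\to \mathcal O_X(K_X+D)\xrightarrow{\,\cdot s_D\,}\mathcal O_X(K_X+2D)\to \mathcal O_D\bigl((K_X+2D)|_D\bigr)\to 0
$$
yields, via its long exact cohomology sequence, that the restriction map $H^0(X,\mathcal O_X(K_X+2D))\to H^0(D,\mathcal O_D((K_X+2D)|_D))$ is surjective \emph{if and only if} the map $H^1(X,\mathcal O_X(K_X+D))\to H^1(X,\mathcal O_X(K_X+2D))$ induced by $\cdot\, s_D$ is injective. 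This equivalence is purely homological and imposes no condition on the singularities of $D$; it already accounts for the ``Equivalently'' in the statement.

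Second, since $|L|$ is base point free it is in particular semiample, so Theorem \ref{f-thm2.1} applies with $m=1$, $k=1$, $l=1$, $i=1$: it tells us exactly that $H^1(X,\mathcal O_X(K_X+D))\to H^1(X,\mathcal O_X(K_X+2D))$ is injective, where one uses the linear equivalences $K_X+D\sim K_X+L$ and $K_X+2D\sim K_X+2L$ to match notation. Feeding this into the first step gives the desired surjectivity --- and in fact for \emph{every} $D\in|L|$, which is stronger than the asserted ``for almost all divisors $D$''.

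I do not anticipate a genuine obstacle on this path: all the difficulty has already been absorbed into Theorem \ref{f-thm2.1}. I would only remark that the hypotheses $\dim X\geq 2$, $\dim Y\geq 2$ and the morphism $\Phi_{|L|}$ play no role in this argument; they belong to Tankeev's original, Hodge-theoretic proof. Reconstructing that proof without Theorem \ref{f-thm2.1} is where the real work would lie: one would take $D$ general (hence smooth, by Bertini, as $|L|$ is base point free), use adjunction to write $(K_X+2D)|_D=K_D+D|_D$, and deduce the surjectivity from a Kodaira-type vanishing together with the $E_1$-degeneration of a Hodge-to-de Rham spectral sequence on $X$ and on $D$ --- which amounts to reproving a special case of Koll\'ar's theorem.
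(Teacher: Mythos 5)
Your proposal is correct and is exactly the route the paper takes: the paper gives no separate proof of Theorem \ref{f-thm2.4} but simply observes that Theorem \ref{f-thm2.1} generalizes it, and your two steps (the long exact sequence of $0\to \mathcal O_X(K_X+D)\to \mathcal O_X(K_X+2D)\to \mathcal O_D((K_X+2D)|_D)\to 0$ to establish the equivalence, then Theorem \ref{f-thm2.1} with $k=l=1$ for the injectivity) are precisely the content of that observation. Your remarks that the conclusion holds for every $D\in|L|$ and that the hypotheses $\dim X\geq 2$, $\dim Y\geq 2$ are only needed for Tankeev's original argument are also accurate.
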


By Theorem \ref{f-thm2.1}, 
we can prove: 

\begin{thm}[{\cite[Theorem 2.1]{kollar-higher1}}]\label{f-thm2.5}
Let $X$ be a smooth projective variety, let $Y$ be 
an arbitrary projective variety, and let $f:X\to Y$ be a surjective 
morphism. 
Then we have the following properties. 
\begin{itemize}
\item[(i)] $R^if_*\mathcal O_X(K_X)$ is torsion-free for every $i$. 
\item[(ii)] Let $H$ be an ample Cartier divisor on $Y$, 
then $$
H^j(Y, \mathcal O_Y(H)\otimes R^if_*\mathcal O_X(K_X))=0
$$ 
for every $j>0$ and every $i$. 
\end{itemize}
\end{thm}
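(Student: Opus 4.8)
**Proof proposal for Theorem \ref{f-thm2.5} (Kollár's torsion-freeness and vanishing for $R^if_*\omega_X$).**

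The plan is to reduce both statements to Theorem \ref{f-thm2.1} by a now-standard trick: spread out the cohomology of a sufficiently positive twist over $Y$ and use the injectivity to kill would-be torsion and higher cohomology. First I would fix an ample Cartier divisor $H$ on $Y$ and observe that, by Serre vanishing on $Y$ and on $X$, for $m \gg 0$ one has $R^jf_*$ of everything relevant vanishing for $j>0$ in the range needed, so that $H^j(Y, \mathcal O_Y(mH) \otimes R^if_*\mathcal O_X(K_X))$ agrees with $H^j(X, \mathcal O_X(K_X + f^*mH))$ via the Leray spectral sequence, and the latter vanishes for $j>0$ by Theorem \ref{f-thm2.3} applied on $X$ (after perturbing $f^*H$ to a genuinely ample class, or directly by Kodaira-type vanishing since $f^*H$ is semiample and big). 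This sets up the inductive scaffolding.

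For part (i), torsion-freeness, I would argue by contradiction: suppose $\tau \subset R^if_*\mathcal O_X(K_X)$ is a nonzero torsion subsheaf, supported on a proper closed subset $Z \subsetneq Y$. Choose a general very ample divisor $A$ on $Y$ passing through a general point of $\Supp\tau$ but not containing any associated point of $R^if_*\mathcal O_X(K_X)/\tau$; then for $m\gg 0$ the map $H^0(Y, \mathcal O_Y(mH)\otimes R^if_*\mathcal O_X(K_X)) \to H^0(Y, \mathcal O_Y(mH+A)\otimes R^if_*\mathcal O_X(K_X))$ fails to be injective, because a section of $\tau$ twisted up dies when we further twist across $A$. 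On the other hand, translating through Leray and identifying these groups with $H^i(X, \mathcal O_X(K_X+f^*(mH)))$ and $H^i(X,\mathcal O_X(K_X+f^*(mH)+f^*A))$ respectively, the map is exactly the one appearing in Theorem \ref{f-thm2.1} with $L = f^*(mH+A)$ semiample and $D$ a member of $|f^*A|$ (or of $|k L|$ after adjusting), hence injective — a contradiction. Care is needed to arrange that $f^*A$ is an honest effective divisor in the right linear system and that all higher $R^jf_*$ nuisance terms vanish; that bookkeeping is the technical, but routine, part.

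For part (ii), the vanishing $H^j(Y, \mathcal O_Y(H)\otimes R^if_*\mathcal O_X(K_X)) = 0$ for $j>0$, I would again use Theorem \ref{f-thm2.1}: for $m \gg 0$ the twist $\mathcal O_Y(H + mH) = \mathcal O_Y((m{+}1)H)$ has vanishing higher cohomology against $R^if_*\omega_X$ by the Serre-vanishing-plus-Leray argument above. Then the inclusion $\mathcal O_Y(H) \hookrightarrow \mathcal O_Y((m{+}1)H)$ induces, after tensoring with $R^if_*\omega_X$ and taking cohomology, a map $H^j(Y,\mathcal O_Y(H)\otimes R^if_*\omega_X) \to H^j(Y, \mathcal O_Y((m{+}1)H)\otimes R^if_*\omega_X) = 0$ which I claim is injective; pulling back to $X$ this becomes the injectivity map $H^{?}(X, \mathcal O_X(K_X + f^*H)) \to H^{?}(X, \mathcal O_X(K_X + f^*((m{+}1)H)))$ of Theorem \ref{f-thm2.1} after one more application of Leray to convert $H^j(Y, -\otimes R^if_*)$ into a single cohomology group on $X$ — this last conversion requires the degeneration of the relevant Leray spectral sequence, which in the $\omega_X$ case is exactly what part (i) and Theorem \ref{f-thm2.1} deliver (the decomposition $Rf_*\omega_X \simeq \bigoplus_i R^if_*\omega_X[-i]$). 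The main obstacle is precisely this Leray degeneration / splitting: one must either prove it first (again via the injectivity theorem, by an argument of Kollár) or phrase the induction so that one only ever needs the Kodaira vanishing on $X$ together with the raw injectivity statement. I would structure the write-up to establish the splitting of $Rf_*\omega_X$ as an intermediate lemma, deduce (i) from it by the torsion argument, and then get (ii) for free from the splitting plus Theorem \ref{f-thm2.3} on $X$.
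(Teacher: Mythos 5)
The paper itself offers no proof of Theorem \ref{f-thm2.5}: it only records that Theorems \ref{f-thm2.1} and \ref{f-thm2.5} are equivalent and refers to \cite[Chapter 9]{kollar-book} and \cite{esnault-viehweg}. Your overall route --- deduce both parts from the injectivity theorem via Serre vanishing on $Y$ and the Leray spectral sequence --- is exactly the one the paper gestures at. For part (i) your argument is essentially the standard one, but the choice of $A$ is wrong as stated: if $A$ is a general very ample divisor merely passing through a general closed point of $\Supp\tau$, its equation $s_A$ is a nonzerodivisor on $\tau$, multiplication by $s_A$ is injective on $H^0(Y,\mathcal O_Y(mH)\otimes R^if_*\omega_X)$, and no contradiction arises. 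You must instead choose $A$ containing an associated point of the torsion subsheaf (e.g.\ a whole component of $\Supp\tau$); then the kernel $\mathcal K$ of $R^if_*\omega_X\to R^if_*\omega_X\otimes\mathcal O_Y(A)$ is nonzero and $H^0(Y,\mathcal K(mH))\neq 0$ for $m\gg 0$ gives the failure of injectivity. Taking $H=A$ also disposes of the linear-system bookkeeping, since then $D=f^*A\in |L|$ with $L=f^*A$ semiample. A separate slip in your first paragraph: $H^j(X,\mathcal O_X(K_X+mf^*H))$ does \emph{not} vanish for $j>0$ in general, because $f^*H$ is semiample but not big when $\dim X>\dim Y$; fortunately you only ever need Serre vanishing on $Y$, not Kodaira-type vanishing on $X$.

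The genuine gap is in part (ii). Your argument rests on the splitting $Rf_*\omega_X\simeq\bigoplus_i R^if_*\omega_X[-i]$, which you propose to establish first, ``again via the injectivity theorem.'' That splitting is Koll\'ar's decomposition theorem from \emph{Higher direct images of dualizing sheaves}~II, and it is not a soft consequence of Theorem \ref{f-thm2.1}: its known proofs use the vanishing statement (ii) itself (together with torsion-freeness and an induction on hyperplane sections), or else Saito's theory of Hodge modules. So the logical order you propose --- splitting first, then (ii) --- is circular as described. (Also, getting (ii) ``for free from the splitting plus Theorem \ref{f-thm2.3}'' cannot be right, since $H^k(X,\omega_X\otimes f^*H)$ need not vanish; what is needed, and what you correctly describe earlier in the same paragraph, is the splitting plus one more application of Theorem \ref{f-thm2.1} to inject into a high twist where only the $j=0$ summand survives.) The non-circular deduction of (ii) from Theorem \ref{f-thm2.1}, as in \cite{esnault-viehweg} and \cite[Chapter 9]{kollar-book}, avoids the splitting entirely: one inducts on $\dim Y$, takes a general $B\in|mH|$ with $m\gg 0$ so that $f^{-1}(B)$ is smooth, uses part (i) to break the long exact sequence of $R^if_*$ applied to the adjunction sequence for $f^{-1}(B)\subset X$ into short exact sequences of sheaves on $Y$, applies Serre vanishing to the middle term and the induction hypothesis on $B$ for $j\geq 2$, and uses one further application of the injectivity theorem to handle $j=1$. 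You would need to supply this argument (or an actual, independent proof of the decomposition theorem) for the proof to be complete.
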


Theorem \ref{f-thm2.5} (i) and (ii) are called 
Koll\'ar's torsion-freeness and the Koll\'ar vanishing theorem respectively. 
We give a small remark on Theorem \ref{f-thm2.5}. 

\begin{rem}\label{f-rem2.6}
If $f=id _X: X\to X$ in Theorem \ref{f-thm2.5} (ii), 
then we have 
$H^i(X, \mathcal O_X(K_X+H))=0$ for every $i>0$ and 
every ample Cartier divisor $H$ on $X$. 
This is nothing but the Kodaira vanishing theorem for 
projective 
varieties (see Theorem \ref{f-thm2.3}). 
If $f$ is birational in Theorem \ref{f-thm2.5} (i), 
then $R^if_*\mathcal O_X(K_X)=0$ for 
every $i>0$ since 
$R^if_*\mathcal O_X(K_X)$ is a torsion sheaf for 
every $i>0$. 
This is the Grauert--Riemenschneider vanishing theorem 
for birational morphisms between projective varieties. 
\end{rem}

In \cite{kollar-higher1}, 
Koll\'ar proved Theorem \ref{f-thm2.1} and Theorem \ref{f-thm2.5} 
simultaneously. 
Therefore, 
the relationship between 
Theorem \ref{f-thm2.1} and Theorem \ref{f-thm2.5} is not clear in \cite{kollar-higher1}. 
Now it is well-known that 
Theorem \ref{f-thm2.1} and 
Theorem \ref{f-thm2.5} are equivalent by the works of 
Koll\'ar himself and Esnault--Viehweg (see, for example, 
\cite[Chapter 9]{kollar-book} and \cite{esnault-viehweg}). 
We note that Theorem \ref{f-thm2.1} follows from the $E_1$-degeneration of 
Hodge to de Rham spectral sequence. 

\begin{say}[$E_1$-degeneration of Hodge to de Rham spectral sequence]\label{f-say2.7}
Let $V$ be a smooth projective variety. 
Then the spectral sequence 
$$
E^{p, q}_1=H^q(V, \Omega^p_V)\Rightarrow 
H^{p+q}(V, \mathbb C)
$$ 
degenerates at $E_1$. 
This is a direct consequence of 
the Hodge decomposition for compact K\"ahler manifolds. 
\end{say}

Therefore, we can see that Theorem \ref{f-thm2.1} 
is a result of the theory of 
{\em{pure}} Hodge structures. 
Thus, it is natural to consider {\em{mixed}} 
generalizations of Theorem \ref{f-thm2.1}. 

We do not repeat the proof of 
Theorem \ref{f-thm2.1} depending on 
the $E_1$-degeneration of Hodge 
to de Rham spectral sequence in \ref{f-say2.7} here. 
For the details, see, for example, 
\cite[Chapter 9]{kollar-book} and \cite{esnault-viehweg}. 

\begin{say}\label{f-say2.8} 
We note Deligne's famous 
generalization of the $E_1$-degeneration in \ref{f-say2.7}. 
Let $V$  be a smooth projective variety and let $\Delta$ be a simple 
normal crossing divisor on $V$. Then 
the spectral sequence 
$$
E^{p, q}_1=H^q(V, \Omega^p_V(\log \Delta))\Rightarrow 
H^{p+q}(V\setminus \Delta, \mathbb C)
$$ 
degenerates at $E_1$ by 
Deligne's theory of mixed Hodge structures for 
smooth noncompact 
algebraic varieties (see \cite{deligne}). 
\end{say}

Unfortunately, 
the $E_1$-degeneration in \ref{f-say2.8} seems to produce no useful generalizations of 
Theorem \ref{f-thm2.1}. 
We think that 
the following $E_1$-degeneration is a correct ingredient for mixed generalizations of 
Theorem \ref{f-thm2.1}. 

\begin{say}\label{f-say2.9} 
Let $V$ and $\Delta$ be as in \ref{f-say2.8}. 
Then 
the spectral sequence 
$$
E^{p, q}_1=H^q(V, \Omega^p_V(\log \Delta)\otimes \mathcal O_V(-\Delta))\Rightarrow 
H^{p+q}_c(V\setminus \Delta, \mathbb C)
$$
degenerates at $E_1$. 
This 
is a consequence of mixed Hodge structures on cohomology with compact 
support $H^\bullet_c(V\setminus \Delta, \mathbb C)$. 
\end{say}

\begin{rem}\label{f-rem2.10}
In \ref{f-say2.9}, we see that 
$H^q(V, \Omega^p_V(\log \Delta)\otimes \mathcal O_V(-\Delta))$ 
is dual to $H^{n-q}(V, \Omega^{n-p}_V(\log \Delta))$ by Serre duality, 
where $n=\dim V$. 
Moreover, 
$H^{p+q}_c(V\setminus \Delta, 
\mathbb C)$ is dual to 
$H^{2n-(p+q)}(V\setminus \Delta, \mathbb C)$ by Poincar\'e 
duality. 
Therefore, we can check the $E_1$-degeneration in \ref{f-say2.9} 
by the $E_1$-degeneration in \ref{f-say2.8}. 
However, it is better to discuss mixed Hodge 
structures on cohomology with compact 
support in order to 
treat more general situations below (see Theorem \ref{f-thm2.12}, 
Theorem \ref{f-thm2.13}, Theorem \ref{f-thm3.6}, and so on). 
\end{rem}

We give a proof of the Kodaira vanishing theorem for 
projective varieties by using the 
$E_1$-degeneration in \ref{f-say2.9} in order to 
get the reader to grow more comfortable with 
the $E_1$-degeneration in \ref{f-say2.9}. 

\begin{proof}[Proof of Theorem \ref{f-thm2.3}] 
By the standard covering trick (see, 
for example, Step 1 in the proof of \cite[Theorem 2.2]{kollar-higher1}), 
we can reduce Theorem \ref{f-thm2.3} 
to the case when the complete linear system $|L|$ has no base points. 
So, we assume that $|L|$ has no base points for simplicity. 
We take a smooth member $D$ of $|L|$ by Bertini's theorem. 
We put $\iota: X\setminus D\hookrightarrow X$. 
By the $E_1$-degeneration of 
$$
E^{p, q}_1=H^q(X, \Omega^p_X(\log D)\otimes \mathcal O_X(-D))
\Rightarrow H^{p+q}_c(X\setminus D, \mathbb C),  
$$ 
we obtain that the natural map 
$$
\pi: H^j(X, \iota_!\mathbb C_{X\setminus D})
\to H^j(X, \mathcal O_X(-D))
$$ 
induced by $\iota_!\mathbb C_{X\setminus D}\subset 
\mathcal O_X(-D)$ is surjective 
for every $j$. 
Since $$\iota_!\mathbb C_{X\setminus D}\subset \mathcal O_X(-mD)\subset 
\mathcal O_X(-D)$$ for every $m\geq 1$, 
we obtain that 
$$
\pi:H^j(X, \iota_! \mathbb C_{X\setminus D})\to 
H^j(X, \mathcal O_X(-mD))\overset{p}\to 
H^j(X, \mathcal O_X(-D))
$$ and 
that $p$ is surjective for every $j$. 
Note that $H^j(X, \mathcal O_X(-mD))=0$ for 
$j<\dim X$ and for 
$m\gg 0$ by the Serre vanishing theorem. 
Thus we obtain that 
$H^j(X, \mathcal O_X(-D))=0$ for 
$j<\dim X$. 
By Serre duality, 
we have 
$H^i(X, \mathcal O_X(K_X+D))=0$ for 
every $i>0$. 
\end{proof}

We next give a remark on \cite{esnault-viehweg}. 

\begin{rem}\label{f-rem2.11}Let $V$ be a smooth 
projective variety and let $A+B$ be a simple normal crossing divisor 
on $V$ such that $A$ and $B$ have no common irreducible components. 
In \cite{esnault-viehweg}, 
Esnault--Viehweg discussed the $E_1$-degeneration 
of 
\begin{align*}
E^{p, q}_1&=H^q(V, \Omega^p_V(\log (A+B))\otimes 
\mathcal O_V(-B)) \\ &\Rightarrow 
\mathbb H^{p+q}(V, \Omega^\bullet _V(\log (A+B))\otimes 
\mathcal O_V(-B)) 
\end{align*} (see also \cite{deligne-illusie}). 
This $E_1$-degeneration contains 
the $E_1$-degenerations in \ref{f-say2.8} and 
in \ref{f-say2.9} as special cases. 
However, 
they did not pursue geometric applications of the $E_1$-degeneration 
in \ref{f-say2.9}, that is, in the case when $A=0$. 
\end{rem}

By using the $E_1$-degeneration in \ref{f-say2.9} and 
some more general $E_1$-degenerations 
arising from mixed Hodge structures on cohomology with 
compact support, we can obtain various generalizations of 
Theorem \ref{f-thm2.1} and Theorem \ref{f-thm2.5}. 
We write the following useful generalizations 
without explaining the precise definitions and 
the notation here (see \ref{f-say6.6}, 
\ref{f-say6.7}, \ref{f-say6.8}, \ref{f-say6.9} in Section \ref{f-sec6}). 

\begin{thm}[Injectivity theorem for simple normal crossing pairs]\label{f-thm2.12} 
Let $(X, \Delta)$ be a simple normal crossing 
pair such that $\Delta$ is an 
$\mathbb R$-divisor on $X$ whose coefficients are in 
$[0, 1]$, and let $\pi:X\to V$ be a proper 
morphism between schemes. 
Let $L$ be a Cartier 
divisor on $X$ and let $D$ be an 
effective Cartier divisor that is permissible 
with 
respect to $(X, \Delta)$. 
Assume the following conditions. 
\begin{itemize}
\item[(i)] $L\sim _{\mathbb R, \pi}K_X+\Delta+H$, 
\item[(ii)] $H$ is a $\pi$-semiample 
$\mathbb R$-divisor, and 
\item[(iii)] $tH\sim _{\mathbb R, \pi} D+D'$ for some 
positive real number $t$, where 
$D'$ is an effective $\mathbb R$-Cartier 
$\mathbb R$-divisor that is permissible with respect to $(X, \Delta)$. 
\end{itemize}
Then the homomorphisms 
$$
R^q\pi_*\mathcal O_X(L)\to R^q\pi_*\mathcal O_X(L+D), 
$$ 
which are induced by the natural inclusion 
$\mathcal O_X\hookrightarrow \mathcal O_X(D)$, 
are injective for all $q$. 
\end{thm}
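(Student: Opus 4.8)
The plan is to reduce Theorem \ref{f-thm2.12} to an $E_1$-degeneration statement of the type discussed in \ref{f-say2.9}, following the strategy that Koll\'ar used to deduce Theorem \ref{f-thm2.1}, but now carried out in the setting of simple normal crossing pairs over a base $V$. First I would reduce to the case where $V$ is affine (since the assertion is local on $V$ and $R^q\pi_*$ commutes with restriction to affine opens), so that it suffices to prove that the maps on cohomology groups $H^q(X, \mathcal O_X(L)) \to H^q(X, \mathcal O_X(L+D))$ are injective. By dualizing and using a suitable compactification, the injectivity of $\mathcal O_X(L) \to \mathcal O_X(L+D)$ on cohomology is equivalent to the surjectivity of the restriction-type maps going the other way, which is exactly the kind of statement that follows from strictness of a morphism of mixed Hodge structures.

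The key technical input is the $E_1$-degeneration for the Hodge-to-de Rham type complex $\Omega^\bullet_X(\log S) \otimes \mathcal O_X(-S - (\text{round-up of } \Delta\text{-type correction}))$ on an embedded simple normal crossing variety, which produces surjectivity of the map $H^j_c(\text{open locus}, \text{constructible sheaf}) \to H^j(X, \mathcal O_X(\text{appropriate twist}))$. Concretely I would first take the $\mathbb R$-linear equivalence in (i) and use (ii), (iii) together with the usual Bloch--Gieseker / covering and blow-up trick to replace $H$ by an honest effective Cartier divisor, arrange that $D + D'$ plus the fractional part of $\Delta$ sits in normal crossing position with $\Delta$ and with the relevant exceptional loci, and perform a cyclic covering to absorb the fractional coefficients; this reduces everything to a situation where the twisting sheaf is a genuine line bundle of the form $\mathcal O_X(K_X + (\text{reduced boundary}))$ with an effective divisor subtracted. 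Then the inclusion $\iota_! \mathbb C \hookrightarrow \mathcal O_X(-D) \hookrightarrow \mathcal O_X$ factored through $\mathcal O_X(-mD)$ for all $m$, exactly as in the proof of Theorem \ref{f-thm2.3} given above, yields that the composite map factors through the cohomology of a sheaf made arbitrarily $\pi$-negative, and $\pi$-semiampleness of $H$ (hence of $D + D'$) lets one kill that cohomology after pushforward by relative Serre vanishing; unwinding gives the desired injectivity.

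I would organize the argument in steps: (1) reduction to $V$ affine and to a cohomology-group statement; (2) the covering and birational modification step that clears denominators and puts all divisors in simple normal crossing position, invoking the formalism of \ref{f-say6.6}--\ref{f-say6.9}; (3) invoking the relevant $E_1$-degeneration / strictness result to get surjectivity of the dual maps; (4) the factorization trick with $\mathcal O_X(-mD)$ and relative Serre vanishing to conclude. Throughout, one must be careful that all auxiliary divisors introduced remain permissible with respect to $(X, \Delta)$ so that the Hodge-theoretic machinery on the simple normal crossing pair still applies, and that the $\mathbb R$-coefficients are handled by approximation and the fact that $\pi$-semiampleness is an open/convex condition.

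The hard part will be step (2)--(3): making the mixed Hodge theory work on a reducible simple normal crossing pair $(X, \Delta)$ rather than on a smooth variety. On a smooth $V$ with a smooth divisor $D$ the degeneration in \ref{f-say2.9} is classical, but here $X$ may be reducible and $\Delta$ may have coefficients strictly between $0$ and $1$, so one needs the full strength of the theory of mixed Hodge structures on the cohomology with compact support of the open part of an embedded simple normal crossing pair, together with the compatibility of the resulting spectral sequence with the covering constructions. This is precisely where the machinery developed in \cite{fujino-vanishing}, \cite{fujino-injectivity}, and \cite{fujino-foundation} is indispensable, and I would cite those results for the degeneration statement rather than reprove it; granting that degeneration, the remaining bookkeeping with divisors, coverings, and relative vanishing is routine though somewhat lengthy.
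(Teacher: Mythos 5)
The paper does not actually prove Theorem \ref{f-thm2.12}; it only indicates the strategy (the $E_1$-degeneration in \ref{f-say2.9} and its generalizations to simple normal crossing pairs) and refers to \cite{fujino-fundamental}, \cite{fujino-injectivity}, and \cite{fujino-foundation} for details. Measured against that strategy, your skeleton --- clear denominators, cyclic covers to absorb the fractional part of $\Delta$, and the $E_1$-degeneration for cohomology with compact support on an embedded simple normal crossing pair --- is the right one. But your step (4) contains a genuine error. You propose to factor through $\mathcal O_X(-mD)$ and then ``kill that cohomology after pushforward by relative Serre vanishing.'' Serre vanishing requires $\pi$-ampleness, whereas $H$ (hence $D+D'$) is only $\pi$-semiample, so the cohomology of $\mathcal O_X(-mD)$ does not become arbitrarily negative and cannot be killed. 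Worse, if that step did go through it would prove a Kodaira-type \emph{vanishing} of $R^q\pi_*\mathcal O_X(L)$, which is strictly stronger than the claimed injectivity and is false for merely semiample $H$ (e.g.\ $L=K_X+$ the pullback of an ample divisor under a fibration). The correct endgame is to stop one step earlier than in the proof of Theorem \ref{f-thm2.3}: the $E_1$-degeneration makes the composite $H^j(\iota_!\mathbb C)\to H^j(\mathcal O(-S-D))\to H^j(\mathcal O(-S))$ surjective, hence the second arrow is surjective, and dualizing that surjectivity alone is exactly the asserted injectivity. No vanishing input is needed or available.

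A second, smaller gap is your step (1). Restricting to affine opens of $V$ does reduce sheaf injectivity to injectivity on cohomology groups, but it destroys compactness of $X$, and the Hodge-theoretic degeneration you invoke in step (3) requires a compact (projective) total space. The standard reduction goes the other way: compactify $V$ to a projective variety, extend the data, and recover the sheaf-level statement from the absolute injectivity applied to $L+\pi^*A$ for all sufficiently ample $A$ on the compactified base (using Leray and the fact that $H+\pi^*A$ is again semiample). You gesture at ``a suitable compactification,'' but as ordered your reduction would leave you trying to run mixed Hodge theory on a non-compact $X$. With these two steps repaired, and granting the degeneration results you rightly propose to cite rather than reprove, the outline matches the argument in the cited references.
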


Theorem \ref{f-thm2.12} is a generalization of 
Theorem \ref{f-thm2.1}. 

\begin{thm}\label{f-thm2.13} 
Let $f:(Y, \Delta)\to X$ be a proper morphism from an embedded simple 
normal crossing pair $(Y, \Delta)$ to 
a scheme $X$ such that 
$\Delta$ is an $\mathbb R$-divisor whose coefficients are in $[0, 1]$. 
Let $L$ be a Cartier divisor on $Y$ and let $q$ be an arbitrary nonnegative 
integer. Then we have the following properties. 
\begin{itemize}
\item[(i)] 
Assume that $L-(K_Y+\Delta)$ is $f$-semi-ample. Then  
every associated prime of 
$R^qf_*\mathcal O_Y(L)$ is the generic point 
of the $f$-image of 
some stratum of $(Y, \Delta)$. 
\item[(ii)] Let $\pi:X\to V$ 
be a proper morphism between schemes. 
Assume that $$f^*H\sim _{\mathbb R}L-(K_Y+\Delta), $$ 
where $H$ is nef and log big over $V$ with 
respect to $f:(Y, \Delta)\to X$. 
Then we have $$R^p\pi_*R^qf_*\mathcal O_Y(L)=0$$ for every $p>0$. 
\end{itemize}
\end{thm}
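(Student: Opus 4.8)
The plan is to derive both parts from the injectivity theorem (Theorem~\ref{f-thm2.12}): part~(i) first, and then part~(ii) by bootstrapping from it. When $\Delta=0$ and $Y$ is smooth these reduce to Koll\'ar's torsion-freeness and the Koll\'ar vanishing theorem of Theorem~\ref{f-thm2.5}, and the argument runs parallel to those, with the simple normal crossing structure replacing smoothness. Throughout I would handle the $\mathbb R$-coefficients by the usual perturbation to the $\mathbb Q$-linear case and suppress that point below.

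For~(i): the assertion is local on $X$, so after compactifying and passing to a log resolution inside the embedded simple normal crossing category (keeping $f$ proper and keeping track of all strata) I may assume $X$ is projective and that $L-(K_Y+\Delta)=f^*G$ for a semiample $\mathbb Q$-divisor $G$ on $X$. I argue by contradiction: suppose $P\in\operatorname{Ass}(\mathcal F)$ with $\mathcal F:=R^qf_*\mathcal O_Y(L)$ is not the generic point of the $f$-image of any stratum, and set $Z=\overline{\{P\}}$. Cutting $X$ by $\dim Z$ suitably general very ample divisors and pulling them back to $Y$ --- choosing them, by a Bertini-type argument, so that the restricted pair is again an embedded simple normal crossing pair whose strata are the restrictions of those of $(Y,\Delta)$ and so that the $f$-semiampleness persists --- reduces the situation to the case in which $Z=\{x\}$ is a closed point, still associated to the restriction of $\mathcal F$ and still not an $f$-image of a stratum (associated primes and images of strata restrict compatibly under general hyperplane sections). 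Now for $m\gg0$ I take a general member $D$ of $|mf^*G|$ through $x$; it is permissible with respect to the pair and of the shape required by Theorem~\ref{f-thm2.12} (taking for $D'$ a general complementary divisor, or $0$), so that theorem yields an injection $R^qf_*\mathcal O_Y(L)\hookrightarrow R^qf_*\mathcal O_Y(L+D)$ induced by $\mathcal O_Y\hookrightarrow\mathcal O_Y(D)$. But a nonzero germ of $\mathcal F$ at $x$ killed by the maximal ideal is killed by the local equation of $D$, contradicting this injection; hence no such $P$ exists.

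For~(ii): I would first establish the case in which $H$ is $\pi$-ample, where we may assume $\pi$ is projective. This I would prove by induction on $\dim Y$: for $m\gg0$ take a general $A\in|mH|$, put $D=f^*A$ (chosen permissible), and push the exact sequence $0\to\mathcal O_Y(L)\to\mathcal O_Y(L+D)\to\mathcal O_D(L+D)\to0$ forward by $f$; Theorem~\ref{f-thm2.12} makes $R^qf_*\mathcal O_Y(L)\to R^qf_*\mathcal O_Y(L+D)$ injective, so the long exact sequence breaks into short exact sequences of direct image sheaves, and then applying $R^\bullet\pi_*$, using the inductive hypothesis on the pair $(D,\Delta|_D)$ mapping to the general member $A$ (with part~(i) supplying the torsion-freeness needed to make the restriction behave), together with relative Serre vanishing for $m\gg0$, gives $R^p\pi_*R^qf_*\mathcal O_Y(L)=0$ for $p>0$. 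Finally I would reduce the general \emph{nef and log big over $V$} case to the $\pi$-ample case: since $H$ restricts to a big divisor over $V$ on every stratum of $(Y,\Delta)$, Kodaira's lemma applied on the finitely many strata lets me write, after an $\mathbb R$-linear perturbation, $f^*H\sim_{\mathbb R}A'+E$ with $A'$ the pullback of a $\pi$-ample divisor and $E\geq0$ permissible whose support contains no stratum of $(Y,\Delta)$; replacing $\Delta$ by $\Delta+\varepsilon E$ for small $\varepsilon>0$ keeps the coefficients in $[0,1]$ and does not change the strata, and the $\pi$-ample case applied to this new pair, together with part~(i) to see that the relevant associated primes are unchanged, gives the asserted vanishing.

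The main obstacle is not a single hard estimate but the geometric bookkeeping inside these reductions. In~(i), one must choose the compactification, the general hyperplane sections, and the auxiliary divisor $D$ so that the embedded simple normal crossing structure, the complete list of strata, and the property ``$P$ is not an $f$-image of a stratum'' are \emph{simultaneously} preserved, which requires care with Bertini-type statements in the simple normal crossing category and with the behaviour of $\operatorname{Ass}$ under base change. In~(ii), one must carry out the Kodaira's-lemma perturbation uniformly over all strata at once while keeping $E$ and $D$ permissible and the boundary coefficients in $[0,1]$. Once these reductions are set up, the cohomological substance is entirely carried by Theorem~\ref{f-thm2.12} and a routine induction.
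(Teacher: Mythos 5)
This survey does not actually prove Theorem \ref{f-thm2.13}; it states the result and defers to \cite{fujino-fundamental}, \cite{fujino-vanishing}, \cite{fujino-injectivity}, and \cite{fujino-foundation}, so I am comparing your sketch with the proofs given there. Your architecture is the right one and is theirs: deduce (i), then (ii), from the injectivity theorem (Theorem \ref{f-thm2.12}), in parallel with the classical derivation of Theorem \ref{f-thm2.5} from Theorem \ref{f-thm2.1}. Two of your key steps, however, would fail as written. In (i), after cutting down to a closed associated point $x$ of $R^qf_*\mathcal O_Y(L)$, you take $D$ to be a general member of $|mf^*G|$ through $x$, where $L-(K_Y+\Delta)=f^*G$ with $G$ semiample. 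An $f$-semiample divisor is not in general the pullback of a semiample divisor, so this normal form is not available; and even when it is, $|mG|$ may have no member through $x$ or may fail to separate $x$ from the $f$-images of strata --- in the basic case $L=K_Y+\Delta$ one has $G=0$ and $|mG|=\{0\}$, and the argument produces nothing. The missing move is to twist $L$ by $f^*A$ for a sufficiently ample Cartier divisor $A$ on $X$ (this tensors $R^qf_*\mathcal O_Y(L)$ by a line bundle, so changes neither its associated primes nor the hypotheses) and to take $D=f^*D_X$ with $D_X$ a general member of the \emph{ample} system $|mA|$ through $x$; the $f$-semiampleness of $L-(K_Y+\Delta)$ is used only to manufacture the complementary permissible divisor $D'$ demanded by condition (iii) of Theorem \ref{f-thm2.12}.

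In (ii), the reduction of ``nef and log big over $V$'' to ``$\pi$-ample'' via a single decomposition $f^*H\sim_{\mathbb R}A'+E$ with $A'$ pulled back from a $\pi$-ample divisor and $E\geq 0$ \emph{permissible} cannot work: Kodaira's lemma gives no control over which subvarieties $\Supp E$ meets, and in general $E$ must contain $f$-images of lower-dimensional strata. This is exactly why the hypothesis is ``log big'' (big on every stratum image) rather than merely ``big on the image''; if $E$ could always be chosen to miss all strata, nef and big would suffice, which is known to fail. The cited proofs instead induct on the number of strata, peeling off the strata contained in $\Supp E$ by adjunction-type exact sequences and using part (i) to control associated primes at each stage. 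Separately, in your induction for the $\pi$-ample case, the long exact sequence yields $R^p\pi_*R^qf_*\mathcal O_Y(L)=0$ only for $p\geq 2$; for $p=1$ it identifies $R^1\pi_*R^qf_*\mathcal O_Y(L)$ with the cokernel of $\pi_*R^qf_*\mathcal O_Y(L+D)\to\pi_*R^qf_*\mathcal O_D((L+D)|_D)$, whose vanishing is supplied neither by the inductive hypothesis nor by Serre vanishing. Closing this requires comparing the two Leray spectral sequences over $V$ and invoking the injectivity of $R^k(\pi\circ f)_*\mathcal O_Y(L)\to R^k(\pi\circ f)_*\mathcal O_Y(L+D)$ for the composite morphism, which is an additional application of Theorem \ref{f-thm2.12} that your sketch does not make.
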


Theorem \ref{f-thm2.13} (i) and (ii) are generalizations of Theorem \ref{f-thm2.5} 
(i) and (ii) respectively. 
For the details, see, for example, \cite[Sections 5 and 
6]{fujino-fundamental}, \cite[Theorem 1.1]
{fujino-vanishing}, \cite[Theorem 1.1]{fujino-injectivity}, 
and \cite[Chapter 5]{fujino-foundation}. 
Note that Theorem \ref{f-thm2.12} and Theorem \ref{f-thm2.13} 
have already played crucial roles 
in the proof of the fundamental theorems for 
log canonical pairs and semi-log canonical pairs 
(see, for example, \cite{fujino-fundamental}, 
\cite{fujino-slc}, and \cite{fujino-foundation}). 

Anyway, the formulation of Theorem \ref{f-thm2.12} and 
Theorem \ref{f-thm2.13} is natural and 
useful from the minimal model theoretic viewpoint, although 
it may look unduly technical and artificial. 

\begin{rem}\label{f-rem2.14}
Let $V$ and $\Delta$ be as in \ref{f-say2.8}. 
In the traditional framework, 
$\mathcal O_V(K_V+\Delta)$ was recognized to 
be $\det \Omega^1_V(\log \Delta)$. 
On the other hand, 
in our new framework for vanishing theorems, 
we see $\mathcal O_V(K_V+\Delta)$ as $$\mathcal Hom _{\mathcal O_V}
(\mathcal O_V(-\Delta), \mathcal O_V(K_V))$$ and $\mathcal O_V(-\Delta)$ as the 
$0$th term of $\Omega^\bullet _V(\log \Delta)\otimes 
\mathcal O_V(-\Delta)$. 
\end{rem}

I think that it is not so easy to 
understand the statements of Theorem \ref{f-thm2.12} and Theorem \ref{f-thm2.13}. 
So we give a very special 
case of Theorem \ref{f-thm2.12} to 
clarify the main difference between Theorem \ref{f-thm2.1} and Theorem \ref{f-thm2.12}. 

\begin{thm}\label{f-thm2.15} 
Let $X$ be a smooth projective variety and 
let $\Delta$ be a simple 
normal crossing divisor on $X$. 
Let $L$ be a semiample  
Cartier divisor on $X$ and let 
$D$ be a member of $|kL|$ for some positive integer 
$k$ such that 
$D$ contains no strata of $\Delta$. 
Then the homomorphism 
$$
H^i(X, \mathcal O_X(K_X+\Delta+lL))\to 
H^i(X, \mathcal O_X(K_X+\Delta+(l+k)L))
$$ 
induced by the natural inclusion 
$\mathcal O_X\hookrightarrow \mathcal O_X(D)$ is injective for 
every positive integer $l$ and every $i$. 
\end{thm}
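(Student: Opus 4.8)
The plan is to deduce Theorem \ref{f-thm2.15} directly from Theorem \ref{f-thm2.12} by specializing the data there appropriately; the text already advertises Theorem \ref{f-thm2.15} as ``a very special case'' of Theorem \ref{f-thm2.12}, and indeed no new Hodge-theoretic input is needed. First I would take $V:=\operatorname{Spec}\mathbb C$ and let $\pi\colon X\to V$ be the structure morphism, so that $R^q\pi_*$ is just $H^q(X,-)$. Since $X$ is smooth and $\Delta$ is a (reduced) simple normal crossing divisor, $(X,\Delta)$ is a simple normal crossing pair and $K_X+\Delta$ is a Cartier divisor; I would then apply Theorem \ref{f-thm2.12} with the Cartier divisor called $L$ there replaced by $K_X+\Delta+lL$, with $H:=lL$, with $D$ the given effective Cartier divisor, and with $D':=0$.

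Next I would verify the three hypotheses of Theorem \ref{f-thm2.12}. Condition (i) is immediate: $K_X+\Delta+lL=K_X+\Delta+H$, so in particular $K_X+\Delta+lL\sim_{\mathbb R,\pi}K_X+\Delta+H$. Condition (ii) holds because $H=lL$ is semiample: if $|mL|$ is base point free then so is $|mlL|$, hence $lL$ is semiample, and over a point this is the same as $\pi$-semiample. For condition (iii), $D\in|kL|$ gives $D\sim kL$, so taking the positive real number $t:=k/l$ we get $tH=tlL=kL\sim D$, i.e. $tH\sim_{\mathbb R,\pi}D+D'$ with $D'=0$, which is trivially an effective $\mathbb R$-Cartier $\mathbb R$-divisor permissible with respect to $(X,\Delta)$. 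Finally, $D$ itself is permissible with respect to $(X,\Delta)$: it is an effective Cartier divisor and, by hypothesis, its support contains no stratum of $\Delta$, which is exactly the permissibility condition (cf. the definitions collected in Section \ref{f-sec6}).

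With the hypotheses checked, Theorem \ref{f-thm2.12} gives that the map $R^q\pi_*\mathcal O_X(K_X+\Delta+lL)\to R^q\pi_*\mathcal O_X(K_X+\Delta+lL+D)$ induced by $\mathcal O_X\hookrightarrow\mathcal O_X(D)$ is injective for every $q$. Since $\pi$ maps to a point these are the cohomology groups $H^q(X,-)$, and choosing a section $s\in H^0(X,\mathcal O_X(D))$ with zero divisor $D$ identifies $\mathcal O_X(D)\simeq\mathcal O_X(kL)$, hence $\mathcal O_X(K_X+\Delta+lL+D)\simeq\mathcal O_X(K_X+\Delta+(l+k)L)$, and under this identification the above homomorphism is precisely the one in the statement. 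I do not expect any genuine obstacle in carrying this out; the only points requiring a little care are matching the abstract notion of a divisor ``permissible with respect to $(X,\Delta)$'' with the concrete hypothesis that $D$ contains no strata of $\Delta$, noting that the parameter $t$ in Theorem \ref{f-thm2.12}(iii) is allowed to be an arbitrary positive real (so $t=k/l$ is fine even when $l\nmid k$), and recording that $\Delta$ being a genuine (integral) simple normal crossing divisor is what makes $K_X+\Delta$ Cartier so that Theorem \ref{f-thm2.12} applies on the nose.
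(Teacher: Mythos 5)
Your proposal is correct and matches the paper's intent exactly: the paper presents Theorem \ref{f-thm2.15} precisely as a special case of Theorem \ref{f-thm2.12}, and your specialization ($\pi\colon X\to\operatorname{Spec}\mathbb C$, the Cartier divisor of Theorem \ref{f-thm2.12} taken to be $K_X+\Delta+lL$, $H=lL$, $t=k/l$, $D'=0$) is the intended reduction. The verification that ``$D$ contains no strata of $\Delta$'' is exactly permissibility with respect to $(X,\Delta)$ (via \ref{f-say6.7} and Remark \ref{f-rem2.17}) is the only point needing care, and you handle it correctly.
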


If $\Delta=0$ in the above, 
then Theorem \ref{f-thm2.15} is nothing but Koll\'ar's original 
injectivity theorem (Theorem \ref{f-thm2.1}). 

\begin{rem}\label{f-rem2.16}
Let $\Delta$ be a simple normal crossing divisor on a smooth 
variety $X$. 
Let $\Delta=\sum _{i\in I}\Delta_i$ be the irreducible decomposition 
of $\Delta$. 
Then a closed subset $W$ of $X$ is 
called a stratum of $\Delta$ if 
$W$ is an irreducible component of 
$\Delta_{i_1}\cap \cdots \cap \Delta_{i_k}$ for 
some $\{i_1, \cdots, i_k\}\subset I$. 
\end{rem}

\begin{rem}\label{f-rem2.17} 
Let $\Delta$ be a simple normal crossing divisor on a smooth 
variety $V$. 
Then $W$ is a stratum of $\Delta$ if and only if 
$W$ is a log canonical center of $(V, \Delta)$ 
(see \ref{f-say6.5} in Section \ref{f-sec6}).  
\end{rem}

We have discussed the Hodge theoretic aspect of 
Kodaira-type vanishing theorems. 
For the details and various related topics, see 
\cite{esnault-viehweg},\cite[Part III]{kollar-book}, \cite{fujino-foundation}, 
and references therein. 

\subsection{Complex analytic setting}\label{f-subsec2.1}
After Koll\'ar obtained Theorem \ref{f-thm2.1}, 
Enoki (see \cite[Theorem 0.2]{enoki}) proved: 

\begin{thm}[Enoki's injectivity theorem]\label{f-thm2.18}
Let $X$ be a compact K\"ahler manifold and 
let $\mathcal L$ be a semipositive line bundle on $X$. 
Then, for any nonzero holomorphic section $s$ of $\mathcal L^{\otimes k}$ 
with 
some positive integer $k$,  
the multiplication homomorphism 
$$
\times s: H^i(X, \omega_X\otimes \mathcal L^{\otimes l})\longrightarrow H^i(X, \omega_X
\otimes \mathcal L^{\otimes (l+k)}), 
$$ 
induced by $\otimes s$, 
is injective for every $i$ and every positive integer $l$. 
\end{thm}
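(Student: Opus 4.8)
The plan is to run the standard harmonic-theoretic argument on the compact K\"ahler manifold $X$, exploiting the fact that semipositivity of $\mathcal L$ forces harmonic representatives of the relevant cohomology classes to be ``as flat as possible'' in the $\mathcal L$-directions. First I would fix a Hermitian metric $h$ on $\mathcal L$ with semipositive curvature $\Theta_h \geq 0$, together with a K\"ahler metric $\omega$ on $X$. Using Dolbeault's theorem, identify $H^i(X, \omega_X \otimes \mathcal L^{\otimes l})$ with the space of $\overline\partial$-harmonic $(n,i)$-forms with values in $\mathcal L^{\otimes l}$, where $n = \dim X$; write $\mathcal H^{n,i}(\mathcal L^{\otimes l})$ for this finite-dimensional space. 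The map $\times s$ in the statement sends a class to its cup product with the section $s \in H^0(X, \mathcal L^{\otimes k})$; at the level of harmonic forms this is $u \mapsto su$, followed by harmonic projection. So injectivity of $\times s$ is equivalent to the assertion that if $u \in \mathcal H^{n,i}(\mathcal L^{\otimes l})$ and $su$ is $\overline\partial$-exact, then $u = 0$.

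The key step is the Bochner--Kodaira--Nakano identity applied to forms of type $(n,i)$. For an $\mathcal L^{\otimes l}$-valued $(n,i)$-form $u$ one has, schematically,
$$
\|\overline\partial u\|^2 + \|\overline\partial^{*} u\|^2 = \|\nabla' u\|^2 + l\langle [\sqrt{-1}\,\Theta_h, \Lambda]\, u, u\rangle,
$$
where $\nabla'$ is the $(1,0)$-part of the Chern connection and $\Lambda$ is the adjoint of the Lefschetz operator. Because $u$ has bidegree $(n,i)$ the curvature term $\langle [\sqrt{-1}\,\Theta_h,\Lambda] u, u\rangle$ is pointwise $\geq 0$ once $\Theta_h \geq 0$ (this is the Nakano-type positivity that works for top-degree holomorphic forms). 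Hence for a harmonic $u$ (so $\overline\partial u = \overline\partial^{*} u = 0$) both terms on the right vanish: in particular $\nabla' u = 0$, so $u$ is ``parallel'' in the $(1,0)$-directions, and moreover $u$ is annihilated pointwise by the semipositive curvature operator. The next step is to transfer these two facts to the form $su$. Since $s$ is holomorphic, $\nabla'(su) = (\nabla' s)\, u \pm s\,\nabla' u$; the term $\nabla' u$ vanishes, and one checks that after contracting with the curvature identity for $\mathcal L^{\otimes(l+k)}$-valued forms, the fact that $u$ lies in the kernel of $[\sqrt{-1}\,\Theta_h,\Lambda]$ forces $su$ to again be harmonic. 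Thus $su$ is itself a harmonic form; but a harmonic form that is also $\overline\partial$-exact must be zero by the Hodge orthogonal decomposition. Since $s \not\equiv 0$ and $X$ is connected, $su \equiv 0$ implies $u \equiv 0$, which is the desired injectivity.

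I expect the main obstacle to be the middle step: showing that multiplication by the holomorphic section $s$ preserves harmonicity, i.e. that $su$ is again $\overline\partial$-harmonic whenever $u$ is. This is not formal — it genuinely uses both consequences of the Bochner formula for $u$ (the vanishing of $\nabla' u$ and the vanishing of the curvature pairing), and it requires care with the bundle the forms take values in changing from $\mathcal L^{\otimes l}$ to $\mathcal L^{\otimes(l+k)}$, since the curvature term scales by the twist. One has to verify that $\overline\partial^{*}(su) = 0$ by a computation using the K\"ahler identities $\overline\partial^{*} = \sqrt{-1}[\Lambda, \partial]$ and the holomorphy $\overline\partial s = 0$, together with the pointwise kernel condition on $u$; the holomorphy of $s$ makes $\overline\partial(su) = 0$ immediate, but the adjoint equation is where the semipositivity is really consumed. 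Once that lemma is in place, the rest of the argument is the routine Hodge-theoretic packaging described above.
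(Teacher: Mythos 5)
Your proposal follows essentially the same route as the paper's sketch: identify the cohomology groups with spaces of harmonic $(n,i)$-forms, use the Nakano identity together with $\sqrt{-1}\Theta_h(\mathcal L)\geq 0$ to show that $s\otimes\varphi$ is again harmonic for harmonic $\varphi$, and conclude injectivity since multiplication by a nonzero holomorphic section is injective on forms (equivalently, a harmonic exact form vanishes). One small correction: for a form of bidegree $(n,i)$ the term $\nabla'u$ vanishes automatically for type reasons, and what the Bochner--Kodaira--Nakano identity actually gives you for harmonic $u$ is $\nabla'^{*}u=0$ together with the pointwise vanishing of $\langle[\sqrt{-1}\Theta_h,\Lambda]u,u\rangle$; these are precisely the two facts consumed in verifying $\overline\partial^{*}(su)=0$, exactly as you anticipate.
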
 

\begin{rem}\label{f-rem2.19}
Let $\mathcal L$ be a holomorphic 
line bundle on a compact K\"ahler 
manifold $X$. 
We say that 
$\mathcal L$ is semipositive 
if there exists a smooth hermitian metric 
$h$ on $\mathcal L$ such that $\sqrt{-1}\Theta_h(\mathcal L)$ is 
a semipositive 
$(1, 1)$-form on $X$, 
where $\Theta_h(\mathcal L)=D^2_{(\mathcal L, h)}$ is the 
curvature form and $D_{(\mathcal L, h)}$ is the Chern connection 
of $(\mathcal L, h)$. 
\end{rem}

\begin{rem}\label{f-rem2.20} 
Let $X$ be a smooth projective variety and let $\mathcal L$ be 
a line bundle on $X$. 
If $\mathcal L$ is semiample, that is, 
$|\mathcal L^{\otimes k}|$ has no base points for some 
positive integer $k$, 
then $\mathcal L$ is semipositive in the sense of Remark \ref{f-rem2.19}.  
\end{rem}

Enoki's proof in \cite{enoki} 
is arguably simpler than the proof of Theorem \ref{f-thm2.1} 
based on Hodge theory. 
It only uses the standard results in 
the theory of harmonic forms on compact 
K\"ahler manifolds. Let us see the ideas of Enoki's 
proof of Theorem \ref{f-thm2.18}. 

\begin{proof}[Idea of Proof of Theorem \ref{f-thm2.18}] 
We put $n=\dim X$. 
Let $\mathcal H^{n, i}(X, \mathcal L^{\otimes l})$ 
(resp.~$\mathcal H^{n, i}(X, \mathcal L^{\otimes (l+k)})$ 
be the space of $\mathcal L^{\otimes l}$-valued 
(resp.~$\mathcal L^{\otimes (l+k)}$-valued) harmonic $(n, i)$-forms 
on $X$. 
By using the Nakano identity and the semipositivity of 
$\mathcal L$, 
we can easily check that 
$s\otimes \varphi$ is harmonic 
for every $\varphi\in \mathcal H^{n, i}(X, \mathcal L^{\otimes l})$. 
Therefore, 
$$
\times s: H^i(X, \omega_X\otimes \mathcal L^{\otimes l})\longrightarrow H^i(X, \omega_X
\otimes \mathcal L^{\otimes (l+k)}), 
$$ 
is nothing but 
$\otimes s: \mathcal H^{n, i}(X, \mathcal L^{\otimes l}) \to 
\mathcal H^{n, i}(X, \mathcal L^{\otimes (l+k)}): \varphi\mapsto 
s\otimes  \varphi$, 
which is obviously injective. 
\end{proof}

We note that Theorem \ref{f-thm2.18} is better than 
Theorem \ref{f-thm2.1} by Remark \ref{f-rem2.20}. 
Unfortunately, I do not know how to generalize Enoki's theorem 
appropriately for various geometric applications. 
Although I obtained some generalizations of Theorem \ref{f-thm2.18} 
and their applications in \cite{fujino-trans} and 
\cite{fujino-trans2}, 
they are not so useful in the 
minimal model program compared with 
Theorem \ref{f-thm2.12} and Theorem \ref{f-thm2.13}. 
Related to Theorem \ref{f-thm2.15}, we have: 

\begin{conj}\label{f-conj2.21}
Let $X$ be a compact K\"ahler manifold and 
let $\Delta$ be a simple normal crossing divisor on $X$. 
Let $\mathcal L$ be a semipositive line bundle on $X$ and let $s$ be a 
nonzero holomorphic section of 
$\mathcal L^{\otimes k}$ on $X$ for some positive integer $k$. 
Assume that $(s=0)$ contains no strata of $\Delta$. 
Then the multiplication homomorphism 
$$
\times s: H^i(X, \omega_X\otimes \mathcal O_X(\Delta)\otimes 
\mathcal L^{\otimes l})\to H^i(X, \omega_X\otimes \mathcal O_X(\Delta)\otimes 
\mathcal L^{\otimes (l+k)}), 
$$ 
induced by $\otimes s$, is injective for every positive integer $l$ and every 
$i$. 
\end{conj}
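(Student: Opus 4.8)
The plan is to prove Conjecture~\ref{f-conj2.21} by combining Enoki's harmonic-forms argument (the Idea of Proof of Theorem~\ref{f-thm2.18}) with the $L^2$-theory of a complete metric on the noncompact manifold $X^\circ:=X\setminus\Delta$. Write $n=\dim X$, fix the smooth semipositive metric $h$ on $\mathcal L$, and choose a complete K\"ahler metric $\omega$ on $X^\circ$ that near $\Delta=(z_1\cdots z_r=0)$ is mutually bounded with the Poincar\'e (cusp) model
\[
\sum_{j=1}^{r}\frac{\sqrt{-1}\,dz_j\wedge d\bar z_j}{|z_j|^2(\log|z_j|^2)^2}+\sum_{j=r+1}^{n}\sqrt{-1}\,dz_j\wedge d\bar z_j .
\]
Fix also a Poincar\'e-type weight $\psi$ on $X^\circ$, i.e.\ a function with $\psi\sim(1+\varepsilon)\sum_{j\le r}\log\log(1/|z_j|^2)$ near $\Delta$; with this $\psi$ a short computation shows that the space of $L^2$ holomorphic $(n,0)$-forms valued in $\mathcal L^{\otimes l}$ with respect to $\omega$, $h^{l}$, $e^{-\psi}$ is precisely $H^0(X,\omega_X\otimes\mathcal O_X(\Delta)\otimes\mathcal L^{\otimes l})$, the weight $e^{-\psi}$ being just weak enough near $\Delta$ to admit simple poles but no worse. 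Granting an $L^2$-$\bar\partial$ Poincar\'e lemma for this weighted cusp setting, the corresponding $(n,\bullet)$ $L^2$-Dolbeault complex is a fine resolution and one obtains a de Rham--Weil type isomorphism
\[
H^i(X,\omega_X\otimes\mathcal O_X(\Delta)\otimes\mathcal L^{\otimes l})\;\cong\;H^{n,i}_{(2)}(X^\circ,\mathcal L^{\otimes l};\,\omega,h^{l},e^{-\psi}),
\]
and likewise with $l$ replaced by $l+k$. This reduces the conjecture to a statement about weighted $L^2$-forms on $X^\circ$.

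Next I would set up the Hodge theory on $X^\circ$: one needs that $\bar\partial$ on the weighted $L^2$ $(n,\bullet)$-complex has closed range, so that the $L^2$ Hodge decomposition holds and every class in $H^{n,i}_{(2)}(X^\circ,\mathcal L^{\otimes l};\omega,h^l,e^{-\psi})$ is represented by a unique harmonic form $\varphi$ with $\bar\partial\varphi=\bar\partial^{*}\varphi=0$. Completeness of $\omega$ supplies self-adjointness of the Laplacian and the usual density of compactly supported smooth forms; closed range should then follow either from the finite-dimensionality of the coherent cohomology on the left together with the bounded-geometry structure of the cusp ends, or from an a priori estimate in the relevant bidegree. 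This has to be carried out uniformly near the deepest strata $\Delta_{i_1}\cap\cdots\cap\Delta_{i_j}$, where the cusp geometry is most degenerate; together with the difficulty in the next step, this is where I expect the real work, and presumably part of why the statement is still only conjectural.

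Granting harmonic representatives, the Enoki step proceeds essentially as in the compact case, with one genuine new difficulty. Given harmonic $\varphi$ valued in $\mathcal L^{\otimes l}$, the form $s\varphi$ is $\bar\partial$-closed (as $s$ is holomorphic) and lies in the weighted $L^2$-space for $\mathcal L^{\otimes(l+k)}$, since $|s\varphi|^2=|s|^2_{h^k}|\varphi|^2$ and $\sup_X|s|_{h^k}<\infty$; one wants to conclude $\bar\partial^{*}(s\varphi)=0$ from the Bochner--Kodaira--Nakano identity. The subtlety is that the relevant curvature is now $(l+k)\sqrt{-1}\Theta_h(\mathcal L)+\sqrt{-1}\partial\bar\partial\psi$, and near $\Delta$ the term $\sqrt{-1}\partial\bar\partial\psi$ is negative and comparable to $-(1+\varepsilon)\,\omega$, so Enoki's exact-semipositivity argument does not apply verbatim. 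The plan is to compensate this bounded negative part by a twisted Bochner--Kodaira--Nakano estimate of Ohsawa type, using the completeness of $\omega$ (and, if needed, letting $\varepsilon\to 0$), and to show that the resulting error terms vanish for harmonic $\varphi$, so that $s\varphi$ is again harmonic --- making this work is, I think, the crux of the whole problem. The hypothesis that $(s=0)$ contains no stratum of $\Delta$ enters precisely here and at the interface with the previous paragraph: it guarantees that $s$ is nonvanishing near every stratum of $\Delta$, which is exactly what one needs to run the harmonicity argument near the most degenerate part of the cusp geometry, and it prevents $\Delta+(s=0)$ from acquiring a component of coefficient greater than one; in short, it is the analytic counterpart of the permissibility hypothesis on $D$ in Theorem~\ref{f-thm2.15}.

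Once these two steps are in place, the conjecture follows formally exactly as in the compact case. If $\times s$ annihilates a class $\alpha\in H^i(X,\omega_X\otimes\mathcal O_X(\Delta)\otimes\mathcal L^{\otimes l})$, represent $\alpha$ by its harmonic form $\varphi$; then $s\varphi$ is $\bar\partial$-exact and, by the Enoki step, harmonic, hence $s\varphi\equiv 0$ on $X^\circ$, and since $X^\circ$ is connected with $s\not\equiv 0$ we conclude $\varphi\equiv 0$, so $\alpha=0$. As a cross-check, when $X$ is moreover projective one can attempt a more elementary route, deducing the statement from Theorem~\ref{f-thm2.15} after a cyclic cover branched along $(s=0)$ followed by a resolution --- the no-stratum hypothesis being exactly what keeps the branch locus permissible with respect to the transform of $\Delta$ --- although one still has to bridge the gap between semipositivity and semiampleness of $\mathcal L$, and this route in any case leaves the genuinely K\"ahler case untouched, for which the weighted $L^2$ argument above seems to be the right framework.
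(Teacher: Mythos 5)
There is no proof in the paper to compare against: the statement you are proving is Conjecture \ref{f-conj2.21}, which the paper explicitly leaves open (Remark \ref{f-rem2.27} records that it was later settled by Matsumura only under the extra hypothesis that $\Delta$ is smooth). Your text is accordingly a strategy rather than a proof, and the two steps you yourself flag as ``the real work'' --- (1) closed range of $\bar\partial$ and an $L^2$ Hodge decomposition for the weighted $(n,\bullet)$-complex with respect to a complete Poincar\'e-type metric with cusps along all strata of $\Delta$, together with the de Rham--Weil identification of the resulting $L^2$ cohomology with $H^i(X,\omega_X\otimes\mathcal O_X(\Delta)\otimes\mathcal L^{\otimes l})$, and (2) the harmonicity of $s\varphi$ despite the uniformly negative contribution of $\sqrt{-1}\partial\bar\partial\psi$ to the Bochner--Kodaira--Nakano identity --- are not details to be filled in later; they are the entire content of the conjecture. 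In particular, Enoki's argument uses exact semipositivity of the total curvature, and once the weight destroys that, there is no known twisted estimate that recovers $\bar\partial^*(s\varphi)=0$ in this generality; this is precisely the obstruction that has so far confined the known results to $\Delta$ smooth (depth-one strata), where the cusp geometry does not iterate.

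One concrete misstep beyond the acknowledged gaps: you assert that the hypothesis ``$(s=0)$ contains no strata of $\Delta$'' guarantees that $s$ is nonvanishing near every stratum. It does not. The hypothesis says that no stratum is \emph{contained} in the divisor $(s=0)$; the zero set of $s$ may still meet, and pass through, every stratum. So the mechanism you propose for how this hypothesis enters the harmonic-theoretic argument (nonvanishing of $s$ on a neighborhood of the degenerate part of the metric) is not available, and the correct analytic role of the permissibility hypothesis would have to be identified differently --- for instance through the local integrability of $|s|^{-2}$-type weights along the strata, as in the multiplier-ideal formulations of Matsumura's work cited in Remark \ref{f-rem2.27}. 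The projective cross-check via a cyclic cover and Theorem \ref{f-thm2.15} also does not close the gap you note there: semipositive line bundles on projective manifolds need not be semiample, so Theorem \ref{f-thm2.15} does not apply even in the projective case.
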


I do not know the precise relationship between 
the injectivity theorems of Koll\'ar and Enoki. Thus I pose: 

\begin{problem}\label{f-prob2.22}
Clarify the relationship between Koll\'ar's injectivity 
theorem (Theorem \ref{f-thm2.1}) and 
Enoki's injectivity theorem (Theorem \ref{f-thm2.18}). 
\end{problem}

For almost all geometric applications, we use Theorem \ref{f-thm2.5} (ii) 
for $i=0$. Theorem \ref{f-thm2.5} (ii) for $i=0$ is sufficient 
for Viehweg's theory of 
weak positivity (see \cite{viehweg1}, \cite{viehweg2}, 
and \cite{fujino-subadditivity}). See also Subsection \ref{f-subsec4.2} below. 
Note that Theorem \ref{f-thm2.5} (ii) for $i=0$ is a special case of 
Ohsawa's vanishing theorem. 

\begin{thm}[{\cite[Theorem 3.1]{ohsawa}}]\label{f-thm2.23} 
Let $X$ be a compact K\"ahler manifold, let 
$f:X\to Y$ be a holomorphic 
map to an analytic space $Y$ with 
a K\"ahler form $\sigma$, and let $(E, h)$ be a holomorphic 
vector bundle on $X$ 
with a smooth hermitian metric $h$. 
Assume that $\sqrt{-1}\Theta_h(E)\geq _{\mathrm{Nak}}
\mathrm{Id}_E\otimes f^*\sigma$, that is, 
$\sqrt{-1}\Theta _h (E)-\mathrm{Id}_E\otimes f^*\sigma$ is 
semipositive in the sense of Nakano, 
where $\Theta_h(E)$ is the curvature form of $(E, h)$. 
Then 
$$
H^j(Y, f_*(\omega_X\otimes E))=0
$$ 
for every $j>0$. 
\end{thm}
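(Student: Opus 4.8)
The plan is to follow Ohsawa's original $L^2$ strategy, which combines the Bochner--Kodaira--Nakano technique with a Leray-type decomposition relative to $f$. First I would set up the analytic framework: fix the Kähler form $\omega$ on $X$, the pullback form $f^*\sigma$ on $X$ (a semipositive $(1,1)$-form), and work with $E$-valued $(n,j)$-forms, $n=\dim X$. Since $\omega_X\otimes E$-valued forms of bidegree $(0,j)$ correspond to $E$-valued forms of bidegree $(n,j)$ via contraction with a local frame of $\Omega^n_X$, the cohomology group $H^j(X,\omega_X\otimes E)$ is computed by the Dolbeault complex of such forms, and by Hodge theory it is represented by the space $\mathcal H^{n,j}(X,E)$ of harmonic $E$-valued $(n,j)$-forms with respect to the metrics $\omega$ and $h$.

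Next I would invoke the Bochner--Kodaira--Nakano identity in Ohsawa's twisted form: for a smooth $E$-valued $(n,j)$-form $u$,
\[
\|\bar\partial u\|^2 + \|\bar\partial^* u\|^2 = \|D' u\|^2 + \int_X \langle [\sqrt{-1}\Theta_h(E),\Lambda_\omega]u, u\rangle,
\]
where $\Lambda_\omega$ is the adjoint of wedging with $\omega$. The curvature hypothesis $\sqrt{-1}\Theta_h(E)\geq_{\mathrm{Nak}}\mathrm{Id}_E\otimes f^*\sigma$ implies that for $(n,j)$-forms the curvature term is bounded below by the corresponding term built from $f^*\sigma$, which for bidegree $(n,\bullet)$ forms is nonnegative (this is the standard positivity of $[\sqrt{-1}\Theta,\Lambda]$ on forms of maximal holomorphic degree). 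Hence every harmonic form $u\in\mathcal H^{n,j}(X,E)$ satisfies $D'u=0$ and, crucially, $\langle [\sqrt{-1}\Theta_h(E)-\mathrm{Id}_E\otimes f^*\sigma,\Lambda_\omega]u,u\rangle=0$ pointwise. The second equality forces $u$ to be "$f$-vertical-free": writing things out in local coordinates adapted to a generic fiber of $f$, it forces the components of $u$ that involve the directions along which $f^*\sigma$ is positive to vanish. Equivalently, $u$ is locally pulled back from the base, so it descends to a section of $f_*(\omega_X\otimes E)$ on $Y$.

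Then I would translate this into a vanishing statement. The harmonic representative $u$, being $\bar\partial$-closed, $\bar\partial^*$-closed, $D'$-closed, and pulled back from $Y$, represents a class in $H^0(Y, R^jf_*(\omega_X\otimes E))$; but the point of the argument is sharper — one shows directly that for $j>0$ the space $\mathcal H^{n,j}(X,E)$ is actually zero by exploiting that a form pulled back from $Y$ with positive antiholomorphic degree in the fiber directions cannot be nonzero while simultaneously killed by the fiberwise curvature term, unless the fiberwise part is already zero, which for $j>0$ degree reasons leaves no room. More carefully: since $H^j(X,\omega_X\otimes E)=H^j(Y,f_*(\omega_X\otimes E))\oplus(\text{higher }R^if_*)$ does not immediately collapse, the correct route is Ohsawa's: first establish the vanishing on fibers/relatively, then push down. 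One does a Leray spectral sequence argument combined with the fiberwise version of the same Bochner identity to conclude $R^jf_*(\omega_X\otimes E)=0$ for $j>0$ as well, and then $H^j(Y,f_*(\omega_X\otimes E))$ sits inside $H^j(X,\omega_X\otimes E)$, which is represented by the harmonic space that the curvature estimate annihilates for $j>0$.

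The main obstacle is the regularity and functional-analytic bookkeeping when $Y$ is merely an analytic space rather than a manifold: the Kähler form $\sigma$ on $Y$ and the pushforward sheaf $f_*(\omega_X\otimes E)$ must be handled with care (one typically works on the smooth locus of $f$, controls the image of the singular locus, and uses that it has measure zero together with an $L^2$-extension/Hartogs-type argument to see that harmonic forms extend and that no cohomology is lost across the bad locus). Also delicate is the passage from "pointwise annihilation of the twisted curvature term" to "the form is pulled back from the base," which requires choosing, near a generic point of a fiber, holomorphic coordinates in which $f$ is a projection and analyzing the eigenvalues of $\sqrt{-1}\Theta_h(E)-\mathrm{Id}_E\otimes f^*\sigma$; the strict Nakano positivity of the difference in the fiber directions is exactly what is needed and is where the hypothesis is used in full strength. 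Everything else — the Bochner--Kodaira--Nakano identity, $L^2$ Hodge theory on the compact Kähler manifold $X$, and the Leray spectral sequence — is standard and I would cite it rather than reprove it.
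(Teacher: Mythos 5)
The paper does not actually prove Theorem \ref{f-thm2.23}; it defers entirely to Ohsawa's original article (``For the proof of Theorem \ref{f-thm2.23}, see \cite{ohsawa}''), so there is no in-paper argument to compare against and your sketch has to stand on its own. Its first half is sound: representing $H^j(X,\omega_X\otimes E)$ by harmonic $E$-valued $(n,j)$-forms, applying the Bochner--Kodaira--Nakano identity, and using $\sqrt{-1}\Theta_h(E)\geq_{\mathrm{Nak}}\mathrm{Id}_E\otimes f^*\sigma$ together with the nonnegativity of $[\,\cdot\,,\Lambda_\omega]$ in bidegree $(n,\bullet)$ to conclude that a harmonic $u$ satisfies $\langle[\mathrm{Id}_E\otimes f^*\sigma,\Lambda_\omega]u,u\rangle=0$ pointwise is exactly the standard opening move. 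But you then draw the wrong conclusion from that identity: it kills the components of $u$ whose antiholomorphic indices point in the directions where $f^*\sigma$ is positive, i.e.\ the \emph{horizontal} directions, so the surviving components of $u$ are purely \emph{vertical} in their $(0,j)$-part. Such a form is the opposite of ``pulled back from the base''; it is precisely the kind of form that represents classes in the higher direct images $R^jf_*(\omega_X\otimes E)$, not in $H^j(Y,f_*(\omega_X\otimes E))$. So the step ``hence $u$ descends to $Y$'' does not follow.

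The third paragraph contains the fatal gap: you claim that a fiberwise Bochner argument gives $R^jf_*(\omega_X\otimes E)=0$ for $j>0$, and then finish by Leray. This is false. Restricted to a fiber, $f^*\sigma$ vanishes identically, so the hypothesis only gives Nakano \emph{semi}positivity of $E$ along the fibers, which yields no fiberwise vanishing at all. Concretely, if $Y$ is projective, $A$ is a positively curved ample line bundle on $Y$ and $E=f^*A$, the hypothesis holds with $\sigma$ the curvature of $A$, yet $R^jf_*(\omega_X\otimes f^*A)=A\otimes R^jf_*\omega_X$ is nonzero for $j=\dim X-\dim Y$ whenever $f$ has positive-dimensional fibers. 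Indeed, the paper's own Conjecture \ref{f-conj2.25} asks whether $H^j(Y,R^if_*(\omega_X\otimes E))=0$ for $j>0$ precisely because the higher direct images do \emph{not} vanish and their cohomology is not controlled by your argument; if your claim were correct that conjecture would be vacuous for $i>0$. The genuinely hard content of Ohsawa's theorem is the isolation of the ``horizontal'' part of the harmonic space as the piece computing $H^j(Y,f_*(\omega_X\otimes E))$ (a Koll\'ar/Takegoshi-type decomposition, or Ohsawa's $L^2$-cohomology argument on the complete K\"ahler complement of the singular locus), and that step is missing from your proposal. The difficulties you flag at the end (singularities of $Y$, measure-zero loci) are real but secondary to this structural gap.
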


For the proof of Theorem \ref{f-thm2.23}, see \cite{ohsawa}. 
I am not so familiar with 
Theorem \ref{f-thm2.23} and do not know 
if the formulation of Theorem \ref{f-thm2.23} 
is natural or not. 

\begin{rem}\label{f-rem2.24} 
For the details of $\sigma$ and $f^*\sigma$ in 
Theorem \ref{f-thm2.23}, see \cite[\S 3]{ohsawa}. 
Note that $Y$ is permitted to have singularities. 
\end{rem}

By comparing Theorem \ref{f-thm2.23} 
with Theorem \ref{f-thm2.5} (ii), it is natural to consider: 

\begin{conj}\label{f-conj2.25}
With the same assumptions as in Theorem \ref{f-thm2.23}, 
we have 
$$
H^j(Y, R^if_*(\omega_X\otimes E))=0
$$ 
for every $i$ and every positive integer $j$. 
\end{conj}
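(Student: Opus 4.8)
The plan is to prove Conjecture \ref{f-conj2.25} — the natural extension to higher direct images of Theorem \ref{f-thm2.23}, which is its case $i=0$ — by transplanting into the analytic framework the way Koll\'ar's vanishing theorem (Theorem \ref{f-thm2.5}(ii), in the form of Theorem \ref{f-thm2.13}(ii)) is deduced from an injectivity theorem and a torsion-freeness statement, with the idea of the proof of Theorem \ref{f-thm2.18} (Enoki's harmonic argument) playing the role of the $E_1$-degeneration. The three ingredients I would need are: (A) an Enoki-type injectivity theorem for the sheaves $R^if_*(\omega_X\otimes E)$ under the K\"ahler morphism $f$; (B) an analytic torsion-freeness for these sheaves, the analogue of Theorem \ref{f-thm2.5}(i); and (C) Serre's vanishing theorem on $Y$. (For $E$ a line bundle equipped with a semipositive, possibly singular, metric, statements of type (A) and (B) are known by work of Matsumura; the new feature here is the Nakano condition $\sqrt{-1}\Theta_h(E)\geq_{\mathrm{Nak}}\mathrm{Id}_E\otimes f^*\sigma$ for a vector bundle.)

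For (A), let $B$ be a positive line bundle on $Y$ with a smooth metric; since only the curvature inequality on $E$ is assumed, we may choose the metric $h$ so that the part of $\sqrt{-1}\Theta_h(E)$ transverse to the fibres is proportional to $f^*$ of the curvature of $B$. For $m\gg 0$ and a general section $t$ of $B^{\otimes m}$ I would prove that
\[
\times f^*t\colon\; H^j\!\left(Y,\,R^if_*(\omega_X\otimes E)\right)\longrightarrow H^j\!\left(Y,\,R^if_*(\omega_X\otimes E)\otimes B^{\otimes m}\right)
\]
is injective for all $i$ and all $j$. The mechanism is that of Theorems \ref{f-thm2.18} and \ref{f-thm2.23}: away from the discriminant $\Sigma$ of $f$ one represents a class in $H^j(Y,R^if_*(\omega_X\otimes E))$ by an $E$-valued harmonic $(n,i+j)$-form on $X$ (with $n=\dim X$), using the $E_2$-degeneration of the Leray spectral sequence of $f$ for $\omega_X\otimes E$ — itself a consequence of Enoki-type injectivity on the total space, valid because $E\otimes f^*B^{\otimes l}$ is Nakano semipositive for every $l\geq 0$ — and then the Bochner--Kodaira--Nakano identity together with $\sqrt{-1}\Theta_h(E)\geq_{\mathrm{Nak}}\mathrm{Id}_E\otimes f^*\sigma\geq 0$ and the chosen curvature compatibility forces $f^*t\otimes\varphi$ to remain harmonic for every harmonic $\varphi$, whence injectivity. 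What makes the \emph{untwisted} statement accessible is precisely that $E$ is itself Nakano semipositive, so its harmonic forms already satisfy the curvature constraint that in the classical injectivity theorem is only available after twisting by a positive power of a line bundle.

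For (B) and (C): granting (A) and the Leray degeneration, torsion-freeness of $R^if_*(\omega_X\otimes E)$ follows along classical lines (a torsion subsheaf is supported on a proper analytic subset, and a general $f^*t$ with $t\in|B^{\otimes m}|$ would annihilate the associated cohomology class, contradicting the injectivity in (A)); in particular the case $i=0$ of Theorem \ref{f-thm2.23} is recovered. Then for $m\gg 0$, on the projective variety $Y$, Serre vanishing gives $H^j(Y,R^if_*(\omega_X\otimes E)\otimes B^{\otimes m})=0$ for every $j>0$, while a general $t\in|B^{\otimes m}|$ avoids every associated point of $R^if_*(\omega_X\otimes E)$ so that (A) applies; combining the two, $H^j(Y,R^if_*(\omega_X\otimes E))=0$ for all $j>0$ and all $i$. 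For $Y$ only a compact analytic space with a K\"ahler form, one first handles $Y$ projective (equivalently, via Chow's lemma, $Y$ K\"ahler) and then reduces the general case to a resolution of singularities of $Y$, or works with $L^2$-cohomology adapted to $\sigma$ from the outset.

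I expect the main obstacle to be the behaviour of the fibrewise harmonic bundle along the discriminant $\Sigma$ of $f$: over $Y\setminus\Sigma$ the spaces of $E$-valued fibrewise harmonic $(n_f,i)$-forms ($n_f$ the relative dimension) form a locally free sheaf with a natural $L^2$-metric of computable curvature, but this sheaf has nontrivial monodromy around $\Sigma$ and its metric degenerates there, and both the $E_2$-degeneration of the Leray spectral sequence and the transfer of the harmonic representation in (A) depend on a precise control of this degeneration. In the algebraic case this control is supplied by the theory of (mixed) Hodge structures — it underlies Theorems \ref{f-thm2.5}(i) and \ref{f-thm2.13} — and an analytic substitute, in the spirit of the nilpotent orbit theorem and the $L^2$-norm asymptotics of a degenerating variation, is what the argument would ultimately rest on. A secondary difficulty, already present in Theorem \ref{f-thm2.23}, is that $Y$ may be singular, which is why the reduction to a resolution is needed.
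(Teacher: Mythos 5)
First, a point of comparison that is really a non-comparison: the paper does not prove this statement. It is posed as Conjecture \ref{f-conj2.25}, and Remark \ref{f-rem2.27} records that it was settled only later, by Matsumura \cite{matsumura6}. So your proposal can only be judged on its own terms, and on those terms it has a structural gap. Your ingredients (A) and (C) both presuppose a positive line bundle $B$ on $Y$, but in Theorem \ref{f-thm2.23} the base $Y$ is merely an analytic space carrying a K\"ahler form $\sigma$: it need not be projective, nor even Moishezon, so no such $B$ exists and Serre vanishing is simply unavailable. Your parenthetical ``equivalently, via Chow's lemma, $Y$ K\"ahler'' is not correct --- projectivity and K\"ahlerness are inequivalent for compact analytic spaces, and Chow's lemma does not convert one into the other --- and the fallback of ``working with $L^2$-cohomology adapted to $\sigma$ from the outset'' is not a repair of the argument but a replacement of it by the entire theorem. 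The actual mechanism by which the hypothesis $\sqrt{-1}\Theta_h(E)\geq_{\mathrm{Nak}}\mathrm{Id}_E\otimes f^*\sigma$ produces vanishing (in Ohsawa's proof of the case $i=0$ in \cite{ohsawa}, and in Matsumura's proof of the general case in \cite{matsumura6}) is that a harmonic representative of a class in $H^j$ with $j>0$ must involve the horizontal directions coming from $Y$, where the curvature is strictly positive, and the Bochner--Kodaira--Nakano inequality then kills it directly; no ample twist and no Serre vanishing enter. The injectivity-plus-Serre-vanishing template is the algebraic route to Theorem \ref{f-thm2.5} (ii), and it does not transplant to this setting.

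Even if one grants $Y$ projective, the two analytic inputs you invoke carry essentially all of the content and are left unproved. The $E_2$-degeneration of the Leray spectral sequence for $\omega_X\otimes E$ under a K\"ahler morphism is not a formal consequence of ``Enoki-type injectivity on the total space''; for Nakano semipositive coefficients this is essentially Takegoshi's theorem \cite{takegoshi}, a substantial result in its own right. Likewise, the control of the fibrewise harmonic bundle and its $L^2$-metric along the discriminant $\Sigma$ --- which you correctly single out as the main obstacle --- is precisely where the difficulty of the conjecture lies, and the proposal offers only the hope of an ``analytic substitute'' for Hodge theory rather than an argument. A smaller but real problem is the step in (A) where you ``choose the metric $h$ so that the part of $\sqrt{-1}\Theta_h(E)$ transverse to the fibres is proportional to $f^*$ of the curvature of $B$'': the hypothesis is a curvature inequality for a given $h$, and replacing $h$ by a metric with prescribed horizontal curvature is not licensed and may destroy that inequality. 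In short, the strategy as written would at best reprove a projective special case, and even there it defers the genuinely hard analytic steps.
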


We close this section with: 

\begin{problem}\label{f-prob2.26}
Clarify the relationship 
between Koll\'ar's vanishing theorem 
(Theorem \ref{f-thm2.5} (ii)) and Ohsawa's vanishing 
theorem (Theorem \ref{f-thm2.23}). 
\end{problem}

For Enoki-type injectivity theorems, 
see, \cite{enoki}, 
\cite{takegoshi}, 
\cite{ohsawa-injectivity}, 
\cite{fujino-trans}, \cite{fujino-trans2}, \cite{matsumura1}, 
\cite{matsumura2}, \cite{matsumura3}, \cite{matsumura4}, 
\cite{matsumura5}, \cite{gongyo-matsumura}, etc.  

\begin{rem}[Added in September 2016]\label{f-rem2.27}
After I wrote this paper, there are some developments. 
In \cite{matsumura8}, Shin-ichi Matsumura proved Conjecture 
\ref{f-conj2.21} under the extra assumption that 
$\Delta$ is smooth (see \cite[Theorem 1.3 and Corollary 
1.4]{matsumura8}). 
He also proved Conjecture \ref{f-conj2.25} completely in 
\cite{matsumura6}. 
For the precise statement, see \cite[Theorem 1.3]{matsumura6}. 
In \cite{fujino-matsumura}, 
he and I obtained a generalization of Enoki's injectivity theorem, 
which can be seen as a generalization of Nedel's vanishing 
theorem (see \cite[Theorem A]{fujino-matsumura}). 
For some further developments, see \cite{fujino-kollar-nadel} and 
\cite{matsumura7}. 
\end{rem}

\section{On local freeness and 
semipositivity theorems}\label{f-sec3}

Let us start with Fujita's semipositivity theorem in \cite{fujita-kahler}. 

\begin{thm}[{\cite[(0.6) Main Theorem]{fujita-kahler}}]\label{f-thm3.1} 
Let $f:M\to C$ be a surjective morphism from a compact K\"ahler manifold 
onto a smooth projective 
curve $C$ with 
connected fibers. 
Then $f_*\omega_{M/C}$ is nef. 
\end{thm}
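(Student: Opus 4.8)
The plan is to prove nefness of $f_*\omega_{M/C}$ by reducing it to the semistable case via a base change, and then exploiting the curvature (or Hodge-theoretic) properties of the Hodge bundle. First I would note that $E := f_*\omega_{M/C}$ is a torsion-free sheaf on the smooth curve $C$, hence locally free; nefness is therefore the assertion that every quotient line bundle has nonnegative degree, equivalently (for bundles on curves) that for every finite morphism $\tau\colon C'\to C$ from a smooth projective curve and every quotient line bundle $Q$ of $\tau^*E$, one has $\deg Q\geq 0$. A standard semistable reduction argument (using resolution of singularities applied to the normalization of $M\times_C C'$, after a suitable ramified base change) lets me assume that $f$ has at worst unipotent local monodromies around the finitely many singular points of the fibration, i.e.\ that $f$ is ``semistable'' in the Hodge-theoretic sense; compatibility of $f_*\omega_{M/C}$ with such base changes (up to the expected correction) reduces the general statement to this case.

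Next I would identify $E$ with (a piece of) the Hodge bundle: letting $U\subseteq C$ be the open locus over which $f$ is smooth, $R^{n-1}f_*\mathbb{C}_{M_U}$ underlies a polarized variation of Hodge structure on $U$ of weight $n-1$, and $f_*\omega_{M/C}|_U$ is canonically the bottom Hodge filtration piece $F^{n-1}$, i.e.\ $\mathcal{H}^{n-1,0}$, of the canonical (Deligne) extension of this VHS across $C\setminus U$. The main analytic input is then that the Hodge metric on $F^{n-1}$ has Nakano-semipositive (in fact, by Griffiths, semipositive of a strong type) curvature, because $F^{n-1}$ is the ``last'' step of the Hodge filtration and the second fundamental form contributions enter with a favorable sign; combined with Schmid's nilpotent orbit theorem, which controls the degeneration of the Hodge metric near the punctures and guarantees that the singular-metric curvature current is a semipositive $(1,1)$-current with the correct mass, this gives that $c_1(E)$ is represented by a closed positive current, hence $\deg E \geq 0$. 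To upgrade ``$\deg E\geq 0$'' to ``$E$ is nef'' one applies the same analysis after an arbitrary finite base change $\tau\colon C'\to C$ (which preserves the VHS structure and the curvature positivity), and runs the argument for every quotient: a quotient line bundle $Q$ of $\tau^*E$ inherits a singular metric with semipositive curvature current by the curvature-decreasing property of quotients in the presence of Nakano semipositivity, so $\deg Q\geq 0$.

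The hard part will be the curvature computation together with the control of the metric near the boundary: one must show that although the Hodge metric on $F^{n-1}$ degenerates at the punctures, it does so mildly enough (logarithmic growth, by Schmid) that the curvature current extends across $C\setminus U$ as a positive current without negative point masses — otherwise the degree inequality could fail. An alternative to the analytic route, which I would mention as a fallback, is the algebraic approach of Fujita--Kawamata: bound the degree of $E$ below using the unipotent reduction and a careful analysis of the canonical extension together with Griffiths transversality, which yields the same conclusion (this is essentially how Theorem \ref{f-thm3.1} is generalized in the higher-dimensional base case treated later in Section \ref{f-sec3}). Either way, the crux is that the \emph{bottom} Hodge piece $f_*\omega_{M/C}$ is the one for which the Hodge metric is semipositively curved, and this is exactly the phenomenon that the mixed Hodge-theoretic machinery of the paper is designed to exploit in greater generality.
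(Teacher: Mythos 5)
The paper itself gives no proof of Theorem \ref{f-thm3.1}: it is quoted from \cite{fujita-kahler}, and the surrounding text only records that Fujita's original argument is ``rather elementary and purely computational,'' while genuinely Hodge-theoretic proofs are due to Zucker \cite{zucker3} (using Steenbrink's limits of Hodge structures \cite{steenbrink}) and, for the generalization in Theorem \ref{f-thm3.5}, to Kawamata \cite{kawamata-chara} via Schmid's asymptotics \cite{schmid}. So there is nothing in the paper to compare your argument against line by line; what you have written is an outline of the Zucker--Kawamata route, not of Fujita's computational one. As an outline it is essentially correct and isolates the right ingredients: nefness on a curve reduces to nonnegativity of degrees of quotient line bundles after finite base change; a ramified base change and resolution reduce to unipotent local monodromy; over the smooth locus $f_*\omega_{M/C}$ is the extreme Hodge piece $F^{n-1}$ of the VHS on $R^{n-1}f_*\mathbb C$, whose Hodge metric is Griffiths semipositive by Griffiths's curvature computation (exactly the input Remark \ref{f-rem3.4} cites for the smooth case); and Schmid's norm estimates show the metric degenerates only logarithmically at the punctures, so no negative mass appears when integrating the curvature of a quotient line bundle. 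You also correctly identify the boundary estimate as the crux.

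Three points deserve more care than ``up to the expected correction.'' First, the base-change comparison is a generically isomorphic injection $f'_*\omega_{M'/C'}\hookrightarrow\tau^*f_*\omega_{M/C}$; to descend nefness you must compose it with the surjection onto a given quotient $Q$ of $\tau^*f_*\omega_{M/C}$, observe that the image is a full-rank subsheaf of $Q$ and simultaneously a quotient of the unipotent model, and conclude $\deg Q\geq\deg(\mathrm{image})\geq 0$. Getting the direction of this map right is the entire content of the reduction step. Second, ``curvature-decreasing property of quotients'' is stated backwards: curvature increases in quotients and decreases in subbundles, and it is precisely the increase that makes quotient line bundles of a Griffiths-semipositive bundle semipositive. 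Third, since $M$ is only assumed K\"ahler, the resolution of the normalized fiber product lives a priori only in Fujiki's class $\mathcal C$; the Hodge-theoretic argument survives because the general fibers are unchanged and hence compact K\"ahler, but this should be said explicitly. None of these is fatal, but each is a place where a written-out proof must do real work.
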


Before we go further, 
let us recall the definition of nef locally free sheaves. 
 
\begin{defn}[Nef locally free sheaves]\label{f-def3.2} 
Let $\mathcal E$ be a locally free sheaf of finite rank on a 
complete algebraic variety $V$. 
Then $\mathcal E$ is called nef if $\mathcal E=0$ or  
$\mathcal O_{\mathbb P_V(\mathcal E)}(1)$ is 
nef on $\mathbb P_V(\mathcal E)$, 
that is, $\mathcal O_{\mathbb P_V(\mathcal E)}(1)\cdot C\geq 0$ for 
every curve $C$ on $\mathbb P_V(\mathcal E)$. 
A nef locally free sheaf $\mathcal E$ 
was originally called a (numerically) semipositive 
locally free sheaf in the literature. 
\end{defn}

\begin{rem}\label{f-rem3.3}
Assume that $X$ is a smooth projective variety for simplicity. 
Let $\mathcal L$ be a line bundle on $X$. 
Then $\mathcal L$ is nef in the sense of 
Definition \ref{f-def3.2} if and only if 
$\mathcal L$ is nef in the usual sense. 
If $\mathcal L$ is semipositive in the sense of 
Remark \ref{f-rem2.19}, then $\mathcal L$ is nef. 
However, a nef line bundle $\mathcal L$ is not necessarily semipositive 
in the sense of Remark \ref{f-rem2.19}. 
\end{rem}

\begin{rem}\label{f-rem3.4}
Note that $f$ is not necessarily smooth in Theorem \ref{f-thm3.1}. 
If $f$ is smooth 
in Theorem \ref{f-thm3.1}, 
then the nefness of $f_*\omega_{M/C}$ follows directly from Griffiths's 
calculations of connections and curvatures in \cite{griffiths}. 
\end{rem}

Although Fujita's theorem was inspired by Griffiths's paper \cite{griffiths} 
(see Remark \ref{f-rem3.4}), 
Fujita's original proof of Theorem \ref{f-thm3.1} in \cite{fujita-kahler} is not 
so Hodge theoretic. 
In the introduction of \cite{fujita-kahler}, 
Fujita wrote: 
\begin{quote}
The method looks rather elementary and 
purely computational, but 
it depends deeply (often implicitly) on the theory 
of variation of Hodge structures. 
\end{quote}

Professor Steven Zucker informed me that 
he read Fujita's article \cite{fujita-kahler} when it appeared in 
1978 and reproved Fujita's theorem from rather basic 
Hodge theory that appeals to 
Steenbrink's work \cite{steenbrink}. 
It is clear that he had already been very familiar with Schmid's 
result (see \cite{schmid}) on asymptotic behavior of Hodge metrics (see 
\cite{zucker1} and \cite{zucker2}). 
I think that he could write \cite{zucker3} without 
any difficulties. 
It seems that he is the first one who directly applies Hodge theory to 
obtain semipositivity results like Theorem \ref{f-thm3.1}, that is, 
semipositivity results for nonsmooth morphisms. 

Independently, 
Kawamata obtained the following semipositivity theorem 
in \cite{kawamata-chara} by using Schmid's paper \cite{schmid}. 
His result is: 

\begin{thm}[{\cite[Theorem 5]{kawamata-chara}}]\label{f-thm3.5} 
Let $f:X\to Y$ be a surjective morphism between smooth projective 
varieties with connected fibers which satisfies the following conditions: 
\begin{itemize}
\item[(i)] There is a Zariski open dense subset $Y_0$ of $Y$ such that 
$D=Y\setminus Y_0$ is a simple normal crossing divisor on $Y$. 
\item[(ii)] Put $X_0=f^{-1}(Y_0)$ and $f_0=f|_{X_0}$. Then 
$f_0$ is smooth. 
\item[(iii)] The local monodromies 
of $R^nf_{0*}\mathbb C_{X_0}$ around $D$ are unipotent, 
where $n=\dim X-\dim Y$. 
\end{itemize} 
Then $f_*\omega_{X/Y}$ is a locally free sheaf and nef. 
\end{thm}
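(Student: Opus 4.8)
The plan is to run Kawamata's original Hodge-theoretic argument from \cite{kawamata-chara}, whose two inputs are Schmid's asymptotic analysis of degenerating variations of Hodge structure \cite{schmid} and Griffiths's curvature computations \cite{griffiths}. First I would set up the variation of Hodge structure carried by $R^nf_{0*}\mathbb C_{X_0}$ on $Y_0$: it is polarized of weight $n$, and relative Serre duality identifies the bottom piece $F^n=H^{n,0}$ of its Hodge filtration with $f_{0*}\omega_{X_0/Y_0}$, the fibre at $y\in Y_0$ being $H^0(X_y,\omega_{X_y})$. By hypothesis (iii) the local monodromies around the components of $D$ are unipotent, so Deligne's canonical extension $\overline{\mathcal H}^n$ of $\mathcal H^n=R^nf_{0*}\mathbb C_{X_0}\otimes\mathcal O_{Y_0}$ across $D$ (see \cite{deligne}) has nilpotent, indeed zero, residues, and $F^n$ extends to a locally free subsheaf $\overline{F}^n\subset\overline{\mathcal H}^n$ which is a subbundle.

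The first substantial step is the identification $f_*\omega_{X/Y}\simeq\overline{F}^n$. Writing $j\colon Y_0\hookrightarrow Y$, restriction gives a map $f_*\omega_{X/Y}\to j_*f_{0*}\omega_{X_0/Y_0}=j_*F^n$, and one must show its image is precisely the canonical extension $\overline{F}^n$. This is a local statement near $D$: choosing polydisc coordinates in which $D=\{t_1\cdots t_r=0\}$, one compares a local frame of $f_*\omega_{X/Y}$ with a frame of $\overline{F}^n$ using Schmid's nilpotent orbit theorem and the norm estimates for the Hodge metric, together with the unipotency of the monodromy; this is where hypotheses (i)--(iii) are genuinely used. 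Granting this, $f_*\omega_{X/Y}$ is locally free, which is the first assertion of the theorem.

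For nefness, on $Y_0$ the polarization induces a Hodge metric on $F^n=H^{n,0}$ whose curvature is semipositive in the sense of Griffiths \cite{griffiths}. To deduce nefness of $f_*\omega_{X/Y}=\overline{F}^n$ on all of $Y$ I would reduce to curves: by Bertini a general complete intersection curve $C\subset Y$ is smooth, meets $D$ transversally in finitely many points, has $C\cap Y_0$ dense, $X_C:=f^{-1}(C)$ is smooth, and the restricted family $f_C\colon X_C\to C$ again has unipotent local monodromies (the monodromy around a transversal puncture is one of the original local monodromies). Since canonical extension commutes with such a restriction, the identification of the previous paragraph applied on $C$ gives $(f_*\omega_{X/Y})|_C\simeq\overline{F}^n|_C\simeq f_{C*}\omega_{X_C/C}$ (alternatively one invokes Koll\'ar's torsion-freeness, Theorem \ref{f-thm2.5}(i), to control base change over a general $C$). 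Now either invoke Fujita's Theorem \ref{f-thm3.1} directly for $f_C$, or argue by hand: every quotient line bundle $\mathcal Q$ of $f_{C*}\omega_{X_C/C}$ inherits a quotient metric with semipositive curvature over $C\cap Y_0$, and by Schmid's estimates this metric is ``good'' in Mumford's sense at the punctures, so $\deg_C\mathcal Q=\int_Cc_1(\mathcal Q,h)\ge 0$. Since nefness of a locally free sheaf on a projective variety can be tested on curves, this yields the nefness of $f_*\omega_{X/Y}$.

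The hardest part is the identification $f_*\omega_{X/Y}\simeq\overline{F}^n$, i.e.\ pinning down the behaviour of $f_*\omega_{X/Y}$ along the singular fibres over $D$; this requires the full strength of Schmid's nilpotent orbit and $\mathrm{SL}_2$-orbit theorems, and is exactly where the unipotency assumption (iii) cannot be dropped. A secondary technical point is making the implication ``semipositively curved good metric $\Rightarrow$ nonnegative degree of all quotient line bundles on a curve'' rigorous, again controlled by Schmid's norm estimates. By contrast, the stability of hypotheses (i)--(iii) under restriction to a general curve and the reduction of nefness to curves are routine.
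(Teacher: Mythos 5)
The paper does not prove Theorem \ref{f-thm3.5} at all: it is a survey, the result is cited from Kawamata's paper, and --- importantly --- the text immediately after the statement warns that ``the proof of Theorem \ref{f-thm3.5} in \cite{kawamata-chara} seems to be insufficient when $\dim Y\geq 2$,'' pointing to \cite[4.6.~Remarks]{ffs} and noting that the complete arguments in \cite{fujino-fujisawa} and \cite{ffs} require Hodge-theoretic input (\cite{cattani-kaplan}, \cite{cks}) obtained only after Kawamata's paper. What you have written is essentially a reconstruction of Kawamata's original argument, so it inherits exactly the gap the paper is flagging.

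The concrete problem is in your nefness step. Nefness of the locally free sheaf $f_*\omega_{X/Y}$ means that for \emph{every} morphism $g\colon C\to Y$ from a smooth projective curve, every quotient line bundle of $g^*f_*\omega_{X/Y}$ has nonnegative degree; it is not enough to test general complete intersection curves. When $\dim Y\geq 2$ there are curves contained entirely in $D$ (and curves meeting $D$ non-transversally or passing through the singular strata of $D$), and for these your argument says nothing: the restriction of the canonical extension $\overline{F}^n$ to a stratum of $D$ is not governed by the polarized variation of Hodge structure on $Y_0$, and Schmid's one-variable norm estimates do not control the Hodge metric there. Making this work requires the several-variable asymptotic analysis of degenerating Hodge metrics (Cattani--Kaplan, Cattani--Kaplan--Schmid), or Saito's theory of Hodge modules, which is precisely the route taken in \cite{fujino-fujisawa} and \cite{ffs}. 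A secondary point needing the same machinery is your assertion that the canonical extension, and the subbundle $\overline{F}^n\subset\overline{\mathcal H}^n$, behave well under restriction to curves transversal to $D$; in several variables this is not a formal consequence of the one-variable nilpotent orbit theorem. Your argument is complete only in the case $\dim Y=1$, where it amounts to Fujita's Theorem \ref{f-thm3.1} and there are no curves other than $Y$ itself to test.
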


However, the proof of Theorem \ref{f-thm3.5} in 
\cite{kawamata-chara} seems to be insufficient when 
$\dim Y\geq 2$. We do not repeat the comments 
on the troubles in \cite{kawamata-chara} here. 
For the details, see the comments 
in \cite[4.6.~Remarks]{ffs}. 
Fortunately, we have some 
generalizations of Theorem \ref{f-thm3.5} in \cite{fujino-higher}, 
\cite{fujino-fujisawa}, and 
\cite{ffs} (see, for example, Theorem \ref{f-thm3.6} below). 
The proofs in \cite{fujino-fujisawa} and \cite{ffs} 
are independent of Kawamata's arguments in \cite{kawamata-chara}. 
Note that our arguments in \cite{fujino-fujisawa} and 
\cite{ffs} need some results on Hodge theory obtained 
after the publication of Kawamata's paper \cite{kawamata-chara} 
(see, for example, \cite{cattani-kaplan}, and \cite{cks}). 
Kawamata could and did use 
only \cite{deligne}, \cite{griffiths}, and 
\cite{schmid} on Hodge theory when he wrote \cite{kawamata-chara}. 
Although I sometimes called Theorem \ref{f-thm3.5} the Fujita--Kawamata 
semipositivity theorem (see, for example, \cite{fujino-fujisawa}), 
it is probably not accurate. 
It is more appropriate to call it the Fujita--Zucker--Kawamata 
semipositivity theorem. 
I apologize for suppressing Zucker's contribution \cite{zucker3}. 

Theorem \ref{f-thm3.5} follows from the theory of 
polarizable 
variation of {\em{pure}} Hodge structure. 
It is natural to consider {\em{mixed}} generalizations of Theorem \ref{f-thm3.5}. 
We have already seen that 
mixed Hodge structures on cohomology with compact support 
are very useful (see Section \ref{f-sec2}). 
So, we consider their variations and prove 
some powerful generalizations of Theorem \ref{f-thm3.5}, 
which depend on the theory of gradedly polarizable 
admissible variation of {\em{mixed}} Hodge structure (see, 
for example, \cite{steenbrink-zucker}, and \cite{kashiwara}). 
We have: 

\begin{thm}[Semipositivity theorem]\label{f-thm3.6}
Let $(X, D)$ be a simple normal crossing pair such that 
$D$ is reduced and let $f:X\to Y$ be 
a projective surjective morphism onto a smooth 
complete algebraic variety $Y$.
Assume that every stratum of $(X,D)$ is dominant onto $Y$. 
Let $\Sigma$ be a simple normal crossing divisor on $Y$ such that 
every stratum of $(X, D)$ is smooth over $Y^*=Y\setminus \Sigma$. 
Then $R^pf_*\omega_{X/Y}(D)$ is locally free for every $p$. 
We put $X^*=f^{-1}(Y^*)$,
$D^*=D|_{X^*}$, and $d=\dim X-\dim Y$. 
We further assume that 
all the local monodromies on 
$R^{d-i}(f|_{X^*\setminus D^*})_!\mathbb Q_{X^* \setminus D^*}$ 
around $\Sigma$ 
are unipotent. 
Then we obtain that
$R^if_*\omega_{X/Y}(D)$ is 
a nef locally free sheaf on $Y$. 
\end{thm}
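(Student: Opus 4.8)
The plan is to translate both assertions into statements about a variation of mixed Hodge structure over $Y^*$ and then extend them across $\Sigma$. Set $U^*=X^*\setminus D^*$ and let $g\colon U^*\to Y^*$ be the induced morphism, which is smooth because every stratum of $(X,D)$ is smooth over $Y^*$. Applying the $E_1$-degeneration of \ref{f-say2.9} and the Serre-duality description in Remark \ref{f-rem2.10} fibrewise, together with base change over $Y^*$, one identifies $R^if_*\omega_{X/Y}(D)|_{Y^*}$ with the bottom piece $F^d$ ($d=\dim X-\dim Y$) of the Hodge filtration of a graded-polarizable admissible variation of mixed Hodge structure $\mathbb W$ on $Y^*$, namely the variation carried by $R^{d+i}g_*\mathbb Q_{U^*}$ (equivalently, by Poincar\'e--Lefschetz duality on the $d$-dimensional fibres of $g$, dual up to a Tate twist to the variation carried by $R^{d-i}g_!\mathbb Q_{U^*}$). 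Thus the unipotency hypothesis in the theorem is exactly unipotency of the local monodromies of $\mathbb W$ around $\Sigma$, and admissibility of $\mathbb W$ along $\Sigma$ is automatic from its geometric origin (Kashiwara, extending Steenbrink--Zucker; see \cite{kashiwara} and \cite{steenbrink-zucker}).

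For the local freeness of $R^pf_*\omega_{X/Y}(D)$ on all of $Y$, I would use that the Hodge bundles of the canonical (Deligne) extension $\overline{\mathbb W}$ of an admissible variation are locally free, so the bottom Hodge piece $\mathcal F$ of $\overline{\mathbb W}$ is a locally free sheaf on $Y$ agreeing with $R^pf_*\omega_{X/Y}(D)$ over $Y^*$. One then compares $\mathcal F$ and $R^pf_*\omega_{X/Y}(D)$ globally: the logarithmic structure along $\Sigma$ produces a natural map between them, which is an isomorphism over $Y^*$ by construction. By Theorem \ref{f-thm2.13}(i) applied with $L=K_X+D$, so that $L-(K_X+D)=0$ is $f$-semi-ample, together with the hypothesis that every stratum of $(X,D)$ dominates $Y$, the sheaf $R^pf_*\omega_{X/Y}(D)$ is torsion-free (its only associated point is the generic point of $Y$). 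Torsion-freeness on one side, local freeness on the other, and a local analysis of the filtered logarithmic de Rham complex $\Omega^\bullet_X(\log D)(-D)$ near $\Sigma$ then force the comparison map to be an isomorphism. No assumption on the monodromy enters this part; in practice one follows \cite{fujino-fujisawa}.

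For the nefness of $\mathcal E:=R^if_*\omega_{X/Y}(D)$, recall that a locally free sheaf on a projective variety is nef if and only if its pullback along every morphism from a smooth projective curve is nef; after base change and an auxiliary modification keeping the simple normal crossing hypotheses in force, this reduces us to $\dim Y=1$. Over the curve, $\mathcal E$ is the bottom Hodge piece of the canonical extension of $\mathbb W$. Filtering by the weight filtration of $\mathbb W$ --- which, by strictness of $F$ with respect to $W$ and by admissibility, lifts to a filtration of $\mathcal E$ by locally free subsheaves with locally free quotients whose graded pieces are the corresponding bottom Hodge bundles attached to the pure variations $\mathrm{gr}^W_k\mathbb W$ --- and using that nefness of locally free sheaves is stable under extensions, we reduce to the bottom Hodge bundle of a \emph{polarizable variation of pure Hodge structure with unipotent local monodromies}. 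This last case is the Fujita--Zucker--Kawamata situation: Griffiths' curvature computation (see \cite{griffiths}) shows that the Hodge metric on this bundle over $Y^*$ has semipositive curvature, and Schmid's norm estimates together with their refinement by Cattani--Kaplan--Schmid (see \cite{schmid} and \cite{cks}), combined with unipotency, show that the metric degenerates along $\Sigma$ mildly enough that the canonical extension is nef. Theorem \ref{f-thm3.5} is precisely the prototype of this last step, with the pure variation being the one carried by the middle relative cohomology.

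The main obstacle is the first step when $\dim Y>1$: proving that $R^pf_*\omega_{X/Y}(D)$ is locally free and coincides with the canonical extension of the Hodge bundle on all of $Y$, rather than merely over $Y^*$. This does not follow formally; it is here that one genuinely needs the injectivity and torsion-freeness package (Theorem \ref{f-thm2.12} and Theorem \ref{f-thm2.13}) and a careful local study along $\Sigma$ of the filtered logarithmic de Rham complex --- equivalently, of the admissible variation and its canonical extension. The boundary analysis in the nefness step is also delicate, but once the pure case and the reduction to curves are available it is by now standard asymptotic Hodge theory; see \cite{fujino-fujisawa} and \cite{ffs} for the complete arguments.
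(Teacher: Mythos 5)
The paper itself contains no proof of Theorem \ref{f-thm3.6}: it records that the theorem was first obtained in \cite{fujino-fujisawa} via gradedly polarizable admissible variations of mixed Hodge structure and reproved in \cite{ffs} via mixed Hodge modules. Your outline is a faithful reconstruction of the first route: the identification of $R^if_*\omega_{X/Y}(D)|_{Y^*}$ with the deepest Hodge piece $F^d$ of the admissible variation carried by $R^{d+i}g_*\mathbb Q_{U^*}$ (dual to $R^{d-i}g_!\mathbb Q_{U^*}$, which matches the monodromy hypothesis), the use of Theorem \ref{f-thm2.13}~(i) with $L=K_X+D$ and the dominance of all strata to get torsion-freeness, the comparison with the canonical extension for local freeness, and the weight-filtration d\'evissage to the pure Fujita--Zucker--Kawamata case for nefness are all the right ingredients, and you correctly observe that $F^d(\mathrm{Gr}^W_k)$ is again the deepest piece of each pure graded quotient, so the extension argument applies.

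There is, however, one step that is false as stated and happens to sit exactly on the historical fault line: the claim that testing nefness on curves ``reduces us to $\dim Y=1$.'' Nefness must be tested against \emph{every} morphism $\nu:C\to Y$ from a smooth projective curve, including those with $\nu(C)\subseteq\Sigma$; for such curves there is no base change of the family, nor any restriction of the variation of mixed Hodge structure, to which the one-dimensional case of the theorem (or Schmid's one-variable norm estimates) could be applied, and no ``auxiliary modification'' keeps the hypotheses in force because the whole of $C$ would lie in the boundary. This is precisely why the paper warns that the proof of Theorem \ref{f-thm3.5} in \cite{kawamata-chara} is insufficient when $\dim Y\geq 2$ (see \cite[4.6.~Remarks]{ffs}) and why \cite{fujino-fujisawa} and \cite{ffs} must invoke the several-variable degeneration theory of \cite{cattani-kaplan} and \cite{cks}: one needs to control the canonical extension of $F^d$ along the closed strata of $\Sigma$ (via the several-variable nilpotent orbit and $\mathrm{SL}_2$-orbit theorems, or an induction over the strata using limit mixed Hodge structures), not merely over $Y^*$ and over curves meeting $Y^*$. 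You cite \cite{cks}, but you deploy it only in the curve case, where the one-variable theory of \cite{schmid} already suffices; the place it is genuinely needed is the one your reduction skips. The local-freeness comparison between $R^pf_*\omega_{X/Y}(D)$ and the canonical extension is likewise the technical heart of \cite{fujino-fujisawa} rather than a formal consequence of torsion-freeness plus generic agreement, but there you flag the issue honestly and point to the correct source.
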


For the definitions and the notation used in Theorem \ref{f-thm3.6}, 
see \ref{f-say6.6} and \ref{f-say6.7} in Section \ref{f-sec6}. 
Theorem \ref{f-thm3.6} was first obtained in \cite{fujino-fujisawa}. 
Then, we gave an alternative proof of 
Theorem \ref{f-thm3.6} based on Saito's theory of mixed Hodge 
modules (see \cite{saito1}, \cite{saito2}, and \cite{saito3}) in \cite{ffs}. 
As an application of Theorem \ref{f-thm3.6}, 
we establish the projectivity of various moduli spaces 
(for the details, see \cite{fujino-slc}, 
\cite{fujino-semipositivity}, \cite{kovacs-p}, etc.). 
For a new approach, see \cite{fujino-vani-semi}.  
In the introduction of \cite{fujita-kahler}, Fujita wrote: 
\begin{quote}
Perhaps our result is closely related with 
the problem about the (quasi-)projectivity 
of moduli spaces. 
Of course, however, the relation will not be simple. 
\end{quote}

Now we know that generalizations of Fujita's semipositivity theorem 
(see Theorem \ref{f-thm3.1} and Theorem \ref{f-thm3.6}) with Viehweg's 
covering arguments (see \cite{viehweg1} and \cite{viehweg2}) 
are useful for the projectivity 
of coarse moduli spaces of stable (log-)varieties 
(see, for example, \cite{kollar-projectivity}, \cite{fujino-semipositivity}, 
and \cite{kovacs-p}). 

Anyway, by Theorem \ref{f-thm3.5}, 
we have: 
\begin{thm}[Fujita, Zucker, Kawamata, $\cdots$]
\label{f-thm3.7} Let $f:X\to Y$ be a surjective morphism between 
smooth projective varieties with connected fibers. 
Then there exists a generically finite morphism $\tau:Y'\to Y$ from 
a smooth projective variety $Y'$ with 
the following property. 
Let $X'$ be any resolution of the main component of $X\times _Y Y'$. 
Then $f'_*\omega_{X'/Y'}$ is a nef locally free sheaf, 
where $f'$ is the composite $X'\to X\times _Y Y'\to Y'$. 
\end{thm}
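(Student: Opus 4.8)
The plan is to reduce Theorem \ref{f-thm3.7} to Theorem \ref{f-thm3.5} by a unipotent reduction in the spirit of Kawamata. First I would prepare the morphism: by Hironaka's resolution and elimination of indeterminacy I may freely blow up $X$ and $Y$ (replacing $X$ by a further resolution changes nothing, by the birational invariance explained below) so that there is a simple normal crossing divisor $\Sigma\subset Y$ with $f$ smooth over $Y_0=Y\setminus\Sigma$ and $f^{-1}(\Sigma)$ a simple normal crossing divisor on $X$. Write $f_0=f|_{f^{-1}(Y_0)}$ and $n=\dim X-\dim Y$. By the local monodromy theorem (Borel--Landman--Grothendieck), the local monodromies of the polarizable variation of Hodge structure $R^nf_{0*}\mathbb C_{f^{-1}(Y_0)}$ around each irreducible component of $\Sigma$ are quasi-unipotent.

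Next I would invoke Kawamata's covering lemma: there is a generically finite morphism $\tau\colon Y'\to Y$ from a smooth projective variety $Y'$, ramified only over $\Sigma$ and to a sufficiently divisible order along each component, such that $\Sigma'=(\tau^{-1}\Sigma)_{\mathrm{red}}$ is a simple normal crossing divisor on $Y'$ and all the local monodromies of the pulled-back variation around the components of $\Sigma'$ become unipotent (a monodromy $T$ with ramification index $m$ is replaced by $T^m$, which is unipotent once $m$ kills the finite part of $T$). If a further blow-up of $Y'$ is needed to restore the simple normal crossing condition, this is harmless: along a normal crossing the operation $T\mapsto T^m$ preserves unipotency.

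Then I would verify the hypotheses of Theorem \ref{f-thm3.5} for $f'\colon X'\to Y'$ and the divisor $\Sigma'$, where $X'$ is any resolution of the main (unique dominant) component of $X\times_YY'$. Over $Y'\setminus\Sigma'$, which maps into $Y_0$, the base change $X\times_YY'\to Y'$ is the pullback of $f_0$, hence already smooth with connected fibers; therefore $X'$ coincides with it there and $f'_0:=f'|_{f'^{-1}(Y'\setminus\Sigma')}$ is smooth with connected fibers. Moreover $R^nf'_{0*}\mathbb C$ is the pullback of $R^nf_{0*}\mathbb C$, so its local monodromies around $\Sigma'$ are unipotent. Theorem \ref{f-thm3.5} then gives that $f'_*\omega_{X'/Y'}$ is a nef locally free sheaf. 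Finally, the sheaf is independent of the chosen resolution: given two resolutions of the main component, dominate them by a third $X''$ with $\mu\colon X''\to X'$ proper birational between smooth varieties; since $\mu_*\omega_{X''}=\omega_{X'}$, the projection formula yields $f''_*\omega_{X''/Y'}\cong f'_*\omega_{X'/Y'}$.

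The main obstacle is the second step: arranging that a generically finite base change makes all local monodromies unipotent while keeping $Y'$ smooth and the discriminant simple normal crossing. This is precisely Kawamata's covering lemma combined with quasi-unipotency of monodromy; the delicate points are to ensure that the resolution of singularities needed to put $Y'$ and $\tau^{-1}\Sigma$ into normal crossing position does not destroy unipotency, and to work throughout with the main component of the (possibly reducible) fibre product $X\times_YY'$ rather than with the whole fibre product.
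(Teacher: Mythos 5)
Your proposal is correct and follows essentially the same route as the paper, which deduces Theorem \ref{f-thm3.7} from Theorem \ref{f-thm3.5} via exactly this unipotent reduction (Kawamata's covering trick applied after arranging a simple normal crossing discriminant), together with the standard birational invariance $\mu_*\omega_{X''}=\omega_{X'}$ to handle the choice of resolution.
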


Theorem \ref{f-thm3.7} has already played crucial roles 
in the study of higher-dimensional algebraic varieties. 
For some geometric applications, we have to 
treat $f_*\omega^{\otimes m}_{X/Y}$ or 
$f'_*\omega^{\otimes m}_{X'/Y'}$ with $m\geq 2$ (see Section \ref{f-sec4}). 
Thus we have: 

\begin{conj}[Semipositivity of direct images of relative 
pluricanonical 
bundles]\label{f-conj3.8}
Let $f:X\to Y$ be a surjective morphism between 
smooth projective varieties with connected fibers. 
Then there exists a generically finite morphism 
$\tau:Y'\to Y$ from a smooth projective variety $Y'$ 
with the following property. 
Let $X'$ be any resolution of the main component of $X\times _Y Y'$ 
sitting in the following commutative diagram: 
$$
\xymatrix{X' \ar[r]\ar[d]_{f'}& X\ar[d]^f\\
Y'\ar[r]_{\tau} &Y. 
}
$$ 
Then $f'_*\omega^{\otimes m}_{X'/Y'}$ is a 
nef locally free sheaf for every positive integer $m$. 
\end{conj}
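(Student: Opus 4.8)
The plan is to deduce Conjecture~\ref{f-conj3.8} from the semipositivity theorem (Theorem~\ref{f-thm3.6}) by combining the minimal model program with Viehweg's covering constructions. The generically finite $\tau:Y'\to Y$ will be built in several stages and will be asked to do three things at once: to put $f$, after base change and resolution, into a form admitting a relative good (minimal or canonical) model with mild singularities; to arrange that the geometric input required by Theorem~\ref{f-thm3.6} is available, namely an embedded simple normal crossing pair $(W,\Delta)$ all of whose strata dominate the base and which is smooth over the complement of a fixed simple normal crossing divisor $\Sigma\subset Y'$; and to trivialise the relevant local monodromies, so that the nefness half of Theorem~\ref{f-thm3.6} applies. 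Note that $f'_*\omega^{\otimes m}_{X'/Y'}$ does not depend on the choice of resolution $X'$ (any two are dominated by a third and pluricanonical pushforwards are birationally invariant), so it suffices to produce one convenient model. We may assume the general fiber $F$ has $\kappa(F)\ge 0$, the case $\kappa(F)=-\infty$ being trivial since then $f_*\omega^{\otimes m}_{X/Y}=0$.

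\emph{Step 1 (a relative good model).} I would first reduce to the situation in which $K_{X/Y}$ is relatively semiample. When $F$ is of general type this follows from the finite generation of the relative canonical ring $\bigoplus_{m\ge 0}f_*\omega^{\otimes m}_{X/Y}$, a consequence of Birkar--Cascini--Hacon--M\textsuperscript{c}Kernan applied fibrewise and spread out over $Y$; one then passes to $g:Z=\operatorname{Proj}_Y\bigoplus_m f_*\omega^{\otimes m}_{X/Y}\to Y$, which is projective with canonical singularities, has $K_{Z/Y}$ $\mathbb Q$-Cartier and $g$-ample, and satisfies $f_*\omega^{\otimes m}_{X/Y}\cong g_*\mathcal O_Z(mK_{Z/Y})$ for every $m$ divisible by a fixed $r$. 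When $F$ is not of general type one would instead pass to the relative Iitaka fibration and apply the canonical bundle formula, reducing by induction on $\dim F$ to the semipositivity of the moduli part, which is itself governed by variations of Hodge structure.

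\emph{Step 2 (the covering trick).} For each $m$, and for $\ell$ suitably divisible, take a general member $B\in|\ell K_{Z/Y}|$ (relatively base-point free over a dense open subset of $Y$, and globalised after a further base change), form the associated cyclic cover to clear denominators, and pass to a common log resolution $h:W\to Z$. Reading off the ramification formula yields an embedded simple normal crossing pair $(W,\Delta)$ with $\Delta$ an effective $\mathbb Q$-divisor of coefficients in $[0,1]$, together with a projective surjective morphism $W\to Y$, such that $g_*\mathcal O_Z(mK_{Z/Y})$---equivalently $f_*\omega^{\otimes m}_{X/Y}$---is a direct summand of $(W\to Y)_*\omega_{W/Y}(\Delta)$; this is the standard Esnault--Viehweg device. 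The care required here is to arrange, after shrinking $Y$ and a further base change, that every stratum of $(W,\Delta)$ dominates $Y'$ and is smooth over $Y'\setminus\Sigma$. \emph{Step 3} is then to invoke Theorem~\ref{f-thm3.6}: its first assertion gives local freeness of $(W\to Y)_*\omega_{W/Y}(\Delta)$, hence of its direct summand, which is the local freeness part of the conjecture; and after one more base change, by Kawamata's covering lemma, making the relevant local monodromies around $\Sigma$ unipotent, the second assertion gives nefness, and a direct summand of a nef locally free sheaf is again nef.

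The main obstacle is uniformity in $m$, together with the globalisation issues hidden in Steps~1 and~2. The conjecture demands a \emph{single} $\tau$ valid for all $m$, whereas the cyclic covers of Step~2, and with them the local systems whose monodromy must be made unipotent, vary with $m$; the way around this ought to be to carry out a semistable-type reduction once, for the fixed geometry $Z\to Y$ together with a branch divisor realising a fixed multiple $rK_{Z/Y}$, and to check that all the higher cyclic covers inherit unipotent monodromy---verifying this is the delicate heart of the argument. Controlling the relative canonical model and the auxiliary covers over the discriminant of $f$, so that the simple normal crossing hypotheses of Theorem~\ref{f-thm3.6} genuinely hold over all of $Y'$ rather than only generically, is the other source of difficulty; and the reliance on the minimal model program throughout (including, in the non--general-type case, the abundance-type input packaged in the canonical bundle formula) is precisely why a purely Hodge-theoretic, MMP-free proof---as hoped for in Subsection~\ref{f-subsec3.1}---seems presently out of reach.
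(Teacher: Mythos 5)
The statement you are proving is stated in the paper as a \emph{conjecture}, and the paper itself only establishes it conditionally: Theorem \ref{f-thm3.22} proves Conjecture \ref{f-conj3.8} under the extra hypothesis that the geometric generic fiber of $f$ has a good minimal model, via weak semistable reduction (Abramovich--Karu), passage to a relative good minimal model, Nakayama's local freeness over curves (Lemma \ref{f-lem3.16}), and then nefness from the Popa--Schnell theorem combined with Viehweg's fiber product trick (Theorems \ref{f-thm3.15} and \ref{f-thm3.17}). Your proposal cannot close the conjecture because your Step 1, in the case $0\le\kappa(F)<\dim F$, silently assumes exactly the open input: ``pass to the relative Iitaka fibration and apply the canonical bundle formula, reducing by induction'' is an appeal to abundance-type statements (existence of good minimal models for the fibers, or positivity of the moduli part strong enough to control \emph{all} plurigenera relatively), none of which is currently known. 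In the general type case BCHM does give you the relative canonical model $Z\to Y$, so that part is fine, but the reduction is not available unconditionally.

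The second genuine gap is in Steps 2--3. The cyclic covering device realizes $f_*\omega^{\otimes m}_{X/Y}$ as a direct summand of $(W\to Y)_*\omega_{W/Y}(\Delta)$ only after choosing a member $B\in|\ell K_{Z/Y}|$ that is relatively free, and such a $B$ exists (and the resulting pair $(W,\Delta)$ has the required structure --- all strata dominant onto $Y$, smooth over $Y\setminus\Sigma$) only over a dense open subset of $Y$. No base change $\tau:Y'\to Y$ repairs this over the discriminant, because the bad locus lives downstairs on $Y$ and is not removed by generically finite covers. This is precisely why Viehweg's argument along these lines yields \emph{weak positivity} (nefness over a big open set) rather than nefness, and why Theorem \ref{f-thm3.6} cannot be invoked globally on $Y'$; you flag this difficulty yourself but do not resolve it, and it is the heart of the matter rather than a technicality. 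Your hoped-for uniformity in $m$ of the unipotent reduction is a third unresolved point. In short: the proposal is a reasonable sketch of the classical Viehweg-style strategy, but it proves at most weak positivity in the general type case and assumes open conjectures otherwise; the paper's own (conditional) proof goes by a genuinely different route through the minimal model program and Koll\'ar-type vanishing, not through Theorem \ref{f-thm3.6}.
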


Note that the local freeness of $f'_*\omega^{\otimes m}_{X'/Y'}$ 
for $m\geq 2$ in Conjecture \ref{f-conj3.8} 
is highly nontrivial even when $f'$ is a smooth projective 
morphism. 
The following theorem by Siu (see \cite{siu}) 
is nontrivial for $m\geq 2$ and 
can be proved only by using $L^2$ methods. 
For a simpler proof, see \cite{paun}. 

\begin{thm}[Siu]\label{f-thm3.9}
Let $f:X\to Y$ be a smooth projective morphism 
between smooth quasiprojective varieties with connected fibers. 
Then $f_*\omega^{\otimes m}_{X/Y}$ is locally free for every nonnegative integer $m$. 
\end{thm}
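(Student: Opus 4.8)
The plan is to deduce the statement from Grauert's theorem on cohomology and base change, reducing it to the invariance of the plurigenera $P_m(X_y)=h^0(X_y,\omega_{X_y}^{\otimes m})$ in the smooth projective family $f$. Since $f$ is smooth, every fiber $X_y$ is a smooth projective variety of dimension $d=\dim X-\dim Y$, the sheaf $\omega_{X/Y}^{\otimes m}$ is flat over $Y$, and $\omega_{X/Y}^{\otimes m}|_{X_y}\cong\omega_{X_y}^{\otimes m}$ by adjunction; because $Y$ is reduced, $f_*\omega_{X/Y}^{\otimes m}$ is then locally free as soon as
$$
\varphi_m\colon Y\longrightarrow\mathbb Z_{\geq 0},\qquad y\longmapsto h^0\bigl(X_y,\omega_{X_y}^{\otimes m}\bigr)
$$
is locally constant. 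For $m=0$ this is clear, since $f_*\mathcal O_X=\mathcal O_Y$ as $f$ is proper with connected fibers; for $m=1$ the function $\varphi_1(y)=h^{d,0}(X_y)$ is constant by the constancy of Hodge numbers in smooth projective families (which follows from the $E_1$-degeneration recalled in \ref{f-say2.7}, applied fiberwise, together with Ehresmann's theorem and upper semicontinuity). So assume $m\geq 2$. As $\varphi_m$ is upper semicontinuous, it suffices to prove the opposite inequality locally: for every $y_0\in Y$ and every $s\in H^0(X_{y_0},\omega_{X_{y_0}}^{\otimes m})$, after shrinking $Y$ around $y_0$ there is a section of $\omega_{X/Y}^{\otimes m}$ over $X$ restricting to $s$ on the central fiber $X_0:=X_{y_0}$.

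Fix such $y_0$ and shrink $Y$ to a small Stein neighborhood. I would write $\omega_{X/Y}^{\otimes m}=K_{X/Y}+(m-1)K_{X/Y}$ and regard the extension of $s$ as an instance of the Ohsawa--Takegoshi $L^2$ extension theorem, extending the canonical section $s$ (twisted by $(m-1)K_{X/Y}$) from the smooth hypersurface $X_0\subset X$. To run that theorem one needs a (possibly singular) hermitian metric $h$ on $(m-1)K_{X/Y}$ in a neighborhood of $X_0$ whose curvature current is semipositive and for which $\int_{X_0}|s|^2_{h}<\infty$. The desired extension of $s$ then exists together with an explicit $L^2$ estimate.

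The metric $h$ is built by induction on $m$. Assuming \emph{every} pluricanonical section on $X_0$ of index $<m$ extends to a neighborhood in $X$, one picks a basis of such extended sections of $\omega_{X/Y}^{\otimes(m-1)}$; the metric they define on $(m-1)K_{X/Y}$ has local weight equal to the logarithm of a finite sum of squared moduli of holomorphic functions, hence is plurisubharmonic, so its curvature current is $\geq 0$. Ohsawa--Takegoshi then extends $s$, and a Bergman-kernel argument shows the newly produced extensions again satisfy the induction hypothesis one level higher. To initialize the induction one first twists everything by a sufficiently ample line bundle $A$ on $X$ equipped with a smooth strictly positive metric, proves the extension statement for $\omega_{X/Y}^{\otimes k}\otimes A$, and afterwards removes $A$ by a twisting-and-limiting (diagonal) argument that uses the uniformity of the constants in Ohsawa--Takegoshi; this last step is P\u{a}un's simplification of Siu's original argument.

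The main obstacle is the compatibility between the section $s$ to be extended and the induced singular metric $h$: since $h$ is assembled from pluricanonical sections, it acquires singularities — heuristically logarithmic poles along the common base locus of those sections — and one must verify that $s$ vanishes to sufficiently high order there for $\int_{X_0}|s|^2_{h}<\infty$, equivalently that $s$ lies in the multiplier ideal of $h|_{X_0}$. This is exactly why the induction hypothesis must be the strong statement that \emph{all} sections extend: it forces $h|_{X_0}$ to be dominated by the metric of minimal singularities of $(m-1)K_{X_0}$, against which — by a standard property of adjoint bundles — every section of $K_{X_0}+(m-1)K_{X_0}=\omega_{X_0}^{\otimes m}$ is automatically $L^2$. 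Controlling this domination, and keeping the $L^2$ constants uniform as the auxiliary bundle $A$ is removed, is the delicate technical heart of the proof and the reason some form of the Ohsawa--Takegoshi theorem seems indispensable when $m\geq 2$. Once $\varphi_m$ is shown to be locally constant, Grauert's theorem gives that $f_*\omega_{X/Y}^{\otimes m}$ is locally free (and compatible with base change), completing the proof; an alternative route through the minimal model program is available but, as noted above, considerably more involved.
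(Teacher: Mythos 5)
The paper itself gives no proof of Theorem \ref{f-thm3.9}: it only records that the result ``is a clever application of the Ohsawa--Takegoshi $L^2$ extension theorem'' and refers to \cite{siu} and \cite{paun}. Your sketch is a faithful outline of exactly that argument (in P\u{a}un's one-tower form): reduction via Grauert to deformation invariance of plurigenera, the inductive construction of a semipositive singular metric on $(m-1)K_{X/Y}$ from already-extended sections, the auxiliary ample twist and its removal by a limiting argument, and the $L^2$ compatibility of $s$ with the induced metric --- so it matches the approach the paper points to. The one imprecision worth flagging is that for $\dim Y>1$ the fiber $X_0$ is not a hypersurface in $X$, so Ohsawa--Takegoshi does not apply to it as written; the standard remedy is to prove the extension statement over a one-dimensional disc and then deduce local constancy of the upper semicontinuous function $\varphi_m$ on all of $Y$ by restricting to curves through each point (or to invoke a higher-codimension version of the extension theorem).
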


Theorem \ref{f-thm3.9} is a clever application of the Ohsawa--Takegoshi 
$L^2$ extension 
theorem. 
We have no Hodge theoretic proofs of Theorem \ref{f-thm3.9}. 
Therefore, we ask: 

\begin{problem}\label{f-prob3.10}
Find a Hodge theoretic proof or an algebraic proof of Theorem \ref{f-thm3.9}. 
\end{problem}

We note: 

\begin{rem}\label{f-rem3.11}
If $Y$ is projective in Theorem \ref{f-thm3.9}, 
then $f_*\omega^{\otimes m}_{X/Y}$ is nef 
for every positive integer $m$ 
by Theorem \ref{f-thm3.17} below. 
Therefore, Conjecture \ref{f-conj3.8} holds true 
when $f:X\to Y$ is smooth. 
\end{rem}

We recommend the reader to see \cite{fujino-fujisawa} and \cite{ffs} for 
the Hodge theoretic aspect of semipositivity theorems discussed in this section. 
Note that the style of \cite{fujino-fujisawa} is the same as 
my other papers, whereas, \cite{ffs} is written in the language of Saito's 
theory of mixed Hodge modules. 

\subsection{New semipositivity theorems using MMP}\label{f-subsec3.1} 

In this subsection, we discuss new semipositivity theorems 
obtained through the use of 
the minimal model program, following \cite{fujino-direct} and 
\cite{fujino-direct-corr}. 

Let us start with the definition of (good) minimal models. 
We recommend the reader to see \ref{f-say6.2} and \ref{f-say6.5} 
in Section \ref{f-sec6} if he is not familiar with 
the minimal model program. 

\begin{defn}[Good minimal models]\label{f-def3.12}
Let $f:X\to Y$ be a projective morphism between normal 
quasiprojective 
varieties. 
Let $\Delta$ be an effective $\mathbb Q$-divisor on $X$ such that 
$(X, \Delta)$ is kawamata log terminal. 
A pair $(X', \Delta')$ sitting in a diagram 
$$
\xymatrix{
X\ar[dr]_{f}\ar@{-->}[rr]^\phi&& X'\ar[dl]^{f'}\\
&Y&
}
$$
is called a minimal model of $(X, \Delta)$ over $Y$ if 
\begin{itemize}
\item[(i)] $X'$ is $\mathbb Q$-factorial, 
\item[(ii)] $f'$ is projective, 
\item[(iii)] $\phi$ is birational and $\phi^{-1}$ has no 
exceptional divisors, 
\item[(iv)] $\phi_*\Delta=\Delta'$, 
\item[(v)] $K_{X'}+\Delta'$ is $f'$-nef, and 
\item[(vi)] $a(E, X, \Delta)<a(E, X', \Delta')$ for every $\phi$-exceptional 
divisor $E$ contained in $X$. 
\end{itemize}
Furthermore, if $K_{X'}+\Delta'$ is $f'$-semiample, 
then $(X', \Delta')$ is called a {\em{good}} minimal model 
of $(X, \Delta)$ over $Y$. 
When $Y$ is a point, we usually omit \lq\lq over $Y$\rq\rq \ in the above definitions;  
we sometimes simply say that $(X', \Delta')$ is a relative 
$($good$)$ minimal model of $(X, \Delta)$. 
\end{defn}

We also need the notion of weakly semistable morphisms 
introduced by Abramovich--Karu (see \cite{abramovich-karu}). 

\begin{defn}[Weakly semistable morphisms]\label{f-def3.13}
Let $f:X\to Y$ be a projective surjective morphism between quasiprojective 
varieties. 
Then $f:X\to Y$ is called weakly semistable if 
\begin{itemize}
\item[(i)] the varieties $X$ and $Y$ admit toroidal structures $(U_X\subset 
X)$ and $(U_Y\subset Y)$ with $U_X=f^{-1}(U_Y)$, 
\item[(ii)] with this structure, the morphism $f$ is toroidal, 
\item[(iii)] the morphism $f$ is equidimensional, 
\item[(iv)] all the fibers of the morphism $f$ are reduced, and 
\item[(v)] $Y$ is smooth. 
\end{itemize} 
It then follows that $X$ has only rational Gorenstein singularities 
(see \cite[Lemma 6.1]{abramovich-karu}). 
Both $(U_X\subset X)$ and $(U_Y\subset Y)$ are toroidal embeddings without 
self-intersection in the sense of \cite[Chapter II, \S1]{kkms}. 
For the details, see \cite{abramovich-karu}. 
\end{defn}

We propose the following conjecture. 

\begin{conj}\label{f-conj3.14} 
Let $f:X\to Y$ be a weakly semistable morphism 
with connected fibers. 
Then $f_*\omega^{\otimes m}_{X/Y}$ is locally free 
for every $m\geq 1$.
\end{conj}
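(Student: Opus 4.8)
The plan is to reduce the conjecture to a deformation-invariance statement for plurigenera and then to run a relative minimal model program, in the spirit of the semipositivity theorem (Theorem \ref{f-thm3.6}). Since $f:X\to Y$ is weakly semistable, $Y$ is smooth and $f$ is equidimensional, so $f$ is flat; each scheme-theoretic fiber $X_y$ is cut out in the Gorenstein variety $X$ by a regular sequence pulled back from local coordinates on $Y$, and hence $X_y$ is Gorenstein and reduced with $\omega_{X_y}\simeq \omega_{X/Y}|_{X_y}$. By Grauert's theorem, applied over the smooth (in particular reduced) variety $Y$, it is therefore enough to prove that
$$
y\longmapsto \dim _{\mathbb C}H^0(X_y,\omega_{X_y}^{\otimes m})
$$
is constant on $Y$ for every $m\geq 1$. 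Over the open locus $Y^{*}\subseteq Y$ on which $f$ is smooth, the open set $X^{*}=f^{-1}(Y^{*})$ is smooth and $f|_{X^{*}}:X^{*}\to Y^{*}$ is a smooth projective morphism with connected fibers, so Siu's theorem (Theorem \ref{f-thm3.9}) shows that $(f|_{X^{*}})_{*}\omega_{X^{*}/Y^{*}}^{\otimes m}$ is locally free; hence the function above is constant over $Y^{*}$, equal to the $m$-th plurigenus of the general fiber of $f$. So the content of the conjecture is that no jump occurs over the degeneration locus $\Sigma=Y\setminus Y^{*}$.

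To control $\Sigma$ I would pass to the minimal model side. First choose a resolution $\mu:X'\to X$; since $X$ has Gorenstein rational---equivalently canonical---singularities, $\mu_{*}\omega_{X'}^{\otimes m}=\omega_X^{\otimes m}$, so $f_{*}\omega_{X/Y}^{\otimes m}=(f\circ\mu)_{*}\omega_{X'/Y}^{\otimes m}$, and after a relative compactification we may assume that $X'$ is a smooth quasiprojective variety carrying a projective surjective morphism $g:X'\to Y$. Next, run a $K_{X'}$-minimal model program over $Y$ to obtain a relative good minimal model $(X'',0)$ of $(X',0)$ over $Y$ in the sense of Definition \ref{f-def3.12}, with structure morphism $g'':X''\to Y$. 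On $X''$ the divisor $K_{X''/Y}$ is relatively semiample, so it induces a relative Iitaka fibration $p:X''\to Z$ over $Y$ with $K_{X''/Y}\sim _{\mathbb Q}p^{*}A$ for a $\mathbb Q$-divisor $A$ on $Z$ that is ample over $Y$. Pushing $\omega_{X''/Y}^{\otimes m}$ forward along $p$ and then down to $Y$, and combining Theorem \ref{f-thm3.6} (which controls the semipositivity of the ``moduli part'' arising from the variation of Hodge structure along the Iitaka fibration) with relative base point freeness and cohomology and base change for the semiample contribution of $A$, one expects $g''_{*}\omega_{X''/Y}^{\otimes m}$ to be locally free; descending this along the birational contraction $X'\dashrightarrow X''$, which does not change canonical rings, one would then recover the required constancy. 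Note that, since a weakly semistable morphism is toroidal, the local monodromies around $\Sigma$ should be automatically unipotent, so the monodromy hypothesis in Theorem \ref{f-thm3.6} is expected to hold on the nose; this is presumably why, in contrast with Conjecture \ref{f-conj3.8}, no generically finite base change on $Y$ is required.

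The step I expect to be the main obstacle is the existence of the relative good minimal model $(X'',0)$ over $Y$: this is a form of the relative abundance conjecture, and it is currently known only in special cases---for instance when the general fiber of $f$ is of general type, where it follows from finite generation and the base point free theorem, or more generally whenever $(X',0)$ is already known to possess a good minimal model over $Y$. A second, genuinely delicate, point is the final fiberwise comparison: weakly semistable morphisms may have reducible and non-normal fibers, so relating $H^0(X_y,\omega_{X_y}^{\otimes m})$ to the pluricanonical cohomology of the fibers of $g''$ requires careful bookkeeping of the effect of the resolution and of the minimal model program on fiber components. An alternative, purely analytic route would bypass the minimal model program entirely: following Siu and P\u{a}un, one would try to prove the deformation invariance of plurigenera directly by extending pluricanonical forms from a fiber $X_0$ across $\Sigma$ by means of the Ohsawa--Takegoshi $L^{2}$ extension theorem, in which case the main difficulty shifts to constructing suitable singular hermitian metrics on $\omega_{X_0}^{\otimes m}$ for the reduced, possibly reducible, fiber $X_0$ and to controlling the extension through the toroidal degeneration.
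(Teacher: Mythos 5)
The statement you are trying to prove is stated in the paper as a \emph{conjecture}, and the paper offers no proof of it: the closest it comes is Theorem \ref{f-thm3.15}, which establishes the conclusion only under the additional hypothesis that the geometric generic fiber of $f$ has a good minimal model. Your proposal correctly isolates the same obstruction --- the existence of a relative good minimal model of a resolution of $X$ over $Y$, i.e.\ a form of relative abundance --- and honestly flags it as the main open step. So you have not proved the conjecture, but neither does the paper; what you have essentially reconstructed is the reduction underlying Theorem \ref{f-thm3.15}. Your preliminary reductions (flatness and reduced Gorenstein fibers from weak semistability, Grauert/base change reducing local freeness to constancy of $y\mapsto h^0(X_y,\omega_{X_y}^{\otimes m})$, Siu's Theorem \ref{f-thm3.9} over the smooth locus, and $\mu_*\omega_{X'}^{\otimes m}=\omega_X^{\otimes m}$ for a resolution of the rational Gorenstein variety $X$) all match the paper's setup, and your observation that toroidality should force unipotent local monodromies, explaining why no base change on $Y$ is needed, is a sound remark.

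Where your route genuinely diverges --- and where it has a gap beyond the abundance obstruction --- is in the step \emph{after} a relative good minimal model $X''\to Y$ is granted. The paper does not pass to the relative Iitaka fibration at all: it cuts $Y$ by general curves through an arbitrary point, uses that weak semistability is preserved under such base changes, invokes Nakayama's Lemma \ref{f-lem3.16} (local freeness of $R^ig_*\mathcal O_V(mK_V)$ over a curve when $V$ is canonical and $K_V$ is $g$-semiample) to get constancy of the fiberwise plurigenera of $\widetilde X\to C$, and concludes by flat base change via the isomorphism $f_*\omega^{\otimes m}_{X/Y}\simeq\widetilde f_*\mathcal O_{\widetilde X}(mK_{\widetilde X/Y})$. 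Your alternative --- writing $K_{X''/Y}\sim_{\mathbb Q}p^*A$ for the relative Iitaka fibration $p:X''\to Z$ and controlling the pushforward via Theorem \ref{f-thm3.6} --- does not go through as stated: $\mathcal O_{X''}(mK_{X''/Y})$ agrees with $p^*\mathcal O_Z(mA)$ only for sufficiently divisible $m$ and only up to a correction governed by the canonical bundle formula, whose moduli part for $m\geq 2$ is not the direct image of a (log) canonical bundle and hence is not controlled by the variation-of-Hodge-structure statement in Theorem \ref{f-thm3.6} (that theorem concerns $R^if_*\omega_{X/Y}(D)$ for simple normal crossing pairs with all strata dominant onto $Y$, not pluricanonical direct images). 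The curve-cutting argument via Lemma \ref{f-lem3.16} avoids this bookkeeping entirely, and is the reason the paper's partial result closes cleanly.
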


By the argument in \cite[Section 4]{fujino-direct} (see also \cite{fujino-direct-corr}), 
we have: 

\begin{thm}[Local freeness]\label{f-thm3.15}
Let $f:X\to Y$ be a weakly semistable morphism with connected fibers. 
Assume that the geometric generic fiber $X_{\overline \eta}$ of $f:X\to Y$ 
has a good minimal model. 
Then $f_*\omega^{\otimes m}_{X/Y}$ is locally free 
for every $m\geq 1$. 
\end{thm}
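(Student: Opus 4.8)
The plan is to reduce the local freeness of $f_*\omega^{\otimes m}_{X/Y}$ to the semipositivity-type machinery already available, exploiting the strong structural hypothesis that $f$ is weakly semistable. Since local freeness is a local question on $Y$, and since $Y$ is smooth and $f$ equidimensional with reduced fibers, I would first aim to show that $f_*\omega^{\otimes m}_{X/Y}$ commutes with base change in a suitable sense, i.e.\ that for every point $y\in Y$ the natural map $f_*\omega^{\otimes m}_{X/Y}\otimes k(y)\to H^0(X_y,\omega^{\otimes m}_{X_y})$ is an isomorphism; by the standard base change and Grauert criteria (using that $Y$ is reduced, indeed smooth), this is equivalent to the function $y\mapsto h^0(X_y,\omega^{\otimes m}_{X_y})$ being locally constant, plus the vanishing of higher direct images in the relevant range, which is where Koll\'ar-type vanishing (Theorem \ref{f-thm2.5} (ii) and Theorem \ref{f-thm2.13}) enters. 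Because $f$ is weakly semistable, each fiber $X_y$ is a reduced Gorenstein variety with the expected dimension $d$, so $\omega_{X_y}$ is a line bundle and the formation of $\omega_{X/Y}$ is compatible with restriction to fibers; this is the feature that makes the pluricanonical case tractable here but hopeless in general.

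Next I would pass to the geometric generic fiber. By hypothesis $X_{\overline\eta}$ has a good minimal model $(X'_{\overline\eta},0)$; running the relative minimal model program in a neighbourhood of the generic point of $Y$ (this is where one invokes the minimal model theoretic input from Section \ref{f-sec5}, in the form of existence of relative good minimal models when the generic fiber has one, cf.\ \cite{fujino-direct}), I would produce, after shrinking $Y$ if necessary and then spreading out, a relative good minimal model $f':X'\to Y$ over the generic point, and then argue that $f'_*\omega^{\otimes m}_{X'/Y}$ agrees with $f_*\omega^{\otimes m}_{X/Y}$ over that open locus — the point being that a minimal model does not change the space of pluricanonical sections. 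On the good minimal model side, $K_{X'}$ is relatively semiample, so $\omega^{\otimes m}_{X'/Y}$ is relatively semiample for suitable $m$, and one can pull in the Fujita--Zucker--Kawamata-type semipositivity and local freeness results (Theorem \ref{f-thm3.6}, applied to an appropriate simple normal crossing model) to conclude that $f'_*\omega^{\otimes m}_{X'/Y}$ is locally free there. The delicate part is to go from ``locally free on a dense open set'' to ``locally free everywhere'': for this I would again use base change together with the unipotent-reduction/semistable-reduction setup, observing that weak semistability already forces the local monodromies to be unipotent (reduced fibers $\Rightarrow$ unipotent monodromy), so the hypotheses of Theorem \ref{f-thm3.6} are met on the nose and the conclusion extends across $\Sigma$.

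Concretely the steps, in order, are: (1) record that weak semistability gives $X$ Gorenstein with rational singularities, $\omega_{X/Y}$ a line bundle, $f$ equidimensional with reduced (hence unipotent-monodromy) fibers, so $\omega^{\otimes m}_{X/Y}$ restricts correctly to fibers; (2) reduce local freeness to constancy of $h^0(X_y,\omega^{\otimes m}_{X_y})$ via cohomology-and-base-change, using Koll\'ar vanishing to kill obstructing higher cohomology; (3) over the generic point, build a relative good minimal model from the good minimal model of $X_{\overline\eta}$ and check that it has the same $f_*\omega^{\otimes m}$; (4) on the minimal model, after a log resolution put oneself in the situation of Theorem \ref{f-thm3.6} (with $D$ the reduced boundary coming from resolving, every stratum dominant, monodromy unipotent) to get local freeness and nefness of the pushforward on an open set of $Y$; (5) propagate local freeness to all of $Y$ using the base-change criterion and the fact that the semistable structure guarantees no jumping of $h^0$ — here the semipositivity (nefness) obtained in step (4) helps control the numerical behaviour. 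I expect step (3)–(5), i.e.\ the passage between $X$ and its relative good minimal model while keeping track of pluricanonical sections and then extending local freeness over the whole base, to be the main obstacle; steps (1)–(2) are essentially formal given the earlier results in the excerpt. The hard part is really that good minimal models are only guaranteed fibrewise, so one must spread them out in families and argue that the anomalous fibers (those over $\Sigma$) do not destroy local freeness — this is exactly the content of \cite[Section 4]{fujino-direct}.
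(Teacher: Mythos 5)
Your overall skeleton --- pass to a relative good minimal model $\widetilde f\colon \widetilde X\to Y$, use $f_*\omega^{\otimes m}_{X/Y}\simeq \widetilde f_*\mathcal O_{\widetilde X}(mK_{\widetilde X/Y})$, then prove local freeness of the latter via constancy of fibrewise $h^0$ and base change --- matches the paper's proof. But your step (4) contains a genuine gap: you propose to obtain local freeness on the minimal model side by putting yourself ``in the situation of Theorem \ref{f-thm3.6}.'' Theorem \ref{f-thm3.6} is a statement about $R^if_*\omega_{X/Y}(D)$, i.e.\ about direct images of the relative (log) \emph{canonical} bundle; it says nothing about $\omega^{\otimes m}_{X/Y}$ for $m\geq 2$, and no Hodge-theoretic (VHS/VMHS) statement in this circle of ideas does. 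The paper is explicit about this: the local freeness of $f_*\omega^{\otimes m}_{X/Y}$ for $m\geq 2$ is highly nontrivial even for \emph{smooth} morphisms (Theorem \ref{f-thm3.9}, Problem \ref{f-prob3.10}), precisely because the unipotent-monodromy/semipositivity machinery does not apply to pluricanonical direct images. So the engine you invoke at the crucial moment cannot start. Your step (2) has the same problem in milder form: Koll\'ar-type vanishing controls $R^qf_*\omega_X$-type sheaves, not $R^qf_*\omega_X^{\otimes m}$, so it does not by itself kill the obstructions to base change for $m\geq 2$.

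What the paper uses instead is Nakayama's Lemma \ref{f-lem3.16} (a pluricanonical variant of Koll\'ar's torsion-freeness): for $g\colon V\to C$ projective onto a smooth \emph{curve}, with $V$ having canonical singularities and $K_V$ $g$-semiample, $R^ig_*\mathcal O_V(mK_V)$ is locally free for all $m\geq 1$. The reduction to a one-dimensional base is exactly where weak semistability earns its keep: for a closed point $P\in Y$ one cuts $Y$ by a general smooth curve $C$ through $P$, and weak semistability guarantees that both $f$ and the relative good minimal model $\widetilde f$ behave well under the base change $C\hookrightarrow Y$ (equidimensionality, reduced fibers, flatness, and stability of the minimal-model property). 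Lemma \ref{f-lem3.16} then gives constancy of $\dim H^0(\widetilde X_y,\mathcal O_{\widetilde X}(mK_{\widetilde X/Y})|_{\widetilde X_y})$ along $C$, hence everywhere, and flat base change concludes. If you replace your step (4) by this curve-cutting argument plus Lemma \ref{f-lem3.16}, your proposal becomes essentially the paper's proof; as written, the central step does not go through.
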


\begin{proof}[Sketch of Proof of Theorem \ref{f-thm3.15}]
Let us consider the following commutative diagram: 
$$
\xymatrix{
X \ar[dr]_{f}\ar@{-->}[rr]^{\phi}& & \widetilde X\ar[dl]^{\widetilde f} \\
& Y &
}
$$
where $\widetilde f:\widetilde X\to Y$ is a relative 
good minimal model of $f: X\to Y$. 
We can always construct a relative good minimal model 
by the assumption that the geometric generic fiber of $f$ has a 
good minimal model. 
Then we have 
\begin{equation}\tag{$\heartsuit$}\label{isom-spade}
f_*\omega^{\otimes m}_{X/Y}
\simeq \widetilde f_*\mathcal O_{\widetilde X}(mK_{\widetilde X/Y})
\end{equation} 
for every positive integer $m$. 
We note that 
$X$ has only rational Gorenstein singularities. 

The following lemma due to Nakayama is a variant of 
Koll\'ar's torsion-freeness:~Theorem \ref{f-thm2.5} (i). 
It is a key ingredient of the proof of Theorem \ref{f-thm3.15}. 

\begin{lem}[{\cite[Corollary 3]{nakayama1}}]\label{f-lem3.16}
Let $g:V\to C$ be a projective surjective morphism 
from a normal quasiprojective 
variety $V$ to a smooth 
quasiprojective curve $C$. 
Assume that $V$ has only canonical singularities and 
that $K_V$ is $g$-semiample. 
Then $R^ig_*\mathcal O_V(mK_V)$ is locally free for every $i$ and 
every positive integer $m$. 
\end{lem}
By the above isomorphism \eqref{isom-spade}, it is sufficient to prove the local freeness 
of $\widetilde f_*\mathcal O_{\widetilde X}(mK_{\widetilde X/Y})$. 
Let $P$ be an arbitrary closed point of $Y$. 
Since $f: X\to Y$ is weakly semistable, 
we can prove that 
the diagram 
$$
\xymatrix{
X \ar@{--}[rr]^{\phi}\ar[dr]_{f}&& \widetilde X\ar[ld]^{\widetilde f}\\
&Y&
}
$$ 
behaves well under the base change by $C\hookrightarrow Y$, 
where $C$ is a general smooth curve on $Y$ passing through 
$P$. 
Roughly speaking, 
by this observation,  
we can reduce the problem to the case when $Y$ is a 
smooth projective curve. 
Note that $f$ and $\widetilde f$ are both flat. 
By Lemma \ref{f-lem3.16}, we see that 
$\dim H^0(\widetilde X_y, 
\mathcal O_{\widetilde X}(mK_{\widetilde X/Y})|_{\widetilde X_y})$ is 
independent of $y\in Y$. 
Therefore, 
$\widetilde f_*\mathcal O_{\widetilde X}(mK_{\widetilde X/Y})$ 
is locally free by the flat base change theorem. 
Thus, we obtain that 
$f_*\omega^{\otimes m}_{X/Y}$ is locally free. 
For the details, see \cite[Section 4]{fujino-direct} and 
\cite{fujino-direct-corr}.
\end{proof}

By the argument in \cite[Section 5]{fujino-direct}, 
we can prove: 

\begin{thm}[Semipositivity]\label{f-thm3.17} 
Let $f:X\to Y$ be a weakly semistable 
morphism between projective varieties with connected fibers. 
Let $m\geq 1$ be fixed. 
Assume that 
$f_*\omega^{\otimes m}_{X/Y}$ is locally free. 
Then $f_*\omega^{\otimes m}_{X/Y}$ is nef. 
\end{thm}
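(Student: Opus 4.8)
Theorem \ref{f-thm3.17} asserts nefness of $f_*\omega^{\otimes m}_{X/Y}$ for a weakly semistable $f:X\to Y$ between projective varieties, assuming this sheaf is already locally free. The plan is to reduce, by the standard ``curve test'' for nefness of a locally free sheaf on a projective variety, to the case where $Y$ is a smooth projective curve; then to bootstrap from the $m=1$ case—which is the Fujita--Zucker--Kawamata semipositivity theorem (Theorem \ref{f-thm3.1}, or Theorem \ref{f-thm3.5})—to general $m$ using a covering/fiber-product trick in the spirit of Viehweg.

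\textbf{Step 1: Reduction to a curve base.} A locally free sheaf $\mathcal E$ on a projective variety $Y$ is nef if and only if its restriction $\mathcal E|_C$ is nef for every smooth projective curve $C$ mapping to $Y$; equivalently, for every finite morphism from a smooth curve. So fix such a $\nu:C\to Y$, general through a chosen point, and base-change the weakly semistable family. Here I invoke the key feature of weak semistability used already in the proof of Theorem \ref{f-thm3.15}: the toroidal, equidimensional, reduced-fiber structure is preserved under base change by a general curve $C\subset Y$, so that $X_C:=X\times_Y C\to C$ is again weakly semistable (after at most a normalization that does not affect the relevant direct image, since the fibers are reduced Gorenstein). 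Moreover flatness of $f$ and base change give $(f_*\omega_{X/Y}^{\otimes m})|_C\simeq (f_C)_*\omega_{X_C/C}^{\otimes m}$. Thus it suffices to prove: for $f:X\to Y$ weakly semistable with $Y$ a smooth projective curve, $f_*\omega_{X/Y}^{\otimes m}$ is nef.

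\textbf{Step 2: From $m=1$ to general $m$.} For $m=1$ this is precisely Theorem \ref{f-thm3.1} (Fujita), once we observe $X$ has rational Gorenstein singularities so $f_*\omega_{X/Y}=f_*\omega_{X/Y}$ agrees with the relative dualizing sheaf and Fujita's statement applies after resolving $X$ (a resolution does not change $f_*\omega_X$ since the singularities are rational Gorenstein, hence canonical). For $m\ge 2$, I would take a sufficiently divisible $d$ and a branched cover $Z\to X$ built from a general member of a linear system on $\omega_{X/Y}^{\otimes m}$ (or $\otimes(m-1)$) twisted suitably, arranged—using Bertini on the smooth locus, which over a curve base is all but finitely many fibers, plus control of the bad fibers via the weakly semistable structure—so that $\omega_{Z/Y}$ contains $f^*(\text{stuff})\otimes(\text{direct image we care about})$ as a direct summand after pushforward. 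Then nefness of $g_*\omega_{Z/Y}$ (again by Fujita, Theorem \ref{f-thm3.1}, applied to a resolution of $Z$) forces nefness of the summand $f_*\omega_{X/Y}^{\otimes m}$, since a subsheaf-summand—indeed any quotient—of a nef locally free sheaf is nef, and local freeness is our hypothesis. This is exactly Viehweg's covering argument (\cite{viehweg1}, \cite{viehweg2}); the point of assuming local freeness is to make the summand statement clean and to avoid saturation issues.

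\textbf{The main obstacle.} The delicate point is Step 2: constructing the covering $Z\to X$ so that it is again sufficiently nice over $Y$—one needs the ramification divisor to avoid the singular strata of the bad fibers, and one needs the resolution of $Z$ to still compute the right direct image, i.e. $Z$ should have at worst rational Gorenstein (or canonical) singularities, or one must correct by a controlled discrepancy term. Over a curve base the weakly semistable hypothesis is exactly what makes this tractable: the finitely many bad fibers are reduced with toroidal (hence log canonical, rational Gorenstein) total space, so Bertini applies away from a codimension-$\ge 2$ locus and the standard cyclic-cover construction produces a $Z$ whose singularities are again mild. Pinning down that ``mild enough'' and that $f_*\omega_{X/Y}^{\otimes m}$ really appears as a direct summand of $g_*\omega_{Z/Y}$—rather than merely a subsheaf—is the technical heart, and is where I would spend the actual work; everything else is formal from the curve test and Fujita's theorem.
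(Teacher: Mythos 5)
Your Step 1 contains a genuine gap that the rest of the argument cannot repair. To conclude that a locally free sheaf $\mathcal E$ on a projective variety $Y$ is nef via the curve test, you must verify that $\nu^*\mathcal E$ is nef for \emph{every} finite morphism $\nu:C\to Y$ from a smooth curve, not only for curves in general position: a sheaf can easily be nonnegative on all general (movable) curves while failing nefness on some rigid curve, e.g.\ a curve contained in the discriminant locus of $f$. But your reduction only works for a \emph{general} curve $C$ through a point, because that is the only situation in which (a) the base-changed family $X\times _Y C\to C$ inherits the weakly semistable (toroidal, reduced-fiber) structure, and (b) the base-change isomorphism $(f_*\omega^{\otimes m}_{X/Y})|_C\simeq (f_C)_*\omega^{\otimes m}_{X_C/C}$ is available --- note that the hypothesis of Theorem \ref{f-thm3.17} is only local freeness of $f_*\omega^{\otimes m}_{X/Y}$, which by itself does not give cohomology and base change along an arbitrary curve. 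For a curve lying inside $Y\setminus U_Y$ neither (a) nor (b) holds, so the nefness of $\mathcal E$ restricted to such curves is left unproved. (Step 2, over a curve base, is essentially Kawamata's and Viehweg's known theorem --- compare the first part of Theorem \ref{f-thm4.10} --- so it is not where the difficulty lies.)

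This is precisely why the paper's proof avoids the curve reduction altogether. It combines Viehweg's fiber product trick with the Popa--Schnell theorem (Theorem \ref{f-thm3.18}), which is an application of the Koll\'ar vanishing theorem (Theorem \ref{f-thm2.5} (ii)): since weakly semistable morphisms behave well under fiber products $X\times_Y\cdots\times_Y X$, and $f_*\omega^{\otimes m}_{X/Y}$ is locally free by hypothesis, one produces a single ample line bundle $\mathcal A$ on $Y$ such that $\left(\bigotimes^s f_*\omega^{\otimes m}_{X/Y}\right)\otimes \mathcal A$ is generated by global sections for \emph{every} $s\geq 1$. Global generation gives nefness of these tensor powers on all of $Y$ at once, and letting $s\to\infty$ forces $f_*\omega^{\otimes m}_{X/Y}$ itself to be nef --- no curve-by-curve analysis, no use of the Fujita--Zucker--Kawamata theorem, and no cyclic covering construction. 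If you want to salvage your strategy, you would need either to handle restriction to arbitrary curves directly or to switch to a global-generation criterion of this kind.
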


\begin{proof}[Idea of Proof of Theorem \ref{f-thm3.17}]

The following theorem by Popa--Schnell is 
a clever and interesting application of the Koll\'ar vanishing 
theorem:~Theorem \ref{f-thm2.5} (ii). 

\begin{thm}[{\cite[Theorem 1.4]{popa-schnell}}]\label{f-thm3.18}
Let $f:V\to W$ be a surjective morphism 
from a smooth projective 
variety $V$ onto a projective variety $W$ with 
$\dim W=n$. 
Let $\mathcal L$ be an ample line bundle 
on $W$ such that 
$|\mathcal L|$ has no base points. 
Let $k$ be a positive integer. 
Then 
$$
f_*\omega^{\otimes k}_V\otimes \mathcal L^{\otimes l}
$$ 
is generated by global sections for every 
$l\geq k(n+1)$. 
\end{thm}
By Viehweg's fiber product trick and 
the local freeness of $f_*\omega^{\otimes m}_{X/Y}$, 
we can prove that 
there exists an ample line bundle $\mathcal A$ on $Y$ such that 
$$
\left( \bigotimes ^s f_*\omega^{\otimes m}_{X/Y}\right)\otimes 
\mathcal A
$$ 
is generated by global sections for every positive integer $s$ by 
Theorem \ref{f-thm3.18}. Here, 
we used the fact that weakly semistable morphisms behave well by 
taking fiber products. 
This implies that $f_*\omega^{\otimes m}_{X/Y}$ is nef. 
For the details, see \cite[Section 5]{fujino-direct}. 
\end{proof}

As we saw above, a key ingredient of Theorem \ref{f-thm3.15} 
(resp.~Theorem \ref{f-thm3.17})  
is Koll\'ar's torsion-freeness (resp.~Koll\'ar's vanishing theorem). 
Of course, the existence of 
relative good minimal models plays a crucial role 
in the proof 
of Theorem \ref{f-thm3.15}.

\begin{rem}\label{f-rem3.19}
In the proof of Theorem \ref{f-thm3.15}, 
we need the finite generation of relative 
canonical ring $$R(X/Y)=\bigoplus _{m}^{\infty} f_*\mathcal O_X(mK_X)$$ 
from \cite{bchm} to construct a relative good minimal model of 
$f: X\to Y$. 
Note that the finite generation of $R(X/Y)$ is more or less 
Hodge theoretic when $X_{\overline \eta}$ is not of general type. 
This is because the reduction argument due to Fujino--Mori 
(see Theorem \ref{f-thm5.4} below and \cite{fujino-mori}) uses 
Theorem \ref{f-thm3.5}. 
\end{rem}

\begin{rem}\label{f-rem3.20} 
Let $V$ be a smooth 
projective variety. 
It is well-known that $V$ has a good minimal model 
when $\dim V-\kappa (V)\leq 3$. 
\end{rem}

By combining Theorem \ref{f-thm3.15} with Theorem \ref{f-thm3.17}, we obtain: 

\begin{thm}\label{f-thm3.21}
Let $f:X\to Y$ be a surjective morphism between smooth projective varieties 
with connected fibers. 
Assume that 
$$
f:X\overset{\delta}{\longrightarrow}X^\dag\overset{f^\dag}{\longrightarrow} Y
$$ 
such that $f^\dag: X^\dag\to Y$ is weakly semistable 
and that $\delta$ is a resolution of singularities. 
We further assume that 
the geometric generic fiber $X_{\overline \eta}$ of $f$ has a good 
minimal model. 
Then $f_*\omega^{\otimes m}_{X/Y}$ is a nef 
locally free sheaf for every positive integer $m$. 
\end{thm}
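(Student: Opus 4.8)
The plan is to descend the computation along the resolution $\delta: X\to X^\dag$ and then invoke Theorem \ref{f-thm3.15} and Theorem \ref{f-thm3.17} for the weakly semistable morphism $f^\dag: X^\dag\to Y$. The first and main step is to prove the isomorphism
$$
f_*\omega^{\otimes m}_{X/Y}\simeq f^\dag_*\omega^{\otimes m}_{X^\dag/Y}
$$
for every positive integer $m$. Since $f^\dag$ is weakly semistable, $X^\dag$ has only rational Gorenstein singularities (see Definition \ref{f-def3.13}); in particular $X^\dag$ is normal, $\omega_{X^\dag}$ is invertible, and the singularities of $X^\dag$ are canonical. Writing $K_X=\delta^*K_{X^\dag}+E$ with $E$ an effective $\delta$-exceptional integral divisor, we obtain $\delta_*\mathcal O_X(mK_X)=\mathcal O_{X^\dag}(mK_{X^\dag})$ for every $m\geq 1$, because $\delta_*\mathcal O_X(mE)=\mathcal O_{X^\dag}$. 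Using $f=f^\dag\circ\delta$, the projection formula, and twisting by $\omega_Y^{\otimes -m}$, this yields the displayed isomorphism of sheaves on $Y$.

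Next I would check that $f^\dag: X^\dag\to Y$ satisfies the hypotheses of Theorem \ref{f-thm3.15}. The morphism $f^\dag$ has connected fibers: $X^\dag$ is normal and $\delta$ is proper birational, so every fiber of $\delta$ is connected, and $(f^\dag)^{-1}(y)=\delta(f^{-1}(y))$ is therefore connected for every $y\in Y$. Moreover the geometric generic fiber $X^\dag_{\overline\eta}$ is birational to $X_{\overline\eta}$, since $X_{\overline\eta}\to X^\dag_{\overline\eta}$ is a resolution; as the existence of a good minimal model depends only on the birational equivalence class and $X^\dag_{\overline\eta}$ has canonical singularities, the hypothesis that $X_{\overline\eta}$ has a good minimal model forces $X^\dag_{\overline\eta}$ to have one as well. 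Hence Theorem \ref{f-thm3.15} shows that $f^\dag_*\omega^{\otimes m}_{X^\dag/Y}$ is locally free for every $m\geq 1$.

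Finally, since $f^\dag: X^\dag\to Y$ is a weakly semistable morphism between projective varieties with connected fibers and $f^\dag_*\omega^{\otimes m}_{X^\dag/Y}$ is locally free, Theorem \ref{f-thm3.17} gives that it is nef; combined with the isomorphism of the first step, $f_*\omega^{\otimes m}_{X/Y}$ is a nef locally free sheaf for every positive integer $m$. The step I expect to be the main obstacle is the first one: one must verify that $\delta_*\mathcal O_X(mK_X)=\mathcal O_{X^\dag}(mK_{X^\dag})$ holds for \emph{all} $m\geq 1$ rather than just $m=1$, which is precisely where one uses that the singularities of the weakly semistable total space $X^\dag$ are canonical (equivalently, rational Gorenstein); once this isomorphism is in hand, the remaining steps are direct applications of Theorem \ref{f-thm3.15} and Theorem \ref{f-thm3.17}. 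For the full details I would refer to \cite{fujino-direct} and \cite{fujino-direct-corr}.
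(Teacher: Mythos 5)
Your proposal is correct and follows exactly the route the paper intends: the paper derives Theorem \ref{f-thm3.21} simply ``by combining Theorem \ref{f-thm3.15} with Theorem \ref{f-thm3.17}'', and your argument supplies the implicit bridging step, namely the isomorphism $f_*\omega^{\otimes m}_{X/Y}\simeq f^\dag_*\omega^{\otimes m}_{X^\dag/Y}$ coming from the canonical (rational Gorenstein) singularities of the weakly semistable total space $X^\dag$, after which the two cited theorems give local freeness and nefness. The auxiliary verifications (connected fibers of $f^\dag$, transfer of the good-minimal-model hypothesis to $X^\dag_{\overline\eta}$ by birational invariance) are also correct.
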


By the weak semistable reduction theorem 
due to Abramovich--Karu (see \cite{abramovich-karu}) 
and Theorem \ref{f-thm3.21}, we have: 

\begin{thm}\label{f-thm3.22}
Let $f:X\to Y$ be a surjective morphism between 
smooth projective varieties with connected fibers. 
Assume that the geometric generic fiber $X_{\overline \eta}$ of $f:X\to Y$ 
has a good minimal model. 
Then there exists a generically finite morphism 
$\tau:Y'\to Y$ from a smooth projective variety $Y'$ 
with the following property. 
Let $X'$ be any resolution of the main component of $X\times _Y Y'$ 
sitting in the following commutative diagram:  
$$
\xymatrix{X' \ar[r]\ar[d]_{f'}& X\ar[d]^f\\
Y'\ar[r]_{\tau} &Y. 
}
$$ 
Then $f'_*\omega^{\otimes m}_{X'/Y'}$ is a nef 
locally free sheaf for every positive integer $m$. 
\end{thm}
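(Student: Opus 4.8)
The plan is to reduce Theorem~\ref{f-thm3.22} to Theorem~\ref{f-thm3.21} by feeding $f\colon X\to Y$ into the weak semistable reduction theorem of Abramovich--Karu \cite{abramovich-karu}, after first observing that it is enough to prove the assertion for one well-chosen resolution $X'$.

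The first step is the reduction to a single resolution. For a proper birational morphism $g\colon W\to Z$ of smooth projective varieties one has $g_*\omega_W^{\otimes m}\simeq \omega_Z^{\otimes m}$ for every $m\geq 1$: writing $K_W=g^*K_Z+E$ with $E$ an effective $g$-exceptional divisor --- all discrepancies of the smooth variety $Z$ being positive --- the projection formula gives $g_*\mathcal O_W(mK_W)\simeq \mathcal O_Z(mK_Z)\otimes g_*\mathcal O_W(mE)\simeq \mathcal O_Z(mK_Z)$. Since any two resolutions of the main component of $X\times_Y Y'$ are dominated by a common resolution, and since $\omega_{X'/Y'}=\omega_{X'}\otimes f'^*\omega_{Y'}^{-1}$ is a line bundle when $X'$ and $Y'$ are smooth, the sheaf $f'_*\omega^{\otimes m}_{X'/Y'}$ does not depend on the choice of $X'$. (One also checks, via the Stein factorization and normality of $Y'$, that $f'_*\mathcal O_{X'}=\mathcal O_{Y'}$, so $f'$ has connected fibers.) Hence it suffices to prove nefness and local freeness of $f'_*\omega^{\otimes m}_{X'/Y'}$ for a single convenient $X'$.

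Now I would apply the weak semistable reduction theorem \cite{abramovich-karu} to $f\colon X\to Y$. This yields a generically finite projective morphism $\tau\colon Y'\to Y$ with $Y'$ a smooth projective variety, a projective birational morphism onto the main component of $X\times_Y Y'$ from a variety $X^{\dagger}$, and a weakly semistable morphism $f^{\dagger}\colon X^{\dagger}\to Y'$ compatible with $\tau$. Choosing a resolution of singularities $\delta\colon X'\to X^{\dagger}$, the variety $X'$ is then a resolution of the main component of $X\times_Y Y'$, and $f'\colon X'\to Y'$ factors as $f'=f^{\dagger}\circ\delta$ with $f^{\dagger}$ weakly semistable and $\delta$ a resolution. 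Since $\tau$ is generically finite, the geometric generic points of $Y'$ and $Y$ coincide, so the geometric generic fiber of $f'$ is birational to $X_{\overline\eta}$; as the existence of a good minimal model is a birational invariant of smooth projective varieties, the geometric generic fiber of $f'$ has a good minimal model by hypothesis. Therefore Theorem~\ref{f-thm3.21} applies to $f'\colon X'\to Y'$ with the factorization $X'\overset{\delta}{\longrightarrow}X^{\dagger}\overset{f^{\dagger}}{\longrightarrow}Y'$ and shows that $f'_*\omega^{\otimes m}_{X'/Y'}$ is a nef locally free sheaf for every positive integer $m$. By the first step, the same conclusion holds for an arbitrary resolution $X'$, which is exactly the statement.

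I do not expect a deep obstacle here: the substantive input --- local freeness through Koll\'ar's torsion-freeness and nefness through the Popa--Schnell/Viehweg fiber-product argument --- is already packaged inside Theorems~\ref{f-thm3.15}, \ref{f-thm3.17}, and \ref{f-thm3.21}. The only genuinely delicate point is the bookkeeping around \cite{abramovich-karu}: checking that $Y'$ may be taken smooth and projective, that $\tau$ may be taken generically finite, and that the connectedness and good-minimal-model hypotheses of Theorem~\ref{f-thm3.21} survive the base change by $\tau$. All of these are either built into the Abramovich--Karu construction or follow from the elementary observations in the first step.
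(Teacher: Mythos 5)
Your proposal is correct and follows essentially the same route as the paper, which derives Theorem \ref{f-thm3.22} directly from the Abramovich--Karu weak semistable reduction theorem combined with Theorem \ref{f-thm3.21}. The only material you add is the (correct and genuinely needed) observation that $f'_*\omega^{\otimes m}_{X'/Y'}$ is independent of the choice of resolution $X'$, which the paper leaves implicit.
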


This means that Conjecture \ref{f-conj3.8} holds true 
under the assumption that 
the geometric generic fiber of $f$ has a good minimal model. 
More precisely, Conjecture \ref{f-conj3.8} 
follows from Conjecture \ref{f-conj3.14} 
by the weak semistable reduction theorem due to 
Abramovich--Karu (see \cite{abramovich-karu}) and Theorem \ref{f-thm3.17}. 
Moreover, Conjecture \ref{f-conj3.14} holds under the 
assumption that the geometric generic fiber has a good minimal model 
(see Theorem \ref{f-thm3.15}). 

We close this section with Takayama's result. 
Using complex analytic methods, 
Takayama in \cite{takayama} strengthened Theorem \ref{f-thm3.21} 
as follows. 

\begin{thm}[Takayama]\label{f-thm3.23} 
In Theorem \ref{f-thm3.21}, for every positive integer $m$, 
the $m$-th Narasimhan--Simha Hermitian 
metric $g_m$ on the locally free sheaf 
$\mathcal E_m =f_*\omega^{\otimes m}
_{X/Y}$ has Griffiths semipositive curvature, the induced singular Hermitian metric 
$h=e^{-\varphi}$ on $\mathcal O_{\mathbb 
P_X(\mathcal E_m)}(1)$ of $\mathbb P_X(\mathcal E_m)$ has 
semipositive curvature, and the 
Lelong number of 
the local weight $\varphi$ is zero everywhere on $\mathbb P_X(\mathcal E_m)$. 
In particular, 
$\mathcal O_{\mathbb P_X(\mathcal E_m)}(1)$ is nef. 
\end{thm}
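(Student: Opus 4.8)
The argument is purely complex analytic, with no Hodge-theoretic substitute at present. The plan is to equip $\mathcal E_m=f_*\omega^{\otimes m}_{X/Y}$ with the Narasimhan--Simha metric over the locus where $f$ is smooth, to prove that this metric is Griffiths semipositive there, to extend it across the degeneration locus as a singular Hermitian metric whose local weights have vanishing Lelong number, and finally to transport everything to $\mathcal O(1)$ on the projective bundle $\mathbb P_Y(\mathcal E_m)$. To set things up, let $Y^{\circ}\subset Y$ be the nonempty Zariski open set over which $f$ is smooth, put $X^{\circ}=f^{-1}(Y^{\circ})$, and let $d$ denote the dimension of the fibres. For $y\in Y^{\circ}$ and $u\in H^0(X_y,\omega^{\otimes m}_{X_y})=(\mathcal E_m)_y$ we set
\[
\|u\|^{2}_{g_m}\;=\;\left(\int_{X_y}|u|^{2/m}\right)^{m},
\]
where $|u|^{2/m}$ is the volume form on the smooth projective fibre $X_y$ naturally attached to the pluricanonical form $u$. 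Since $\mathcal E_m$ is locally free on all of $Y$ by Theorem \ref{f-thm3.21}, this prescription defines a smooth Hermitian metric $g_m$ on $\mathcal E_m|_{Y^{\circ}}$, the $m$-th Narasimhan--Simha metric.

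First I would establish Griffiths semipositivity of $g_m$ over $Y^{\circ}$. The tool is the Ohsawa--Takegoshi $L^2$ extension theorem in its pluricanonical ($L^{2/m}$) form, combined with the positivity machinery for direct images of Berndtsson and P\u{a}un. Concretely, the fibrewise ``$m$-Bergman kernel'' attached to the Narasimhan--Simha pairing is semipositively curved on $X^{\circ}$, by an extremal-extension argument: any pluricanonical form on a fibre extends to nearby fibres with an $L^{2/m}$-estimate, and one deduces, exactly as in Berndtsson's theorem for the case $m=1$, that $\log\|s\|^{2}_{g_m^{\vee}}$ is plurisubharmonic on $Y^{\circ}$ for every local holomorphic section $s$ of the dual bundle $\mathcal E_m^{\vee}$. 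This is precisely the statement that $g_m$ is Griffiths semipositive over $Y^{\circ}$.

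The hard part will be the passage from $Y^{\circ}$ to $Y$: one must show that $g_m$ extends to a singular Hermitian metric on $\mathcal E_m$ over all of $Y$ that is still positively curved in the sense of Berndtsson--P\u{a}un and whose local weights have Lelong number zero everywhere. This is where the hypotheses of Theorem \ref{f-thm3.21} enter essentially. Because $f$ factors as $X\overset{\delta}{\longrightarrow}X^{\dagger}\overset{f^{\dagger}}{\longrightarrow}Y$ with $f^{\dagger}$ weakly semistable, the degeneration of the fibres over $Y\setminus Y^{\circ}$ has toroidal local models with reduced fibres, so in particular the local monodromies around $Y\setminus Y^{\circ}$ are unipotent; the $L^{2/m}$ extension theorem then allows one to extend pluricanonical sections from general fibres over the whole family with at worst a logarithmic loss in Narasimhan--Simha norm. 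Consequently the extended local weights of $g_m$ grow at worst like $\log$ of the distance to $Y\setminus Y^{\circ}$, so all of their Lelong numbers vanish. I expect this uniform control near the degeneration locus --- rather than the Griffiths semipositivity over $Y^{\circ}$, which is more or less the Berndtsson--P\u{a}un theory --- to be the main obstacle, and it is exactly the point at which $L^2$ methods are indispensable.

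Finally, one passes to the projective bundle $\pi\colon\mathbb P_Y(\mathcal E_m)\to Y$, which is smooth and projective since $Y$ is smooth projective and $\mathcal E_m$ is locally free. A Griffiths semipositive singular Hermitian metric $g_m$ on $\mathcal E_m$ induces tautologically a singular Hermitian metric $h=e^{-\varphi}$ on $\mathcal O_{\mathbb P_Y(\mathcal E_m)}(1)$, built from the $g_m$-lengths of pullbacks of local sections of $\mathcal E_m$; plurisubharmonicity of the local weight $\varphi$, that is, semipositivity of the curvature current $i\Theta_h$, is equivalent to the Griffiths semipositivity of $g_m$, and the vanishing Lelong numbers of the weights of $g_m$ pass to $\varphi$ along the fibres of $\pi$ and hence on all of $\mathbb P_Y(\mathcal E_m)$. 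To conclude nefness I would invoke Demailly's regularization theorem: on a compact complex manifold, a line bundle carrying a closed semipositive curvature current whose Lelong numbers vanish identically can be approximated by smooth metrics of curvature $\geq-\varepsilon\,\omega$ for every $\varepsilon>0$, hence is nef. Therefore $\mathcal O_{\mathbb P_Y(\mathcal E_m)}(1)$ is nef, which by Definition \ref{f-def3.2} re-proves --- and strengthens --- the nefness of $\mathcal E_m$ of Theorem \ref{f-thm3.21}. For the details, see \cite{takayama}.
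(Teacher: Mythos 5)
The paper does not actually prove Theorem \ref{f-thm3.23}: it states the result and defers entirely to the original paper \cite{takayama}, remarking only that the proof is based on the local freeness results of \cite{fujino-direct} and \cite{fujino-direct-corr}. So there is no in-paper argument to compare against; your sketch has to be judged on its own. Its overall architecture --- define the Narasimhan--Simha metric $g_m$ fibrewise over the smooth locus $Y^{\circ}$, prove Griffiths semipositivity there by Ohsawa--Takegoshi $L^{2/m}$ extension in the style of Berndtsson--P\u{a}un \cite{bp1} and P\u{a}un--Takayama \cite{paun-takayama}, extend across $Y\setminus Y^{\circ}$ as a positively curved singular metric, and conclude nefness of $\mathcal O_{\mathbb P_Y(\mathcal E_m)}(1)$ from vanishing Lelong numbers via Demailly regularization --- is the right one, and you correctly identify the extension across the degeneration locus as the crux. (You also silently correct the paper's $\mathbb P_X(\mathcal E_m)$ to $\mathbb P_Y(\mathcal E_m)$, which is surely the intended reading.)

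There is, however, one genuine logical gap at exactly the step you flag as the main obstacle. You argue that the local weights of $g_m$ ``grow at worst like $\log$ of the distance to $Y\setminus Y^{\circ}$, so all of their Lelong numbers vanish.'' This inference is false: if a plurisubharmonic weight $\varphi$ behaves like $c\log(\mathrm{dist})$ with $c>0$ near the bad locus, its Lelong number there is $c$, not $0$. Vanishing of the Lelong number requires a \emph{sub}-logarithmic estimate, i.e.\ $\varphi=o(\log(\mathrm{dist}))$. The standard way to achieve this --- and the way it is done in \cite{takayama} --- is to run the extension argument not on $\mathcal E_m$ itself but on high tensor or symmetric powers (equivalently, on $\mathcal O_{\mathbb P_Y(\mathcal E_m)}(k)$ for $k\to\infty$), using Viehweg-type fiber products so that the logarithmic loss is uniform in $k$; dividing by $k$ then forces the Lelong number to be $\leq C/k$ for every $k$, hence zero. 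This is also where Fujino's theorem that $\mathcal E_m$ is locally free on \emph{all} of $Y$ (Theorem \ref{f-thm3.15}, via the MMP) enters essentially: one needs the algebraic extension of $\mathcal E_m|_{Y^{\circ}}$ as a fixed locally free reference against which to measure the singularities of the metric extension, and the survey explicitly says Takayama's proof is built on that input. Your sketch mentions the local freeness only in passing; without it the comparison that produces the quantitative estimate has no footing. With the power trick supplied and the role of \cite{fujino-direct} made explicit, your outline matches the intended proof.
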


For the definition of the Narasimhan--Simha Hermitian metric and 
the details of Theorem \ref{f-thm3.23}, see the 
original paper \cite{takayama}. We point out that 
Theorem \ref{f-thm3.23} is based on the arguments in 
\cite{fujino-direct} and \cite{fujino-direct-corr}. 

I am not so familiar with 
the analytic aspect of semipositivity theorems. 
For the details, see \cite{berndtsson}, 
\cite{bp1}, \cite{bp2}, \cite{mour}, \cite{mt1}, \cite{mt2}, 
\cite{mt3}, \cite{paun-takayama}, \cite{takayama}, etc.  

\section{Canonical divisors versus pluricanonical divisors}\label{f-sec4}

In this section, let us see that $mK_X$ with $m\geq 2$ sometimes 
behaves much better than $K_X$. We will discuss two different topics:~Koll\'ar's 
result on plurigenera in \'etale covers of 
smooth projective varieties of general type and 
Viehweg's ampleness theorem 
on direct images of relative 
pluricanonical bundles of semistable 
families of projective varieties. 
I was very much impressed by these results. 

\subsection{Plurigenera in \'etale covers}\label{f-subsec4.1}
Let us recall Koll\'ar's famous result on plurigenera in \'etale covers of smooth 
projective varieties of general type (see \cite{kollar-shafarevich}). 
For the details and some related topics, 
see also \cite[2.~Vanishing Theorems]{kollar-singularities} 
and \cite[Chapter 15]{kollar-book}. 

\begin{thm}[Koll\'ar]\label{f-thm4.1}
Let $X$ be a smooth projective variety of general type. 
Let $f:Y\to X$ be an \'etale morphism 
from a smooth projective variety $Y$. 
Then we have 
$$
h^0(Y, \mathcal O_Y(mK_Y))=\deg f \cdot h^0(X, \mathcal O_X(mK_X))
$$ 
for every positive integer $m\geq 2$. 
\end{thm}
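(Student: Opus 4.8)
The plan is to follow Lazarsfeld's argument via asymptotic multiplier ideal sheaves, whose role here is to replace the plurigenus $h^0$ by an Euler characteristic, the latter behaving multiplicatively under \'etale covers. Since $f$ is \'etale, $K_Y=f^*K_X$, so $\mathcal O_Y(mK_Y)=f^*\mathcal O_X(mK_X)$; and since $X$ is of general type, so is $Y$, and $(m-1)K_X$ together with $(m-1)K_Y=f^*\bigl((m-1)K_X\bigr)$ are both big --- this is exactly the point at which the hypothesis $m\ge 2$ enters. Two further consequences of \'etaleness will be used: asymptotic multiplier ideals pull back, $\mathcal J\bigl(\|(m-1)K_Y\|\bigr)=f^*\mathcal J\bigl(\|(m-1)K_X\|\bigr)$; and, because $T_Y=f^*T_X$, Hirzebruch--Riemann--Roch gives $\chi\bigl(Y,f^*\mathcal G\bigr)=(\deg f)\cdot\chi(X,\mathcal G)$ for every coherent sheaf $\mathcal G$ on $X$.

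Write $\mathcal J:=\mathcal J\bigl(\|(m-1)K_X\|\bigr)$. I would then invoke two standard properties of this ideal. First, Nadel vanishing for asymptotic multiplier ideals: since $(m-1)K_X$ is big and $mK_X=K_X+(m-1)K_X$, one has $H^i\bigl(X,\mathcal O_X(mK_X)\otimes\mathcal J\bigr)=0$ for every $i>0$; by the pullback formula for $\mathcal J$ and bigness of $(m-1)K_Y$, also $H^i\bigl(Y,\mathcal O_Y(mK_Y)\otimes f^*\mathcal J\bigr)=0$ for $i>0$. Second, $\mathcal J$ loses no pluricanonical sections: $H^0\bigl(X,\mathcal O_X(mK_X)\otimes\mathcal J\bigr)=H^0\bigl(X,\mathcal O_X(mK_X)\bigr)$, and likewise on $Y$. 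Granting these, with $d=\deg f$,
\[
h^0\bigl(Y,\mathcal O_Y(mK_Y)\bigr)=\chi\bigl(Y,\mathcal O_Y(mK_Y)\otimes f^*\mathcal J\bigr)=d\cdot\chi\bigl(X,\mathcal O_X(mK_X)\otimes\mathcal J\bigr)=d\cdot h^0\bigl(X,\mathcal O_X(mK_X)\bigr),
\]
the middle equality using $\mathcal O_Y(mK_Y)\otimes f^*\mathcal J=f^*\bigl(\mathcal O_X(mK_X)\otimes\mathcal J\bigr)$ and the Euler characteristic identity above. This is the assertion.

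An alternative, heavier but more natural from the minimal model program, runs as follows. By Birkar--Cascini--Hacon--M\textsuperscript{c}Kernan, $X$ has a canonical model $X_{\mathrm c}$ with canonical singularities and $K_{X_{\mathrm c}}$ ample, and $P_m(X)=P_m(X_{\mathrm c})$. Since the fundamental group is preserved by this crepant birational contraction, $Y$ induces an \'etale cover $\pi\colon Y_{\mathrm c}\to X_{\mathrm c}$ of degree $d$ which is the canonical model of $Y$, so $K_{Y_{\mathrm c}}=\pi^*K_{X_{\mathrm c}}$ and $P_m(Y)=P_m(Y_{\mathrm c})$. For $m\ge 2$, Kawamata--Viehweg (Koll\'ar-type) vanishing on the klt varieties $X_{\mathrm c}$ and $Y_{\mathrm c}$ gives $P_m=\chi$, and one concludes from $\chi\bigl(Y_{\mathrm c},\pi^*\mathcal G\bigr)=d\cdot\chi\bigl(X_{\mathrm c},\mathcal G\bigr)$ exactly as above.

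I expect the main obstacle, in the first proof, to be arranging that the two properties of $\mathcal J\bigl(\|(m-1)K_X\|\bigr)$ hold simultaneously --- this is precisely what forces the use of the asymptotic multiplier ideal rather than a base ideal or the multiplier ideal of a single divisor --- together with Nadel vanishing for asymptotic multiplier ideals, which is itself a genuine theorem (proved by approximating with ordinary multiplier ideals and passing to the stable value). The no-lost-sections property is the small clever step: for $s\in H^0\bigl(X,\mathcal O_X(mK_X)\bigr)$ and $b\gg 0$ the divisor of $s^{\otimes b(m-1)}$ lies in $|bm(m-1)K_X|$, hence $\mathcal O_X(-\mathrm{div}\,s)\subseteq\mathcal J\bigl(X,\tfrac{m-1}{m}\,\mathrm{div}\,s\bigr)\subseteq\mathcal J$, the first inclusion holding because $\tfrac{m-1}{m}\le 1$; keeping the indices ($m$ versus $m-1$) matched throughout is the one piece of delicate bookkeeping.
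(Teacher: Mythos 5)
Your first argument is exactly Lazarsfeld's proof as given in the paper (Nadel vanishing for $\mathcal J(\|(m-1)K_X\|)$ to convert $h^0$ into $\chi$, the no-lost-sections property, \'etale pullback of asymptotic multiplier ideals, and multiplicativity of $\chi$ under \'etale covers), and your alternative is the paper's second, MMP-based proof via canonical models; both are correct. The only cosmetic differences are that you re-derive the no-lost-sections step directly from $\mathcal O_X(-\mathrm{div}\,s)\subseteq\mathcal J\bigl(X,\tfrac{m-1}{m}\,\mathrm{div}\,s\bigr)$ where the paper cites the containment $\mathcal J(\|mK_X\|)\subseteq\mathcal J(\|(m-1)K_X\|)$, and that you descend the \'etale cover to the canonical models via $\pi_1$-invariance where the paper constructs $f_c:Y_c\to X_c$ explicitly through the relative canonical model and a formal-functions lemma.
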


Here, we will present Lazarsfeld's proof of Theorem \ref{f-thm4.1} following 
\cite[Theorem 11.2.23]{lazarsfeld}. 
It is actually an easy application of the theory of asymptotic multiplier ideal 
sheaves. We will give an alternative proof of Theorem \ref{f-thm4.1} 
after we discuss canonical models 
of smooth projective varieties of general type in Theorem \ref{f-thm4.5}. 

\begin{proof} 
Let $D$ be a big Cartier divisor on $X$. 
Then $\mathcal J(X, |\!| D|\!|)$ denotes the asymptotic multiplier ideal sheaf associated to 
the complete linear systems $|mD|$ for 
all $m\gg 0$. For the details of $\mathcal J(X, |\!| D |\!|)$, 
see \cite[Chapter 11]{lazarsfeld}. 
By the Nadel vanishing theorem (see \cite[Theorem 11.2.12 (ii)]{lazarsfeld}), 
$$
H^i(X, \mathcal O_X(mK_X)\otimes \mathcal J(X, |\!| (m-1)K_X|\!|))=0
$$ 
for every $i>0$ and every $m\geq 2$. 
Therefore, we have 
\begin{align*}
&h^0(X, \mathcal O_X(mK_X)\otimes \mathcal J(X, |\!| (m-1)K_X|\!|))
\\&= \chi (X, \mathcal O_X(mK_X)\otimes \mathcal J(X, |\!| (m-1)K_X|\!|))
\end{align*} 
for every $m\geq 2$. 
Since $\mathcal J(X, |\!| mK_X|\!|)\subset \mathcal J(X, |\!| (m-1)K_X|\!|)$ 
(see \cite[Theorem 11.1.8 (ii)]{lazarsfeld}), 
we have 
\begin{align*}
&H^0(X, \mathcal O_X(mK_X)\otimes \mathcal J(X, |\!| mK_X|\!|))\\ 
&= H^0(X, \mathcal O_X(mK_X)\otimes \mathcal J(X, |\!| (m-1)K_X|\!|))\\ 
&=H^0(X, \mathcal O_X(mK_X))
\end{align*} 
for every $m\geq 1$ by \cite[Proposition 11.2.10]{lazarsfeld}. 
Thus, we obtain 
$$
h^0(X, \mathcal O_X(mK_X))=\chi 
(X, \mathcal O_X(mK_X)\otimes \mathcal J(X, |\!| (m-1)K_X|\!|))
$$ 
for every $m\geq 2$. 
Similarly, 
we have 
$$
h^0(Y, \mathcal O_Y(mK_Y))=\chi 
(Y, \mathcal O_Y(mK_Y)\otimes \mathcal J(Y, |\!| (m-1)K_Y|\!|))
$$ 
for $m\geq 2$. 
Since $f$ is \'etale, $K_Y=f^*K_X$ and 
$$\mathcal J(Y, |\!| (m-1)K_Y|\!|) =f^* 
\mathcal J(X, |\!| (m-1)K_X|\!|)$$ 
by \cite[Theorem 11.2.16]{lazarsfeld}. 
Thus we have  
\begin{align*} 
&\chi (Y, \mathcal O_Y(mK_Y)\otimes \mathcal J(Y, |\!| (m-1)K_Y|\!|))
\\&= \chi (Y, f^*(\mathcal O_X(mK_X)\otimes \mathcal J (X, 
|\!|(m-1)K_X|\!|)))
\\ & =\deg f \cdot 
\chi (X, \mathcal O_X(mK_X)\otimes \mathcal J(X, |\!| (m-1)K_X|\!|))
\end{align*} 
for $m\geq 2$. 
Therefore, we obtain the desired equality 
$h^0(Y, \mathcal O_Y(mK_Y))=
\deg f \cdot h^0(X, \mathcal O_X(mK_X))$ for every $m \geq 2$. 
\end{proof}

The proof of Theorem \ref{f-thm4.1} says that 
$mK_X$ with $m\geq 2$ should be seen as $K_X+(m-1)K_X$. 
Since $m\geq 2$, $(m-1)K_X$ is big. 
Therefore, we can apply the Nadel vanishing theorem to 
\begin{align*}&\mathcal O_X(mK_X)\otimes \mathcal J(X, |\!| (m-1)K_X|\!|)\\ &=
\mathcal O_X(K_X+(m-1)K_X)\otimes \mathcal J(X, |\!| (m-1)K_X|\!|).
\end{align*} 
Obviously, the equality in Theorem \ref{f-thm4.1} does not hold 
for $m=1$. 

\begin{ex}\label{f-ex4.2} 
Let $C$ be a smooth projective curve with the genus $g(C)\geq 2$. 
Let $f: \widetilde C\to C$ be an \'etale cover with $\deg f=n\geq 2$. 
Then we have 
$$
2g(\widetilde C)-2=n(2g(C)-2)
$$ 
by Hurwitz's formula. 
This implies that 
$g(\widetilde C)=n(g(C)-1)+1$. 
Thus we have 
$$
h^0(\widetilde C, \mathcal O_{\widetilde C}(K_{\widetilde C}))
\ne n \cdot h^0(C, \mathcal O_C(K_C)). 
$$
\end{ex}

The following example also shows that 
$mK_X$ with $m\geq 2$ sometimes contains much more information 
than $K_X$. 

\begin{ex}[Godeaux surface]\label{f-ex4.3} 
We put $$
Y=(Z^5_0+Z^5_1+Z^5_2+Z^5_3=0)\subset \mathbb P^3. 
$$
Then $Y$ is a smooth projective 
surface such that 
$$
\mathcal O_Y(K_Y)=\mathcal O_{\mathbb P^3}(-4+5)|_Y=\mathcal O_Y(1)
$$ 
is very ample. 
Therefore, $Y$ is of general type, 
$$
h^0(Y, \mathcal O_Y(K_Y))=h^0(\mathbb P^3, \mathcal O_{\mathbb P^3}(1))=4, 
$$ 
and $q(Y)=h^1(Y, \mathcal O_Y)=0$. 
We put $G=\mathbb Z/ 5 \mathbb Z$. 
Then $G$ acts freely on $Y$ by 
$$
[Z_0: Z_1: Z_2: Z_3] \mapsto 
[Z_0: \zeta Z_1: \zeta^2 Z_2: \zeta^3 Z_3]
$$ 
where $\zeta=\exp \left (\frac{2\pi \sqrt{-1}}{5}\right)$. 
We put $X=Y/G$. 
Then $X$ is a smooth projective surface with 
ample canonical divisor. 
Let $f:Y\to X$ be the natural map. 
Then $f$ is a finite \'etale morphism. 
We can directly check that $h^0(X, \mathcal O_X(K_X))=0$ by 
$H^0(X, \mathcal O_X(K_X))=H^0(Y, \mathcal O_Y(K_Y))^G$. 
Note that $q(X)=h^1(X, \mathcal O_X)=0$, 
$K_Y^2=5$, and $K_X^2=1$. 
We also note that 
$X$ is known as a Godeaux surface. 
It is well-known that 
the linear system $|mK_X|$ gives an embedding 
into a projective space for every $m\geq 5$. 
Note that 
$$
4=h^0(Y, \mathcal O_Y(K_Y))\ne \deg f \cdot 
h^0(X, \mathcal O_X(K_X))=0. 
$$
\end{ex}

\begin{say}[Canonical models]\label{f-say4.4} 
Let us discuss canonical models 
of finite \'etale covers of smooth projective varieties of general type. 
The existence of 
canonical models was unkonwn when \cite{kollar-shafarevich} was written. 
Let $\pi:V\to W$ be a projective 
surjective morphism 
from a smooth quasiprojective 
variety $V$ onto a quasiprojective 
variety $W$. 
Assume that 
$K_V$ is $\pi$-big. 
Then, by \cite{bchm}, 
the (relative) canonical 
ring 
$$
R(V/W)=\bigoplus _{m=0}^\infty \pi_*\mathcal O_V(mK_V)
$$ 
is a finitely generated 
$\mathcal O_W$-algebra. 
We put 
$$
V_c=\mathrm{Proj} _W R(V/W)
$$ 
and call it the canonical model of $V$ over $W$ or 
the relative canonical model of $\pi:V\to W$. 
It is well-known that 
$V_c$ is birationally equivalent to 
$V$ over $W$, 
$V_c$ has only canonical singularities, 
and 
$K_{V_c}$ is ample over $W$. 
\end{say}

A finite \'etale morphism between smooth 
projective varieties of general type induces a natural finite 
\'etale morphism 
between their canonical models:  

\begin{thm}\label{f-thm4.5}
Let $f:Y\to X$ be a finite \'etale morphism 
between smooth projective varieties of general type. 
Let $X_c$ be the canonical model of $X$ and 
let $Y_c$ be the canonical model 
of $Y$. 
Then there exists a finite \'etale morphism 
$f_c: Y_c\to X_c$ such that 
$$
\xymatrix{
Y \ar[d]_{f}\ar@{-->}[r]& Y_c\ar[d]^{f_c} \\
X \ar@{-->}[r]& X_c 
}
$$ is commutative. 
\end{thm}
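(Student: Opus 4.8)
The plan is to construct $f_c$ directly from the finite étale cover $f\colon Y\to X$ by transporting the Galois-theoretic structure of $f$ to the canonical models. First I would reduce to the case where $f$ is Galois: any finite étale $f$ factors through its Galois closure, so if we can handle Galois covers we can compose and descend. So assume $f\colon Y\to X$ is Galois with group $G=\mathrm{Aut}(Y/X)$, acting freely on $Y$. Since $G$ acts on $Y$ by automorphisms, it acts on the canonical ring $R(Y)=\bigoplus_m H^0(Y,\mathcal O_Y(mK_Y))$ compatibly with the grading, hence acts on $Y_c=\mathrm{Proj}\,R(Y)$ by automorphisms. The key input from Theorem \ref{f-thm4.1} is that $R(Y)^G\cong R(X)$ in degrees $m\geq 2$ (the étale hypothesis gives $K_Y=f^*K_X$, and $H^0(Y,\mathcal O_Y(mK_Y))^G=H^0(X,\mathcal O_X(mK_X))$ as a direct consequence of $f$ being étale of degree $|G|$, matching the dimension count in Theorem \ref{f-thm4.1}). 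Since canonical models only depend on the truncated ring $\bigoplus_{m\geq m_0}$ for any $m_0$, this identifies $X_c$ with the quotient $Y_c/G$.

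The main steps would then be: (1) verify $G$ acts freely on $Y_c$; (2) conclude $Y_c\to Y_c/G=X_c$ is finite étale; (3) check the square commutes. For step (1), I would use that $Y_c$ has only canonical (hence klt) singularities and that there is a $G$-equivariant birational contraction $Y\dashrightarrow Y_c$ over which $K$ is preserved; a fixed point $p\in Y_c$ of some $1\neq g\in G$ would, after taking a $G$-equivariant resolution resolving the indeterminacy, force $g$ to have a fixed point on that resolution lying over $p$, while the action on $Y$ itself is free — one needs to see that the exceptional locus of $Y\dashrightarrow Y_c$ cannot absorb all the fixed points. More robustly, since $Y_c=\mathrm{Proj}\,R(Y)$ and $K_{Y_c}$ is ample, for $m\gg0$ the linear system $|mK_{Y_c}|$ embeds $Y_c$ equivariantly into $\mathbb P(H^0(Y,\mathcal O_Y(mK_Y))^\vee)$, and the same system pulled back to $Y$ (via the birational map) separates points up to the $G$-action in a way inherited from $Y$; I would argue that an isolated fixed point on $Y_c$ would contradict the étale-local structure. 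Step (2) is then automatic from freeness of a finite group action on a normal variety, and step (3) follows because both composites $Y\dashrightarrow Y_c\to X_c$ and $Y\xrightarrow{f}X\dashrightarrow X_c$ are the canonical maps induced by the $G$-invariant sections.

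The hard part will be step (1): ruling out fixed points of $G$ on $Y_c$. The subtlety is that the birational map $Y\dashrightarrow Y_c$ is not an isomorphism, so freeness on $Y$ does not formally transfer. The cleanest route is probably to work with a common $G$-equivariant resolution $Z\to Y$, $Z\to Y_c$ (which exists since everything is defined over $\mathbb C$ and one can take functorial resolutions), note $G$ acts freely on $Z$ because $Z\to Y$ is $G$-equivariant and birational with $G$ acting freely on the smooth variety $Y$ (any $g$-fixed point on $Z$ would map to a $g$-fixed point on $Y$, as the fiber of $Z\to Y$ over a free orbit is permuted freely — here one must be slightly careful, but the fixed locus of $g$ on $Z$ maps into the fixed locus of $g$ on $Y$, which is empty), and then $G$ acts freely on $Z$, hence the induced action on $Y_c$ (the image of $Z$) has the property that the quotient $Z/G$ is smooth away from... — actually the better formulation is: $Z/G$ is a smooth-in-codimension-one variety birational to $X$, dominating both $X_c$ and $Y_c/G$, and one shows $Y_c/G$ has canonical singularities with ample canonical class equal to the pushforward of $K$, so by uniqueness of canonical models $Y_c/G=X_c$ and the quotient map $\pi\colon Y_c\to Y_c/G$ must be étale in codimension one; finally étale-in-codimension-one plus $X_c$ having no nontrivial étale-in-codimension-one... no — one uses purity of the branch locus: $\pi$ is quasi-étale and $Y_c,X_c$ are normal, and ramification would produce a divisor on $X_c$ whose preimage on $Y$ would contradict $f$ being étale. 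This last implication is where I would spend the most care.
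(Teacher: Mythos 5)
Your setup is sound up to the last step: the reduction to the Galois case, the $G$-action on $R(Y)$ and hence on $Y_c$, the identification $R(X)\simeq R(Y)^G$ (which in fact holds in every degree, not just $m\geq 2$) and hence $X_c\simeq Y_c/G$, and the commutativity of the square are all fine. The genuine gap is exactly where you place it: proving that $G$ acts freely on $Y_c$, i.e.\ that $Y_c\to X_c$ is \'etale rather than merely quasi-\'etale. Your closing argument --- free in codimension one, then purity of the branch locus --- does not close it, because Zariski--Nagata purity requires the target to be regular (or a local complete intersection), and $X_c$ only has canonical singularities. A finite quasi-\'etale cover of a normal, even klt, variety need not be \'etale (quotient singularities are the standard example), and the paper's own Remark \ref{f-rem4.9} exhibits a square of precisely your shape --- $f$ \'etale upstairs, $h$ finite and \'etale away from one point downstairs --- where $h$ is not \'etale; the only hypothesis separating that counterexample from your situation is that the singularities there are not rational. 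So any correct argument must use the rationality of canonical singularities in an essential way, which your proposal never does.

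The paper supplies the missing ingredient as Lemma \ref{f-lem4.8}: in a commutative square with $X$, $Y$ smooth, $p$, $q$ projective birational, $f$ finite \'etale, $h$ finite, and $V$, $W$ normal with rational singularities, the map $h$ is \'etale. Its proof is not a purity argument but an Euler-characteristic computation: rationality gives $R^ip_*\mathcal O_X=0$ for $i>0$, a Hodge-theoretic $E_1$-degeneration for the simple normal crossing fiber $E=\Supp(p^{-1}(P))$ gives the surjectivity of $H^i(E,\mathbb C)\to H^i(E,\mathcal O_E)$, hence $\chi(E,\mathcal O_E)=1$; comparing with $\chi(f^{-1}(E),\mathcal O_{f^{-1}(E)})=\deg f$ forces $\sharp h^{-1}(P)=\deg f$, and the theorem on formal functions finishes. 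If you graft this lemma onto your quotient construction (applied to a $G$-equivariant resolution of $X\dashrightarrow X_c$ and its base change to $Y$), your approach does go through. Note also that the paper's route is structurally different: it takes the relative canonical model of $Y$ over $X_c$ and identifies it with $Y_c$ via the negativity lemma, with no Galois reduction --- but both roads must pass through Lemma \ref{f-lem4.8} or an equivalent.
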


The following proof was suggested by Yoshinori Gongyo. 

\begin{proof}
By taking an elimination of indeterminacy of 
$X\dashrightarrow X_c$ and 
the base change of $Y$, we may assume that 
$g:X\to X_c$ is a morphism. 
Let $Y'\to X_c$ be the relative canonical model of 
$g\circ f: Y\to X_c$ (see \cite{bchm} and 
\ref{f-say4.4}). 
Then, by using the negativity lemma (see, for example, 
\cite[Lemma 3.39]{kollar-mori}), 
we see that $K_{Y'}=f'^*K_{X_c}$ and 
that $f': Y'\to X_c$ is finite 
since $K_{Y'}$ is $f'$-ample. 
Therefore, $K_{Y'}$ is ample since $K_{X_c}$ is ample. 
This implies that 
$Y'=Y_c$, that is, 
$Y'$ is the canonical model 
of $Y$. 
Note that $Y$ is smooth and is finite over $X$. 
Therefore, $Y$ is the normalization of the 
main component of $X\times _{X_c}Y_c$. 
Thus we obtain the following commutative diagram: 
$$
\xymatrix{
Y \ar[d]_f \ar[r]& Y_c\ar[d]^{f_c}\\ 
X \ar[r]_g& X_c. 
}
$$
By Lemma \ref{f-lem4.8} below, 
we obtain that $f_c$ is a finite \'etale morphism. 
\end{proof}

By the proof of Theorem \ref{f-thm4.5}, 
we have: 

\begin{thm}\label{f-thm4.6}
Let $f:Y\to X$ be a finite \'etale morphism 
between smooth 
projective varieties. 
Let $X_m$ be a minimal model 
of $X$. 
Then we can construct a commutative diagram: 
$$
\xymatrix{
Y \ar[d]_{f}\ar@{-->}[r]^{\varphi}& \widetilde Y\ar[d]^{\widetilde f} \\
X \ar@{-->}[r]& X_m
}
$$ 
such that 
$\widetilde f: \widetilde Y\to X_m$ is a finite 
\'etale morphism and that $\varphi$ is birational. 
\end{thm}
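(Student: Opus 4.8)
The plan is to mimic the proof of Theorem \ref{f-thm4.5}, with the given minimal model $X_m$ playing the role of the canonical model there. First I would reduce to the case in which the birational map $X\dashrightarrow X_m$ is a morphism: taking a smooth projective variety $X'$ with birational morphisms to both $X$ and $X_m$ (an elimination of indeterminacy of $X\dashrightarrow X_m$), and replacing $X$ by $X'$ and $Y$ by the base change $Y\times_X X'$, we are reduced to a projective birational morphism $g\colon X\to X_m$ from a smooth projective variety $X$, together with a finite \'etale morphism $f\colon Y\to X$; the original $\varphi$ is then recovered as a composite of birational maps. Next I would let $\widetilde Y$ be the normalization of $X_m$ in the function field $k(Y)$ — the analogue here of the canonical model $Y_c$ in the proof of Theorem \ref{f-thm4.5} — which carries a finite morphism $\widetilde f\colon\widetilde Y\to X_m$; equivalently, $Y$ is the normalization of the main component of $X\times_{X_m}\widetilde Y$. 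Since $k(X)=k(X_m)$ and $k(\widetilde Y)=k(Y)$, there is a tautological birational map $\varphi\colon Y\dashrightarrow\widetilde Y$, and the square
$$
\xymatrix{
Y\ar[d]_{f}\ar@{-->}[r]^{\varphi}& \widetilde Y\ar[d]^{\widetilde f}\\
X\ar[r]_{g}& X_m
}
$$
commutes, since every arrow is induced by $k(X_m)=k(X)\hookrightarrow k(Y)$.

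The substantive point is that $\widetilde f$ is \'etale. Over the open subset $U\subseteq X_m$ whose complement has codimension $\geq 2$ and over which $g$ is an isomorphism, the base change of $f$ is finite \'etale, and $\widetilde Y\times_{X_m}U$ is exactly this \'etale cover of $U$ (it is already normal); hence $\widetilde f$ is \'etale in codimension one. To upgrade this to \'etaleness everywhere I would invoke Lemma \ref{f-lem4.8}: because $X_m$, being a minimal model, has terminal, in particular klt (hence rational), singularities, pullback along the resolution $g$ induces an equivalence between the categories of finite \'etale covers of $X_m$ and of $X$. Consequently the given cover $f\colon Y\to X$ is the pullback of a finite \'etale cover $Z\to X_m$; as $Z$ is normal with $k(Z)=k(Y)$, we get $\widetilde Y=Z$, so $\widetilde f$ is finite \'etale, as required. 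One could instead follow the proof of Theorem \ref{f-thm4.5} still more literally by constructing $\widetilde Y$ through a relative minimal model program for $Y$ over $X_m$ — which causes no difficulty here, since $g\circ f$ has relative dimension zero and $K_{Y/X_m}$ is numerically equivalent over $X_m$ to an effective $g$-exceptional divisor — and then applying the negativity lemma as in the proof of Theorem \ref{f-thm4.5} to see that $\widetilde f$ is crepant, before appealing to Lemma \ref{f-lem4.8} again.

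The genuine obstacle is exactly this last step, the passage from \lq\lq \'etale in codimension one\rq\rq\ (which is essentially automatic, as above) to \lq\lq \'etale\rq\rq: a finite crepant cover of a terminal variety need not be \'etale in general, and ruling this out here uses the mildness of the singularities of the minimal model $X_m$ in an essential way — this is precisely the content imported from Lemma \ref{f-lem4.8}. By contrast, the reduction to the case of a morphism, the birationality of $\varphi$, and the commutativity of the diagram are routine bookkeeping, and the relative-MMP alternative adds nothing essential beyond the observation that the program over $X_m$ terminates because the morphism is generically finite.
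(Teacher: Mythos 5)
Your overall strategy is the paper's: reduce to the case where $g\colon X\to X_m$ is a morphism, produce a normal variety $\widetilde Y$ finite over $X_m$ and birational to $Y$, and then use Lemma \ref{f-lem4.8} to promote ``\'etale in codimension one'' to ``\'etale''. In fact the route you relegate to an aside --- taking $\widetilde Y$ to be the relative canonical model of $g\circ f\colon Y\to X_m$ and arguing with the negativity lemma as in the proof of Theorem \ref{f-thm4.5} --- \emph{is} the paper's proof, almost verbatim.

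The gap is in your primary route. Lemma \ref{f-lem4.8} does not assert that pullback along the resolution $g$ is an equivalence between the categories of finite \'etale covers of $X_m$ and of $X$; it is a statement about one specific commutative square, and among its hypotheses is (iii): \emph{both} $V$ and $W$ must have only rational singularities. In your application $W=\widetilde Y$ is the bare normalization of $X_m$ in $k(Y)$, and you never check that it has rational singularities. This hypothesis is used essentially in the proof of the lemma (it gives $R^iq_*\mathcal O_Y=0$ and hence $\chi(E_Q,\mathcal O_{E_Q})=1$ for the fibers over $W$), so the lemma cannot simply be quoted; and the subsequent step ``consequently $f$ is the pullback of a finite \'etale cover of $X_m$'' is precisely the essential surjectivity that is at stake. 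The gap is fillable: $\widetilde f$ is finite and \'etale in codimension one over the terminal variety $X_m$, so $K_{\widetilde Y}=\widetilde f^*K_{X_m}$ and $\widetilde Y$ is again terminal (see, e.g., \cite[Proposition 5.20]{kollar-mori}), hence has rational singularities, and Lemma \ref{f-lem4.8} then applies. The cleaner fix, and the one the paper adopts, is to define $\widetilde Y$ from the start as the relative canonical model of $g\circ f$: it has canonical (hence rational) singularities by construction, and it coincides with your normalization because it is normal, finite over $X_m$, and has function field $k(Y)$.
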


For the definition of minimal models, 
recall Definition \ref{f-def3.12}. 

\begin{proof}
As in the proof of Theorem \ref{f-thm4.5}, 
we may assume that $g:X\to X_m$ is a morphism. 
Let $\widetilde f: \widetilde Y\to X_m$ be the relative 
canonical model of $g\circ f: Y\to X_m$. 
Then, by the proof of Theorem \ref{f-thm4.5}, 
$\varphi:Y\dashrightarrow \widetilde Y$ and 
$\widetilde f: \widetilde Y\to X_m$ satisfy the 
desired properties. 
\end{proof}

Related to Theorem \ref{f-thm4.5} and Theorem \ref{f-thm4.6}, 
we have: 

\begin{problem}\label{f-prob4.7}
Find a projective variety $X$ such that 
$X$ has only $\mathbb Q$-factorial 
terminal (or canonical) singularities with nef (or ample) canonical divisor and 
a finite \'etale morphism $f:Y\to X$ such that 
$Y$ is not $\mathbb Q$-factorial. 
\end{problem}

The following lemma is a special case of \cite[Lemma 3.9]{nakayama-zhang}. 

\begin{lem}\label{f-lem4.8}
We consider the following commutative diagram of 
quasiprojective varieties: 
$$
\xymatrix{
Y \ar[r]^	q\ar[d]_f& W \ar[d]^h\\ 
X \ar[r]_p& V 
}
$$ 
such that 
\begin{itemize}
\item[(i)] $X$ and $Y$ are smooth, 
\item[(ii)] $p$ and $q$ are projective birational morphisms, 
\item[(iii)] $V$ and $W$ are normal and have only rational singularities, 
\item[(iv)] $f$ is a finite \'etale morphism, and 
\item[(v)] $h$ is finite. 
\end{itemize} 
Then $h$ is an \'etale morphism. 
\end{lem}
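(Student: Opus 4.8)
The plan is to exploit purity of the branch locus together with the rational singularities hypothesis to control the ramification of $h$. Since $h$ is finite and $W$, $V$ are normal, by the Zariski--Nagata purity theorem it suffices to prove that $h$ is unramified in codimension one; equivalently, that the ramification divisor $R_h$ on $W$ is empty. So the core of the argument is to locate any component of $R_h$ and derive a contradiction by pulling back along $q$ and comparing with the \'etale map $f$. First I would set up the relation among the relative canonical sheaves. Because $p$ and $q$ are birational morphisms with $X$, $Y$ smooth and $V$, $W$ normal with rational singularities, we have $K_Y = q^*K_W + E_q$ and $K_X = p^*K_V + E_p$ with $E_q$, $E_p$ effective and $q$-, $p$-exceptional respectively (the rational singularities hypothesis guarantees $K_W$, $K_V$ are well-defined $\mathbb{Q}$-Cartier Weil divisorial sheaves and that these discrepancy formulas hold with effective correction terms). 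On the other hand $f$ \'etale gives $K_Y = f^*K_X$ exactly, while $h$ finite between normal varieties gives $K_W = h^*K_V + R_h$ with $R_h \geq 0$ the (effective) ramification divisor.

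Next I would combine these. Using commutativity $p\circ f = h\circ q$, we compute
$$
q^*K_W + E_q = K_Y = f^*K_X = f^*p^*K_V + f^*E_p = q^*h^*K_V + f^*E_p,
$$
hence $q^*R_h = q^*(K_W - h^*K_V) = f^*E_p - E_q$. The right-hand side is a difference of a $q$-exceptional effective divisor $E_q$ and the pullback $f^*E_p$, which is $q$-exceptional because $E_p$ is $p$-exceptional and the square commutes (the finite map $f$ sends the $q$-exceptional locus into the $p$-exceptional locus). Therefore $q^*R_h$ is $q$-exceptional. But $R_h$ is an effective divisor on $W$, so $q^*R_h$ is the pullback of an honest divisor from the target of the birational morphism $q$; such a pullback cannot be $q$-exceptional unless it is zero. (Concretely: push forward by $q_*$ to get $R_h = q_*q^*R_h = q_*(f^*E_p - E_q) = 0$, using that $q_*$ kills $q$-exceptional divisors and $q_*q^*$ is the identity on divisors coming from $W$.) Hence $R_h = 0$, i.e.\ $h$ is unramified in codimension one, and by purity of the branch locus $h$ is \'etale.

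The step I expect to be the main obstacle is making the discrepancy bookkeeping fully rigorous in the singular setting: namely, justifying that $E_q$ and $E_p$ are \emph{effective} and \emph{exceptional}, and that $f^*E_p$ is again $q$-exceptional. Effectivity of $E_q$ is exactly where rational (hence canonical, even log terminal in the Gorenstein case via \cite[Lemma 6.1]{abramovich-karu}-type statements) singularities of $W$ enters: a resolution of a variety with rational singularities has non-negative discrepancies. For $f^*E_p$ being $q$-exceptional one needs that $f$ maps $\mathrm{Exc}(q)$ into $\mathrm{Exc}(p)$, which follows since $f$ is finite (so does not contract divisors) and the diagram commutes, forcing $q(\mathrm{Exc}(q))$-images and $p(\mathrm{Exc}(p))$-images to match up appropriately; one should check this at the level of prime divisors. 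Once these points are nailed down, the rest is the short divisor-class computation above plus an invocation of Zariski--Nagata purity, exactly as in \cite[Lemma 3.9]{nakayama-zhang}, of which this is the stated special case.
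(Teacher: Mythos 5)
Your argument breaks down at its final step, and unfortunately that step is where the entire content of the lemma lives. The Zariski--Nagata purity theorem requires the target of the finite morphism to be \emph{regular}; it fails over normal, and even over rational, singularities. The standard counterexample is the quotient map $\mathbb{C}^2\to \mathbb{C}^2/(\mathbb{Z}/2\mathbb{Z})$: the target has a rational (quotient) singularity and the map is \'etale in codimension one, but it is not \'etale over the origin. (Remark \ref{f-rem4.9} makes the same point with a non-rational singularity: there $h$ is \'etale in codimension one yet not \'etale.) Moreover, the conclusion you actually reach, $R_h=0$, is essentially free and needs no discrepancy bookkeeping: by Zariski's main theorem $q$ and $p$ are isomorphisms over the complements of codimension-two subsets of $W$ and $V$, so near the generic point of any prime divisor of $W$ the map $h$ is identified with the \'etale map $f$. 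The whole difficulty of the lemma is the behaviour of $h$ over the codimension $\geq 2$ locus, i.e.\ over the singular points of $V$, and your proof does not address it. A secondary problem is that rational singularities need not be $\mathbb{Q}$-Gorenstein in dimension $\geq 3$, so $p^*K_V$ and $q^*K_W$ need not be defined; and even when they are, rational does not imply canonical, so the effectivity of $E_p$ and $E_q$ is not available (rational plus \emph{Gorenstein} implies canonical, but that is not among the hypotheses).

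The paper's proof is entirely different and uses the rational singularities hypothesis in another way. For each $P\in V$ one arranges $E=\Supp(p^{-1}(P))$ to be a simple normal crossing divisor; the vanishing $R^ip_*\mathcal O_X=0$ (this is where rationality of $V$ enters), combined with the Hodge-theoretic surjectivity of $H^i(E,\mathbb C)\to H^i(E,\mathcal O_E)$ for the projective snc variety $E$, gives $H^i(E,\mathcal O_E)=0$ for $i>0$ and hence $\chi(E,\mathcal O_E)=1$, and likewise $\chi(E_Q,\mathcal O_{E_Q})=1$ for each $Q\in h^{-1}(P)$. Since $f$ is \'etale, $\chi(f^{-1}(E),\mathcal O_{f^{-1}(E)})=\deg f$, so $\sharp h^{-1}(P)=\deg f$ for \emph{every} $P$; this forces $f:E_Q\to E$ to be an isomorphism for each $Q$, and the theorem on formal functions then gives $\widehat{\mathcal O}_{V,P}\simeq \widehat{\mathcal O}_{W,Q}$, i.e.\ $h$ is \'etale. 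To repair your argument you would have to replace the appeal to purity by some such analysis over the singular points of $V$.
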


We give a proof for the reader's convenience. 
It is interesting for me that 
the proof of Lemma \ref{f-lem4.8} below uses the 
$E_1$-degeneration of 
Hodge to de Rham type spectral sequences 
for projective 
simple normal crossing varieties. 
 
\begin{proof}
We take an arbitrary point $P\in V$. 
By taking a birational modification of $X$ and 
the base change of $Y$, we may assume that $E=\Supp (p^{-1}(P))$ 
is a simple normal crossing divisor on $X$. 
Since $f$ is \'etale, $f^{-1}(E)$ is a simple normal crossing divisor on $Y$. 
Note that $f^{-1}(E)$ is the disjoint union of $E_Q=\Supp (q^{-1}(Q))$ for 
points $Q\in h^{-1}(P)$. 
Since $V$ has only rational singularities, 
$R^ip_*\mathcal O_X=0$ for every $i>0$. 

\begin{claim}
The natural map 
$$
\pi:H^i(E, \mathbb C)\to H^i(E, \mathcal O_E)
$$ 
induced by the inclusion 
$\mathbb C_E\hookrightarrow \mathcal O_E$ is surjective for every $i$. 
\end{claim}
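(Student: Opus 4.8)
The plan is to deduce the surjectivity of $\pi\colon H^i(E,\mathbb C)\to H^i(E,\mathcal O_E)$ from the $E_1$-degeneration of the Hodge to de Rham spectral sequence for the projective simple normal crossing variety $E$. Recall that $E=\Supp(p^{-1}(P))$ is a reduced simple normal crossing divisor, hence a projective simple normal crossing variety, so $E$ carries a natural mixed Hodge structure on $H^\bullet(E,\mathbb C)$ computed by a Hodge–de Rham complex $\Omega^\bullet_E$ (the Du Bois complex, which for a simple normal crossing variety is represented by the usual complex of Kähler-type forms with the combinatorial differentials). The key input is that the spectral sequence
$$
E_1^{p,q}=H^q(E,\Omega^p_E)\Rightarrow H^{p+q}(E,\mathbb C)
$$
degenerates at $E_1$. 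This is exactly the \lq\lq $E_1$-degeneration of Hodge to de Rham type spectral sequences for projective simple normal crossing varieties\rq\rq\ invoked in the paragraph preceding the claim, so I am entitled to use it.

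Given the $E_1$-degeneration, the argument is formal. First I would note that the $0$-th piece of the Hodge filtration is $F^0\Omega^\bullet_E=\Omega^\bullet_E$ and the quotient $\Omega^\bullet_E/F^1\Omega^\bullet_E$ is quasi-isomorphic to $\mathcal O_E$ sitting in degree $0$ (for a simple normal crossing variety the structure sheaf is the degree-zero term and the higher terms of $F^1$ contribute nothing to this quotient). The inclusion $\mathbb C_E\hookrightarrow\mathcal O_E$ factors through $\mathbb C_E\hookrightarrow\Omega^\bullet_E\twoheadrightarrow\mathcal O_E$, so $\pi$ is the composition
$$
H^i(E,\mathbb C)\xrightarrow{\ \sim\ }\mathbb H^i(E,\Omega^\bullet_E)\longrightarrow H^i(E,\mathcal O_E),
$$
the second map being induced by $\Omega^\bullet_E\to\mathcal O_E$. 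By general nonsense about filtered complexes, $E_1$-degeneration says that the Hodge filtration on $\mathbb H^i(E,\Omega^\bullet_E)$ has associated graded pieces $H^{i-p}(E,\Omega^p_E)$, and in particular the edge map $\mathbb H^i(E,\Omega^\bullet_E)\to H^i(E,\mathrm{gr}^0)=H^i(E,\mathcal O_E)$ is surjective because it is the projection onto the top graded quotient $\mathrm{gr}^0_F$. Combining these identifications gives surjectivity of $\pi$.

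The only real subtlety—and the step I expect to be the main obstacle to write cleanly—is making precise which complex $\Omega^\bullet_E$ to use and why its degree-$0$ truncation quotient is genuinely $\mathcal O_E$, so that the map $\mathbb C_E\hookrightarrow\mathcal O_E$ really is the composite above. For a simple normal crossing variety this is standard (one uses the combinatorial resolution by the disjoint unions of intersections of components, and the associated weight/Hodge filtered complex of Deligne–Du Bois type; see the references to mixed Hodge theory for simple normal crossing varieties), but it requires invoking the correct Hodge-theoretic package rather than the smooth $E_1$-degeneration of \ref{f-say2.7}. Once that package and its $E_1$-degeneration are in hand, the surjectivity of $\pi$ is immediate from the surjectivity of the edge homomorphism onto $\mathrm{gr}^0_F$. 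I would phrase the proof so that it cites this $E_1$-degeneration as a black box and then spends one sentence identifying $\pi$ with the $\mathrm{gr}^0_F$ edge map.
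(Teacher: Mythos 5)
Your proposal is correct and follows essentially the same route as the paper: the paper constructs the relevant filtered complex via the Mayer--Vietoris simplicial resolution of $E$ (a cohomological mixed Hodge complex, i.e.\ exactly the Du Bois-type package you describe), uses $E_1$-degeneration of the $F$-filtration spectral sequence, identifies $\mathrm{Gr}^0_F$ with $\mathcal O_E$, and concludes surjectivity of $\pi$ as the edge map onto $\mathrm{Gr}^0_F$. The subtlety you flag (which complex to use and why its $\mathrm{Gr}^0_F$ is $\mathcal O_E$) is precisely the point the paper settles by the direct check on the Mayer--Vietoris resolution.
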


\begin{proof}[Proof of Claim]
By using the Mayer--Vietoris simplicial resolution of a projective 
simple normal crossing variety $E$, 
we can construct a cohomological mixed Hodge complex 
$(K_{\mathbb Z}, (K_{\mathbb Q}, W_{\mathbb Q}), (K_{\mathbb C}, W, F))$ 
which induces a natural mixed Hodge  structure on $H^{\bullet}(E, \mathbb Z)$. 
By the theory of mixed Hodge structures, we see that  
$$
E^{p, q}_1=\mathbb H^{p+q}(E, \mathrm{Gr}^p_FK_{\mathbb C})\Rightarrow 
H^{p+q}(E, \mathbb C)
$$ 
degenerates at $E_1$. 
We can directly check that $\mathrm{Gr}^0_F K_{\mathbb C}$ is quasi-isomorphic 
to $\mathcal O_E$ by using the Mayer--Vietoris simplicial resolution of $E$. 
Thus, 
we obtain that 
$$
\pi:H^i(E, \mathbb C)\to H^i(E, \mathcal O_E)
$$ 
is surjective for every $i$. 
\end{proof}

By the following commutative diagram: 
$$
\xymatrix{
(R^ip_*\mathbb C_X)_P \ar[d]\ar[r]^{\simeq} & H^i(E, \mathbb C)\ar[d]^{\pi}\\
(R^ip_*\mathcal O_X)_P\ar[r]&H^i(E, \mathcal O_E), 
}
$$ 
we obtain that $H^i(E, \mathcal O_E)=0$ for every $i>0$, 
as $R^ip_*\mathcal O_X=0$ for 
every $i>0$. 
Thus, we obtain $\chi (E, \mathcal O_E)=1$. 
By the same argument, 
we have $\chi (E_Q, \mathcal O_{E_Q})=1$. 
On the other hand, 
$$
\chi (f^{-1}(E), \mathcal O_{f^{-1}(E)})=\deg f\cdot \chi (E, \mathcal O_E)=\deg f
$$ 
since $f$ is \'etale. 
Therefore, we have 
$$
\sharp h^{-1}(P)=\sum _{Q\in h^{-1}(P)}\chi( E_Q, \mathcal O_{E_Q})
=\chi (f^{-1}(E), \mathcal O_{f^{-1}(E)})=\deg f. 
$$ 
This implies that 
$f: E_Q\to E$ is an isomorphism for every $Q\in h^{-1}(P)$. 
Thus, by the theorem on formal functions, we have 
$$
\widehat {\mathcal O}_{V, P}=(p_*\mathcal O_X)^{\wedge}_P
\simeq (q_*\mathcal O_Y)^{\wedge}_Q=\widehat{\mathcal O}_{W, Q}
$$ 
for every $Q\in h^{-1}(P)$. 
Thus, $h$ is an \'etale morphism. 
\end{proof}

We give an alternative proof of 
Theorem \ref{f-thm4.1}, 
one that is natural from the minimal model theoretic viewpoint. 
Unfortunately, it may be 
more complicated than Lazarsfeld's proof given before. 

\begin{proof}[Proof of Theorem \ref{f-thm4.1}]
By Theorem \ref{f-thm4.5}, 
we may replace $f:Y\to X$ with $f_c: Y_c\to X_c$. 
Note that 
$$
h^0(X_c, \mathcal O_{X_c}(mK_{X_c}))=h^0(X, \mathcal O_X(mK_X))
$$
and 
$$
h^0(Y_c, \mathcal O_{Y_c}(mK_{Y_c}))=h^0(Y, \mathcal O_Y(mK_Y))
$$ 
for every $m\geq 1$. 
By the Kawamata--Viehweg vanishing theorem for 
singular varieties (see, for example, \cite[Corollary 5.7.7]{fujino-foundation}), 
we have 
$$
H^i(X_c, \mathcal O_{X_c}(mK_{X_c}))=H^i(Y_c, \mathcal O_{Y_c}(mK_{Y_c}))
=0
$$ 
for every $i>0$ and every $m\geq 2$. 
We also note that 
$f^*_c\mathcal O_{X_c}(mK_{X_c})=\mathcal O_{Y_c}(mK_{Y_c})$ 
for every $m$ since $f_c$ is \'etale. 
Therefore, we obtain 
\begin{align*}
h^0(Y_c, \mathcal O_{Y_c}(mK_{Y_c}))&=\chi 
(Y_c, \mathcal O_{Y_c}(mK_{Y_c})) 
\\& =\deg f_c\cdot \chi (X_c, \mathcal O_{X_c}(mK_{X_c})) 
\\& =\deg f_c \cdot h^0(X_c, \mathcal O_{X_c}(mK_{X_c}))
\end{align*} 
for every $m\geq 2$. 
This implies the desired equality. 
\end{proof}

\begin{rem}\label{f-rem4.9}
In Lemma \ref{f-lem4.8}, the assumption that $V$ and $W$ have only 
rational singularities (see (iii) in Lemma \ref{f-lem4.8}) is indispensable. 

Consider the following example. 
Let $C\subset \mathbb P^2$ be an elliptic curve and 
let $V\subset \mathbb P^3$ be a cone over $C\subset \mathbb P^2$. 
Let $p: X\to V$ be the blow-up at the vertex $P$ of $V$ and 
let $E$ be the $p$-exceptional 
divisor on $X$. 
Note that there is a natural $\mathbb P^1$-bundle structure 
$\pi:X\to C$ and $E$ is a section of $\pi$. 
We take a nontrivial finite \'etale cover $D\to C$. 
We put $Y=X\times _C D$ and $F=E\times _C D$. 
Let $H$ be an ample Cartier divisor on $V$. 
We consider $q=\Phi_{|mf^*p^*H|}: Y\to W$ for a sufficiently 
large positive integer $m$. 
Note that $q$ contracts $F$ to an isolated 
normal singular point $Q$ of $W$. 
Then we have the following 
commutative diagram 
$$
\xymatrix{
Y \ar[r]^{q}\ar[d]_f & W \ar[d]^{h}\\ 
X\ar[r]_p& V
}
$$
such that $f$ is \'etale, $h$ is finite, but $h$ is not \'etale. 
Note that $h^{-1}(P)=Q$ since $f^{-1}(E)=F$. 
It is also false that 
the singularities of $V$ and $W$ are rational.   
\end{rem}

\subsection{Viehweg's ampleness theorem}\label{f-subsec4.2}
We treat direct images of relative pluricanonical bundles. 
The following theorem is buried in Viehweg's papers (see \cite{viehweg1} and 
\cite{viehweg2}). 
The statement seems to be magical. 

\begin{thm}[Viehweg]\label{f-thm4.10} 
Let $f:X\to Y$ be a surjective morphism from a smooth projective variety 
$X$ onto a smooth projective curve $Y$ with connected fibers. 
Then $f_*\omega^{\otimes m}_{X/Y}$ is nef for every 
positive integer $m$. In particular, we have  
$\deg \det f_*\omega^{\otimes m}_{X/Y}\geq 0$ for 
every positive integer $m$. 
Assume that $f$ is semistable. 
If $\deg \det f_*\omega^{\otimes k}_{X/Y}>0$, that is, 
$\det f_*\omega^{\otimes k}_{X/Y}$ is ample, for 
some positive integer $k$, then 
$f_*\omega^{\otimes k'}_{X/Y}$ is ample, where 
$k'$ is any multiple of $k$ with $k'\geq 2$. 
\end{thm}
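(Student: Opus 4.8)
The plan is to split Theorem \ref{f-thm4.10} into its three assertions and treat them in order. First, the nefness of $f_*\omega^{\otimes m}_{X/Y}$ for every $m$ is exactly a special case of Theorem \ref{f-thm3.7} when $m=1$, and for general $m$ it follows from Viehweg's weak positivity of $f_*\omega^{\otimes m}_{X/Y}$ combined with the fact that, over a smooth curve, a weakly positive locally free sheaf is nef; alternatively one can invoke Theorem \ref{f-thm3.22} after a semistable reduction, or simply cite Theorem \ref{f-thm4.10}'s counterpart for weakly semistable morphisms (Theorem \ref{f-thm3.17}) once one reduces to the semistable case by base change and descent of nefness under finite surjective maps of curves. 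The inequality $\deg\det f_*\omega^{\otimes m}_{X/Y}\ge 0$ is then immediate, since a nef vector bundle has nef (hence nonnegative-degree) determinant on a curve.

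For the last assertion I would first record the identification $\omega_{X/Y}^{\otimes k}$-behavior under fiber products: if $X^s = X\times_Y\cdots\times_Y X$ ($s$ copies) with a suitable resolution $X^{(s)}\to X^s$, then Viehweg's fiber-product trick gives a generically surjective sheaf map from $\bigotimes^s f_*\omega^{\otimes k}_{X/Y}$ to $f^{(s)}_*\omega^{\otimes k}_{X^{(s)}/Y}$, and semistability is preserved by this construction (this uses that $f$ is semistable). The key numerical input is that $\deg\det f_*\omega^{\otimes k}_{X/Y}>0$ forces the slopes to grow: taking symmetric/tensor powers and applying the fiber-product trick, one produces sections of $f_*\omega^{\otimes k'}_{X/Y}\otimes \mathcal{O}_Y(-A)$ for an ample $A$, i.e. one shows $f_*\omega^{\otimes k'}_{X/Y}\otimes\mathcal{O}_Y(-A)$ is still weakly positive (hence nef on the curve) for suitable positive $A$ depending on the positive degree; twisting back shows $f_*\omega^{\otimes k'}_{X/Y}$ is ample by the numerical criterion for ampleness of bundles on curves (every quotient has positive degree). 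The condition $k'\ge 2$ is needed so that $\omega^{\otimes k'}_{X^{(s)}/Y}$ can be written as $\omega_{X^{(s)}/Y}\otimes(\text{big, effective relative divisor})$ and Koll\'ar-type vanishing / weak positivity applies; $k' $ a multiple of $k$ is needed so the positive-degree line bundle $\det f_*\omega^{\otimes k}$ actually divides into the construction.

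The main obstacle I expect is the passage from "positive degree of the determinant at level $k$" to "ampleness of the full bundle at level $k'$" — that is, controlling all quotient bundles, not just the determinant. A priori $\det f_*\omega^{\otimes k}_{X/Y}$ having positive degree only says the \emph{average} slope is positive; a nef bundle with positive-degree determinant need not be ample (it could have a degree-zero quotient). The way around this is precisely the fiber-product trick: ampleness of $\mathcal{E}$ on a curve is equivalent to ampleness of $\det$ of every subsheaf... no — rather, one uses that for a nef bundle $\mathcal{E}$ on a smooth curve, $\mathcal{E}\otimes\mathcal{O}_Y(-A)$ being weakly positive for some ample $A$ is equivalent to ampleness, and weak positivity of the twisted fiber-product sheaves $\bigotimes^s f_*\omega^{\otimes k}_{X/Y}$ can be bootstrapped from the single inequality $\deg\det f_*\omega^{\otimes k}_{X/Y}>0$ by Viehweg's argument (the positivity "spreads" because $\omega$ is relative and the fiber product multiplies degrees). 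So the real work is checking that semistability survives all the base changes, resolutions, and fiber products involved, and that the $\omega_{X/Y}$ versus $\omega_{X^{(s)}/Y}$ comparison is an isomorphism in the relevant degree on the locus where it matters; for this I would follow Viehweg's treatment in \cite{viehweg1} and \cite{viehweg2} closely rather than reprove it.
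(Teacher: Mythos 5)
Your proposal follows essentially the same route as the paper: nefness via Viehweg's weak positivity (equivalently Kawamata's theorem for fiber spaces over curves) together with the fact that weak positivity over a smooth curve is nefness, and ampleness via Viehweg's bigness machinery --- the fiber-product trick converting $\deg\det f_*\omega^{\otimes k}_{X/Y}>0$ into a generically isomorphic injection of ample line bundles into $S^\nu(f_*\omega^{\otimes k'}_{X/Y})$, hence ampleness by the criterion on curves --- with the hard steps delegated to \cite{viehweg1} and \cite{viehweg2} exactly as the paper does. The only caveat is that your alternative suggestion of invoking Theorem \ref{f-thm3.22} does not apply here since no good minimal model hypothesis is available, but your primary argument does not rely on it.
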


\begin{proof}
From Kawamata (see \cite[Theorem 1]{kawamata-curves}), 
we have that $f_*\omega^{\otimes m}_{X/Y}$ is nef for every $m\geq 1$. 
Or, by Viehweg's weak positivity:~\cite[Theorem III]{viehweg1} (see 
also \cite[Theorem 4.3 and Theorem 5.5]{fujino-subadditivity} and 
Remark \ref{f-rem4.14} below), 
$f_*\omega^{\otimes m}_{X/Y}$ is weakly positive for every $m\geq 1$. 
Since $Y$ is a smooth projective curve, 
the weak positivity implies that 
$f_*\omega^{\otimes m}_{X/Y}$ is nef for every $m\geq 1$. 
By \cite[Theorem 3.5]{viehweg2} (see also 
\cite[Theorem 5.11]{fujino-subadditivity}), 
$\deg \det f_*\omega^{\otimes k}_{X/Y}>0$ implies that 
$f_*\omega^{\otimes k'}_{X/Y}$ is big in the sense of Viehweg 
(see \cite[Definition 3.1]{fujino-subadditivity}) when $f$ is semistable. 
Then, by \cite[Lemma 3.6]{viehweg2} (see also \cite[Lemma 3.7]{fujino-subadditivity}), 
there is a generically isomorphic injection 
$$
\bigoplus _r \mathcal A \to S^\nu (f_*\omega^{\otimes k'}_{X/Y})
$$ 
for some ample invertible sheaf $\mathcal A$ on $Y$ and 
some positive integer $\nu$, where 
$r=\mathrm{rank}\, S^\nu (f_*\omega^{\otimes k'}_{X/Y})$. 
This implies that $S^\nu (f_*\omega^{\otimes k'}_{X/Y})$ is ample by \cite[Theorem 
6.4.15]{lazarsfeld}. 
Therefore, $f_*\omega^{\otimes k'}_{X/Y}$ is ample by 
\cite[Proposition (2.4)]{hartshorne-ample}. 
\end{proof}

Roughly speaking, Theorem \ref{f-thm4.10} says if 
$f_*\omega_{X/Y}$ is a little bit positive then 
$f_*\omega^{\otimes m}_{X/Y}$ is very positive 
for $m\geq 2$. Viehweg's arguments in \cite{viehweg1} and 
\cite{viehweg2} (see also \cite{fujino-subadditivity}) use 
his clever covering trick and 
fiber product trick. 
They are geometric. 
It seems to be very important to find a more direct approach 
to Theorem \ref{f-thm4.10}. Thus, we have: 

\begin{problem}\label{f-prob4.11} 
Find an analytic (and more direct) proof of Theorem \ref{f-thm4.10}. 
\end{problem}

The example by Catanese--Dettweller (see \cite{catanese1}, 
\cite{catanese2}, and \cite{catanese3}) 
below says that the condition $k'\geq 2$ in Theorem \ref{f-thm4.10} 
is indispensable. 

\begin{ex}[Catanese--Dettweller]\label{f-ex4.12} 
There exist a smooth projective 
surface $X$ of general type and a smooth 
projective curve $Y$ such that 
$f:X\to Y$ is semistable and 
that $f_*\omega_{X/Y}=A\oplus Q$, where 
$A$ is an ample vector bundle of rank $2$ and $Q$ 
is a unitary flat vector bundle of rank $4$. 
Moreover, $Q$ is not semiample.  
In this case, $\deg \det f_*\omega_{X/Y}>0$. 
By Theorem \ref{f-thm4.10}, 
$f_*\omega^{\otimes m}_{X/Y}$ is ample for every $m\geq 2$. 
However, 
$f_*\omega_{X/Y}$ is not ample. 
\end{ex}

Note that the construction of Example \ref{f-ex4.12} in \cite{catanese2} 
depends on the theory of variation of Hodge structure. 
For the details, see \cite{catanese1}, 
\cite{catanese2}, and \cite{catanese3}. 

\begin{problem}\label{f-prob4.13}
Find similar examples to Example \ref{f-ex4.12} 
that do not use the theory of variation of Hodge structure. 
\end{problem}

Although we do not discuss Viehweg's weak positivity in this paper, 
we give a remark on the weak positivity of 
$f_*\omega^{\otimes m}_{X/Y}$ for the interested reader:  

\begin{rem}\label{f-rem4.14}
Let $f:X\to Y$ be a surjective 
morphism between smooth projective 
varieties with connected fibers. 
Then $f_*\omega^{\otimes m}_{X/Y}$ is weakly positive 
for every positive integer $m$ by 
Viehweg (see \cite{viehweg1}). 
Viehweg's original proof of his weak positivity 
uses Theorem \ref{f-thm3.5}. 
Given what we know now, 
we can prove the weak positivity of $f_*\omega^{\otimes m}_{X/Y}$ by Koll\'ar's 
vanishing theorem:~Theorem \ref{f-thm2.5} (ii). 
Moreover, Theorem \ref{f-thm3.18} drastically simplifies the proof of 
Viehweg's 
weak positivity of $f_*\omega^{\otimes m}_{X/Y}$. 
For the details, see \cite{fujino-subadditivity}. 
\end{rem}

Anyway, we should consider not only $K_X$ but also $mK_X$ with 
$m\geq 2$ in order to 
understand complex projective varieties much better. 

\section{On finite generation of (log) canonical rings}\label{f-sec5}

In this section, we quickly discuss the finite 
generation of (log) canonical rings due to 
Birkar--Cascini--Hacon--M\textsuperscript{c}Kernan 
(see \cite{bchm}) and some related topics (see \cite{fujino-mori} and 
\cite{fujino-some}). 
For simplicity, we only treat the absolute setting in this section. 
However, the relative setting is very important and 
is indispensable for 
some applications (cf.~Remark \ref{f-rem3.19}). 

\begin{thm}\label{f-thm5.1} 
Let $X$ be a smooth projective variety and let $\Delta$ be an effective 
$\mathbb Q$-divisor on $X$ such that 
$\Supp \Delta$ is a simple normal crossing divisor and 
that the coefficients of $\Delta$ are less than one. 
Assume that $K_X+\Delta$ is big. 
Then the log canonical ring 
$$
R(X, K_X+\Delta)=\bigoplus _{m=0} ^\infty H^0(X, \mathcal O_X(
\lfloor m(K_X+\Delta)\rfloor))
$$ 
is a finitely generated $\mathbb C$-algebra. 
\end{thm}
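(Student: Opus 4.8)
The plan is to derive Theorem \ref{f-thm5.1} from the minimal model program, specifically from the work of Birkar--Cascini--Hacon--M\textsuperscript{c}Kernan \cite{bchm}. The first step is to record that $(X,\Delta)$ is a kawamata log terminal pair: since $X$ is smooth and $\Supp\Delta$ is a simple normal crossing divisor, the pair $(X,\Delta)$ is log smooth, and a log smooth pair all of whose boundary coefficients lie in $[0,1)$ is klt. By assumption $K_X+\Delta$ is big, so $(X,\Delta)$ is a klt pair of log general type, which is exactly the situation handled by \cite{bchm}.

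The second step is to run a $(K_X+\Delta)$-minimal model program. By \cite{bchm}, a klt pair of log general type admits a good minimal model: after finitely many divisorial contractions and flips we reach a projective $\mathbb Q$-factorial pair $(X',\Delta')$, with $\Delta'$ the strict transform of $\Delta$, such that $K_{X'}+\Delta'$ is semiample. (If one prefers to appeal to the ``big boundary'' form of \cite{bchm}, one first uses bigness of $K_X+\Delta$ to write $K_X+\Delta\sim_{\mathbb Q}A+B$ with $A$ ample and $B\geq 0$, replaces $A$ by a general $A'\sim_{\mathbb Q}A$, and passes to the klt pair $(X,\Delta+\varepsilon A'+\varepsilon B)$ for $0<\varepsilon\ll 1$, whose boundary is big and whose log canonical divisor is $\mathbb Q$-linearly equivalent to $(1+\varepsilon)(K_X+\Delta)$; finite generation of the divisorial ring is insensitive to $\mathbb Q$-linear equivalence and to multiplication by a positive rational.) Next I would use that the steps of the $(K_X+\Delta)$-MMP do not change the log canonical ring --- flips are isomorphisms in codimension one, and for a divisorial contraction the contracted divisor is $(K_X+\Delta)$-negative and contributes no sections --- so $R(X,K_X+\Delta)\cong R(X',K_{X'}+\Delta')$. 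Finally, since $K_{X'}+\Delta'$ is semiample, some positive multiple $m_0(K_{X'}+\Delta')$ is basepoint free and defines a morphism $X'\to Z$ with $Z=\mathrm{Proj}\,R(X',K_{X'}+\Delta')$; then $R(X',K_{X'}+\Delta')$ is a finite module over the section ring $\bigoplus_k H^0(Z,\mathcal O_Z(k))$ of an ample sheaf on $Z$, and a standard truncation argument gives that $R(X,K_X+\Delta)$ is a finitely generated $\mathbb C$-algebra.

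The main obstacle is, unavoidably, \cite{bchm} itself: the reduction above --- the kltness check, the invariance of the log canonical ring under the MMP, and the $\mathrm{Proj}$ argument for semiample divisors --- is formal, but the existence of the good minimal model rests on the full package of \cite{bchm}, namely existence of (pl-)flips, special termination, termination of the MMP with scaling, and the base-point-free theorem, established by a delicate induction on dimension. In a survey I would therefore present the reduction as above and cite \cite{bchm} for the hard input, emphasizing --- in parallel with Remark \ref{f-rem3.19} --- that the strict inequality on the coefficients of $\Delta$ (so that the pair is klt rather than merely log canonical) together with the bigness of $K_X+\Delta$ are precisely what make \cite{bchm} directly applicable here, whereas the log canonical case and the non-big case additionally require the results of \cite{fujino-mori}, \cite{fujino-some} and the semipositivity theorems of Section \ref{f-sec3}.
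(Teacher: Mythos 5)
Your reduction is correct, and it matches the paper's treatment: the paper gives no proof of Theorem \ref{f-thm5.1} at all, simply attributing it to \cite{bchm} (with \cite{cascini-lazic} and \cite{paun2} as alternative references), which is exactly the hard input you isolate. The formal steps you supply --- kltness of a log smooth pair with coefficients in $[0,1)$, the perturbation to a big boundary, invariance of the log canonical ring under the steps of the MMP, and finite generation for semiample divisors via $\mathrm{Proj}$ and truncation --- are all standard and correctly executed.
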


Theorem \ref{f-thm5.1} was first obtained in \cite{bchm}. 
For the proof of Theorem \ref{f-thm5.1}, 
see also \cite{cascini-lazic} and \cite{paun2}. 
We know that 
the proof of Theorem \ref{f-thm5.1} was greatly influenced by 
Siu's extension argument (see \cite{siu1}) based on the 
Ohsawa--Takegoshi 
$L^2$ extension theorem 
(see \cite{hacon-mckernan}, \cite{cascini-lazic}, and \cite{paun2}) 
although \cite{hacon-mckernan} and \cite{cascini-lazic} only 
use the Kawamata--Viehweg 
vanishing theorem and do not use $L^2$ methods. 
I feel that Theorem \ref{f-thm5.1} is not Hodge theoretic. 
It is natural to see that Theorem \ref{f-thm5.1} is more closely 
related to $L^2$ methods than to Hodge theory. 

By combining Theorem \ref{f-thm5.1} with 
the result in \cite{fujino-mori}, we have: 

\begin{thm}\label{f-thm5.2} 
Let $X$ be a smooth projective variety and let $\Delta$ be an effective 
$\mathbb Q$-divisor on $X$ such that 
$\Supp \Delta$ is a simple normal crossing divisor and 
that the coefficients of $\Delta$ are less than one. 
Then the log canonical ring 
$$
R(X, K_X+\Delta)=\bigoplus _{m=0} ^\infty H^0(X, \mathcal O_X(
\lfloor m(K_X+\Delta)\rfloor))
$$ 
is a finitely generated $\mathbb C$-algebra. 
\end{thm}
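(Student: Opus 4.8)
The plan is to argue according to the size of the Iitaka dimension $\kappa=\kappa(X,K_X+\Delta)$, reducing the only substantial case to Theorem \ref{f-thm5.1} by means of the Fujino--Mori canonical bundle formula of \cite{fujino-mori}, whose semipositivity ingredient is precisely the Fujita--Zucker--Kawamata theorem (Theorem \ref{f-thm3.5}), or rather its generalization Theorem \ref{f-thm3.6}. First I would dispose of the degenerate cases. If $\kappa=-\infty$, then $H^0(X,\mathcal O_X(\lfloor m(K_X+\Delta)\rfloor))=0$ for all $m\geq 1$, so $R(X,K_X+\Delta)=\mathbb C$. If $\kappa=0$, then each graded piece has dimension at most one, the set of $m$ for which it has dimension one is a subsemigroup of $\mathbb Z_{\geq 0}$ (multiply sections) and hence finitely generated, and $R(X,K_X+\Delta)$ is the associated semigroup algebra. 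If $\kappa=\dim X$, then $K_X+\Delta$ is big and Theorem \ref{f-thm5.1} applies directly. This leaves the case $1\leq\kappa\leq\dim X-1$.

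In that range I would pass to the Iitaka fibration of $K_X+\Delta$: after replacing $(X,\Delta)$ by a suitable birational model, which does not affect the ring, I may assume there is a projective surjective morphism $f\colon X\to Y$ with connected fibers onto a smooth projective variety $Y$ with $\dim Y=\kappa$ and with $\kappa\bigl(F,(K_X+\Delta)|_F\bigr)=0$ on a general fiber $F$. The Fujino--Mori canonical bundle formula then produces, after a further birational modification of $Y$, an effective $\mathbb Q$-divisor $B_Y$ on $Y$ with $(Y,B_Y)$ kawamata log terminal, a nef $\mathbb Q$-divisor $M_Y$ on $Y$ (the moduli, or semistable, part), and a positive integer $d$ with
\[
d(K_X+\Delta)\sim_{\mathbb Q}f^{*}\bigl(d(K_Y+B_Y+M_Y)\bigr),
\]
so that the $d$-th Veronese subrings satisfy $R(X,K_X+\Delta)^{(d)}\cong R(Y,K_Y+B_Y+M_Y)^{(d)}$. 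Since a graded ring is finitely generated if and only if one of its Veronese subrings is, it is enough to treat $R(Y,K_Y+B_Y+M_Y)$; and because $\dim Y=\kappa$, the $\mathbb Q$-divisor $K_Y+B_Y+M_Y$ is big on $Y$. The point where Theorem \ref{f-thm3.5}/\ref{f-thm3.6} enters is the nefness of $M_Y$: the moduli part is governed by the variation of Hodge structure carried by the family of fibers, each of which comes with a distinguished pluricanonical form because $(K_X+\Delta)|_F$ has Iitaka dimension zero.

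It remains to remove the extra nef term, and this is where I expect the real work to lie, since $(Y,B_Y+M_Y)$ is a generalized pair rather than an honest one, so Theorem \ref{f-thm5.1} does not apply verbatim. Following the minimal model theoretic argument of \cite{fujino-mori} (see also \cite{fujino-some}), one uses the nefness of $M_Y$ together with the bigness of $K_Y+B_Y+M_Y$ --- running the relevant minimal model program, or iterating the same Iitaka-fibration construction --- to reduce the finite generation of $R(Y,K_Y+B_Y+M_Y)$ to that of an honest kawamata log terminal pair with simple normal crossing boundary whose coefficients are less than one and with big log canonical divisor, in dimension at most $\kappa$, to which Theorem \ref{f-thm5.1} applies; an induction on dimension then closes the argument, and unwinding the isomorphisms yields the finite generation of $R(X,K_X+\Delta)$. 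Thus the whole difficulty is concentrated in the moduli part $M_Y$: everything hinges on its nefness --- exactly the content of the semipositivity theorem of Section \ref{f-sec3}, which is why, as stressed in Remark \ref{f-rem3.19}, the non-(log-)general-type case of Theorem \ref{f-thm5.2} is genuinely Hodge theoretic --- and then on passing back from the generalized pair to an ordinary one so that Theorem \ref{f-thm5.1} can be invoked.
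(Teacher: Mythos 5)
Your overall route is the same as the paper's: dispose of the cases $\kappa=-\infty$, $\kappa=0$, and $K_X+\Delta$ big, pass to the Iitaka fibration, invoke the Fujino--Mori canonical bundle formula (Theorem \ref{f-thm5.4}) to replace $R(X,K_X+\Delta)$ by $R(Y',K_{Y'}+M+D)$ with $M$ nef (this is where Theorem \ref{f-thm3.5} and hence Hodge theory enters), and note that $K_{Y'}+M+D$ is big because $\dim Y'=\kappa$. Up to that point the proposal matches the paper, and your separate treatments of $\kappa=-\infty$ and $\kappa=0$ are correct (the paper folds $\kappa=0$ into the same fibration argument with $Y$ a point).

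The gap is in the final step, which you correctly identify as the remaining issue but then leave unproved, gesturing instead at ``running the relevant minimal model program, or iterating the same Iitaka-fibration construction'' plus an induction on dimension. Neither of these is an argument as written: iterating the Iitaka fibration gains nothing once the divisor is big, and an MMP for the generalized pair $(Y',D+M)$ is heavy machinery that the paper does not use and that you do not actually carry out. The paper's Proposition \ref{f-prop5.6} resolves this step with an elementary device, with no induction and no MMP: since $K_{Y'}+M+D$ is big, Kodaira's lemma gives $K_{Y'}+M+D\sim_{\mathbb Q}A+E$ with $A$ ample and $E$ effective; hence
$$
(1+\varepsilon)(K_{Y'}+M+D)\sim_{\mathbb Q}K_{Y'}+(M+\varepsilon A)+D+\varepsilon E,
$$
and for $0<\varepsilon\ll 1$ the pair $(Y',D+\varepsilon E)$ is kawamata log terminal because $\lfloor D\rfloor=0$. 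Now $M+\varepsilon A$ is \emph{ample} (not merely nef), so it can be replaced by a general effective $\mathbb Q$-divisor $B\sim_{\mathbb Q}M+\varepsilon A$ with $(Y',B+D+\varepsilon E)$ still kawamata log terminal; this is exactly the point where the nef moduli part is traded for an honest boundary, and it is only possible after the ample perturbation. A log resolution then puts you in the precise setting of Theorem \ref{f-thm5.1} in dimension $\kappa$, and comparing Veronese subrings finishes the proof. You should supply this step explicitly rather than deferring it to an unspecified minimal model argument.
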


Note that we do not 
assume that $K_X+\Delta$ is big in Theorem \ref{f-thm5.2}. 
As a corollary of Theorem \ref{f-thm5.2}, we get when $\Delta=0$: 

\begin{cor}\label{f-cor5.3}
Let $X$ be a smooth projective variety. Then 
the canonical ring 
$$
R(X)=\bigoplus _{m=0} ^\infty H^0(X, \mathcal O_X(mK_X))
$$ 
is a finitely generated $\mathbb C$-algebra. 
\end{cor}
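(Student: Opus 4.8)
The plan is to deduce Corollary \ref{f-cor5.3} immediately from Theorem \ref{f-thm5.2} by specializing the boundary to $\Delta = 0$. First I would observe that the zero divisor satisfies all the hypotheses of Theorem \ref{f-thm5.2} trivially: its support is empty, hence vacuously a simple normal crossing divisor, and the requirement that every coefficient be less than one holds vacuously as well. So Theorem \ref{f-thm5.2} is applicable with this choice of $\Delta$.

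Next I would match up the two graded rings. Since $K_X$ is a Cartier divisor on the smooth variety $X$, for every $m \geq 0$ the divisor $m(K_X+\Delta) = mK_X$ is already integral, so the rounding is harmless: $\lfloor m(K_X+\Delta)\rfloor = mK_X$. Therefore the log canonical ring appearing in Theorem \ref{f-thm5.2} specializes to
$$
R(X, K_X + 0) = \bigoplus_{m=0}^{\infty} H^0(X, \mathcal O_X(\lfloor mK_X\rfloor)) = \bigoplus_{m=0}^{\infty} H^0(X, \mathcal O_X(mK_X)) = R(X),
$$
which is exactly the canonical ring. By Theorem \ref{f-thm5.2}, this ring is a finitely generated $\mathbb C$-algebra, which is the assertion of the corollary.

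There is essentially no obstacle: all the substance is already packaged in Theorem \ref{f-thm5.2} (which rests on Theorem \ref{f-thm5.1} of \cite{bchm} together with the reduction argument of \cite{fujino-mori}), and the corollary is a purely formal specialization. The only point requiring a moment's care is the rounding convention, namely that $\lfloor mK_X\rfloor = mK_X$ because $K_X$ is Cartier; this is immediate and there is nothing further to check.
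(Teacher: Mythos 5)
Your proposal is correct and matches the paper exactly: the paper obtains Corollary \ref{f-cor5.3} by setting $\Delta=0$ in Theorem \ref{f-thm5.2}, precisely the formal specialization you describe. Your added remarks on the vacuous hypotheses and the harmlessness of the round-down are accurate and require no further justification.
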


We note that the formulation of Theorem \ref{f-thm5.1}, which 
may look artificial, is indispensable for 
the proof of Corollary \ref{f-cor5.3}. 
We will see how to reduce Theorem \ref{f-thm5.2} to Theorem \ref{f-thm5.1}. 
I think that this reduction step is 
more or less Hodge theoretic. 
I do not know how to prove Corollary \ref{f-cor5.3} 
without using this reduction argument based on the Fujino--Mori 
canonical bundle formula (see \cite{fujino-mori}).  
We can prove the following theorem from \cite{fujino-mori}. 

\begin{thm}[Fujino--Mori]\label{f-thm5.4} 
Let $X$ be a smooth projective variety, let $\Delta$ be 
an effective 
$\mathbb Q$-divisor on $X$ such that 
$\Supp \Delta$ is a simple normal crossing divisor with 
$\lfloor \Delta\rfloor =0$. 
Let $f:X\to Y$ be a surjective morphism between smooth 
projective varieties with connected fibers. 
Assume that 
$\kappa (X_{\overline{\eta}}, 
K_{X_{\overline {\eta}}}+\Delta|_{X_{\overline{\eta}}})=0$ 
where $X_{\overline {\eta}}$ is the 
geometric generic fiber of $f:X\to Y$. 
Then we can construct a commutative diagram: 
$$
\xymatrix{X\ar[d]_{f}& X'\ar[l]_p\ar[d]^{f'}\\
Y & Y'\ar[l]^q
}
$$
with the following properties. 
\begin{itemize}
\item[(i)] $p$ and $q$ are projective birational morphisms. 
\item[(ii)] $X'$ and $Y'$ are smooth projective 
varieties. 
\item[(iii)] There exists an effective $\mathbb Q$-divisor 
$\Delta'$ on $X'$ such that 
$\Supp \Delta'$ is a simple normal crossing divisor on $X'$ 
with $\lfloor \Delta'\rfloor=0$ and that 
$$
H^0(X, \mathcal O_X(\lfloor m(K_X+\Delta)\rfloor))\simeq 
H^0(X', \mathcal O_{X'}(\lfloor m(K_{X'}+\Delta')\rfloor))
$$ 
for every positive integer $m$. 
\item[(iv)] There exist a positive integer $k$, a 
nef $\mathbb Q$-divisor 
$M$ on $Y'$, and an effective $\mathbb Q$-divisor $D$ on $Y'$ such that 
$\Supp D$ is a simple normal crossing divisor with $\lfloor D\rfloor =0$ and 
that 
\begin{align*}
&H^0(X', \mathcal O_{X'}(\lfloor mk(K_{X'}+\Delta')\rfloor))
\\&\simeq H^0(Y', \mathcal O_{Y'}(\lfloor mk(K_{Y'}+M+D)\rfloor))
\end{align*}
for every positive integer $m$. 
\end{itemize}
\end{thm}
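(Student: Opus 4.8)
The plan is to deduce Theorem \ref{f-thm5.4} from the Fujino--Mori canonical bundle formula, whose only nontrivial input beyond birational geometry is the semipositivity theorem of Fujita--Zucker--Kawamata (Theorem \ref{f-thm3.5}). First I would perform the birational preparation that yields (i)--(iii). Taking a common log resolution and then blowing up the base, I may replace $f\colon X\to Y$ by a birationally equivalent $f'\colon X'\to Y'$ with $X'$, $Y'$ smooth projective and $p$, $q$ projective birational, so that $\Supp\Delta'$ is a simple normal crossing divisor with $\lfloor\Delta'\rfloor=0$, the discriminant locus of $f'$ is contained in a fixed simple normal crossing divisor $\Sigma$ on $Y'$, and $f'$ is suitably prepared over $Y'\setminus\Sigma$. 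For (iii): since $X$ is smooth and $(X,\Delta)$ is kawamata log terminal, writing $K_{X'}+\Delta'=p^*(K_X+\Delta)+E$ with $E\ge 0$ and $p$-exceptional (choose $\Delta'$ to be the birational transform of $\Delta$ plus an appropriate fractional exceptional part) gives $H^0(X,\mathcal{O}_X(\lfloor m(K_X+\Delta)\rfloor))\simeq H^0(X',\mathcal{O}_{X'}(\lfloor m(K_{X'}+\Delta')\rfloor))$ for all $m\ge 1$; note that $\kappa(X'_{\overline{\eta}},K_{X'_{\overline{\eta}}}+\Delta'|_{X'_{\overline{\eta}}})=0$ is preserved.

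Next I would run the canonical bundle formula on this prepared model. Let $b$ be the smallest positive integer for which $f'_*\mathcal{O}_{X'}(b(K_{X'/Y'}+\Delta'))$ has rank one; it exists because the relative log Kodaira dimension is $0$, so $|b(K_{X'_{\overline{\eta}}}+\Delta'|_{X'_{\overline{\eta}}})|$ consists of a single effective divisor. One then defines the discriminant (boundary) part $D$ on $Y'$ via the log canonical thresholds of $f'$ along the prime divisors of $Y'$, and the moduli part $M$ on $Y'$ (well defined up to the chosen model, after possibly blowing up $Y'$ further and modifying $X'$ correspondingly, which does not affect (i)--(iii)), so that for a positive integer $k$ divisible by $b$ and clearing all denominators one has $f'_*\mathcal{O}_{X'}(\lfloor mk(K_{X'}+\Delta')\rfloor)\simeq\mathcal{O}_{Y'}(\lfloor mk(K_{Y'}+M+D)\rfloor)$ for every $m\ge 1$. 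Since $(X',\Delta')$ is kawamata log terminal, each coefficient of $D$ is strictly less than $1$, so $\lfloor D\rfloor=0$; $D$ is automatically effective; and by further blow-ups of $Y'$ I may assume $\Supp D\cup\Sigma$ is simple normal crossing. Passing to global sections and using $H^0(X',-)=H^0(Y',f'_*-)$ then gives the isomorphism in (iv), so the only point left open is the nefness of $M$.

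The hard part is precisely this nefness of the moduli part, and it is here that Theorem \ref{f-thm3.5} enters. Up to a correction supported on $\Sigma$, $bM$ agrees with the determinant of a direct image of a relative logarithmic canonical sheaf for an auxiliary fibration obtained from $f'$ by a cyclic cover that trivializes the (log) canonical class on the fibers; over $Y'\setminus\Sigma$ this underlies a polarizable variation of pure Hodge structure, and Theorem \ref{f-thm3.5} yields nefness provided the local monodromies around $\Sigma$ are unipotent. To arrange unipotence I would take a generically finite $\tau\colon Y''\to Y'$ from a smooth projective variety achieving unipotent reduction (as in Theorem \ref{f-thm3.7}), pull everything back, obtain a nef moduli part $M''$ on $Y''$ from Theorem \ref{f-thm3.5}, verify $\tau^*M\equiv M''$ on a sufficiently high model, and conclude that $M$ is nef because a $\mathbb{Q}$-divisor on $Y'$ whose pullback under a finite surjective morphism is nef is itself nef. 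Assembling these steps yields all of (i)--(iv). I expect the genuine technical core to be the bookkeeping in the canonical bundle formula — choosing $k$ so that every floor function matches up, and controlling the moduli part under changes of birational model and under the base change $\tau$ — while everything conceptual beyond birational geometry is packaged in Theorem \ref{f-thm3.5}.
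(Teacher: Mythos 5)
Your outline is correct and coincides with the argument the paper points to: the paper gives no proof of Theorem \ref{f-thm5.4} itself but defers to \cite[Theorem 4.5]{fujino-mori}, and Remark \ref{f-rem5.5} confirms that the only non-birational input is the nefness of the moduli part $M$, which comes from Theorem \ref{f-thm3.5} exactly as you describe (cyclic cover trivializing the log canonical class on the fibers, unipotent reduction, descent of nefness under finite surjective pullback). The technical points you flag --- the base-change compatibility $\tau^*M\equiv M''$ of the moduli part and the floor-function bookkeeping in (iv) --- are indeed where the real work lies in \cite{fujino-mori}.
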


We do not give a proof of Theorem \ref{f-thm5.4} here. 
For the details, see \cite[Theorem 4.5]{fujino-mori}. 

\begin{rem}\label{f-rem5.5}
Theorem \ref{f-thm5.4} is an application of 
the Fujino--Mori canonical bundle formula discussed in \cite{fujino-mori}. 
The $\mathbb Q$-divisor $M$ is called the semistable part in 
\cite{fujino-mori} and now is usually called the moduli part in 
the literature. 
Note that the nefness of $M$ comes from Theorem \ref{f-thm3.5}. 
Therefore, Theorem \ref{f-thm5.4} is more or less 
Hodge theoretic. 
\end{rem}

Using Kodaira's lemma and Hironaka's resolution theorem, 
we can prove: 

\begin{prop}\label{f-prop5.6}
If $K_{Y'}+M+D$ is big in Theorem \ref{f-thm5.4}, then 
there exist a birational morphism 
$r:Z\to Y'$ from a smooth 
projective variety $Z$, an effective $\mathbb Q$-divisor 
$\Delta_Z$ on $Z$, and positive integers $a$ and $b$ such that 
$\Supp \Delta_Z$ is a simple normal crossing divisor with 
$\lfloor \Delta_Z\rfloor=0$ and 
that 
\begin{align*}
&H^0(Y', \mathcal O_{Y'}(\lfloor ma(K_{Y'}+M+D)\rfloor))
\\ & \simeq H^0(Z, \mathcal O_Z(\lfloor mb(K_Z+\Delta_Z)\rfloor))
\end{align*} 
for every positive integer $m$. 
\end{prop}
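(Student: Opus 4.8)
The plan is to combine Kodaira's lemma with Hironaka's resolution theorem in a fairly standard way. Since $K_{Y'}+M+D$ is assumed big, Kodaira's lemma gives us an effective $\mathbb Q$-divisor $E$ on $Y'$ and a positive integer $a_0$ such that $a_0(K_{Y'}+M+D)-E$ is $\mathbb Q$-linearly equivalent to an ample $\mathbb Q$-divisor $A$, or more simply such that $a_0(K_{Y'}+M+D)\sim_{\mathbb Q} A+E$ with $A$ ample. The idea is that one wants to absorb $M+D$ (which is not of the form $\Delta_Z$) into an honest boundary $\Delta_Z$ after passing to a suitable resolution; the ample part together with the effective parts $E$ and $D$ will be what we turn into the simple normal crossing boundary.

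First I would replace $A$ by a general effective member of $|lA|$ for suitably divisible $l$, so that $A$ becomes an effective $\mathbb Q$-divisor; then $a(K_{Y'}+M+D)\sim_{\mathbb Q} A+E+ (\text{positive multiple of } D)$ for an appropriate $a$, where now all terms on the right are effective. Next I would take $r:Z\to Y'$ a log resolution (Hironaka) of the pair $(Y', A+E+D)$, so that the union of the strict transforms and the $r$-exceptional divisors is simple normal crossing. On $Z$ one writes $K_Z+\{\text{discrepancy correction}\}=r^*K_{Y'}+\{\text{exceptional}\}$ and feeds in $r^*(A+E+D)$; after scaling all coefficients by a small enough rational number and adding a small multiple of an effective exceptional divisor to control the negative parts, one obtains an effective $\mathbb Q$-divisor $\Delta_Z$ with $\lfloor\Delta_Z\rfloor=0$, simple normal crossing support, and positive integers $a,b$ with $ar^*(K_{Y'}+M+D)\sim_{\mathbb Q} b(K_Z+\Delta_Z)$ up to an effective exceptional $\mathbb Q$-divisor whose round-down is zero. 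Finally, the equality of sections
$$
H^0(Y', \mathcal O_{Y'}(\lfloor ma(K_{Y'}+M+D)\rfloor))\simeq H^0(Z, \mathcal O_Z(\lfloor mb(K_Z+\Delta_Z)\rfloor))
$$
follows because $r_*\mathcal O_Z=\mathcal O_{Y'}$ and adding an effective $r$-exceptional divisor with round-down zero does not change the space of global sections (the projection formula plus the standard fact that $H^0$ is unchanged under such modifications).

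The main obstacle is the bookkeeping with coefficients: one has to be careful that when transporting $M+D$ to $Z$ and replacing it by $\Delta_Z$, all coefficients of $\Delta_Z$ stay strictly below $1$ (so $\lfloor\Delta_Z\rfloor=0$) while simultaneously $\Delta_Z$ remains effective. The freedom here is that we may first scale by a small rational number $\varepsilon$ — replacing $a$ by $a/\varepsilon$ and $b$ correspondingly — which shrinks the coefficients coming from $A+E+D$ as much as needed; the only coefficients we cannot shrink that way are the exceptional ones forced by the resolution, but those can be taken with arbitrarily small positive coefficient since we are free to add $\varepsilon' F$ for $F$ effective exceptional. So the real content is just checking that these two scalings can be performed compatibly, which is routine once set up carefully. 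I would also note that Kodaira's lemma is exactly the tool that converts "big" into "ample $+$ effective", which is the crucial input; without bigness of $K_{Y'}+M+D$ there is no ample part to play with and the argument collapses.
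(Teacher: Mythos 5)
Your overall strategy (Kodaira's lemma to turn bigness into ample plus effective, then Hironaka to clean up the boundary) is the same as the paper's, but as written there is a genuine gap: you never deal with the moduli part $M$. In Theorem \ref{f-thm5.4}, $M$ is only a \emph{nef} $\mathbb Q$-divisor; it is not assumed effective, and in general it is not. Your displayed decomposition $a(K_{Y'}+M+D)\sim_{\mathbb Q}A+E+(\text{positive multiple of }D)$ either has no $K_{Y'}$ left on the right-hand side (in which case there is nothing to pull back to $K_Z$, and you cannot reach the form $b(K_Z+\Delta_Z)$), or, if you keep one copy of $K_{Y'}+M+D$ and add $A+E$ from Kodaira's lemma, the right-hand side is $K_{Y'}+M+(D+A+E)$ and the claim that ``all terms on the right are effective'' fails precisely at $M$. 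Everything you then feed into the log resolution and the coefficient-scaling discussion presupposes an effective boundary, so the argument does not go through for $M$ as it stands.

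The paper's proof handles exactly this point, and it is the one step that is not routine bookkeeping: write $(1+\varepsilon)(K_{Y'}+M+D)\sim_{\mathbb Q}K_{Y'}+(M+\varepsilon A)+D+\varepsilon E$, observe that $M+\varepsilon A$ is \emph{ample} because $M$ is nef and $A$ is ample, and then choose a general effective $\mathbb Q$-divisor $B\sim_{\mathbb Q}M+\varepsilon A$ so that $(Y',\Delta_{Y'})$ with $\Delta_{Y'}=B+D+\varepsilon E$ is kawamata log terminal. The nefness of $M$ is what makes this replacement possible, and klt-ness of $(Y',\Delta_{Y'})$ is what guarantees, after a log resolution $r:Z\to Y'$, that $\Delta_Z$ can be taken effective with simple normal crossing support and $\lfloor\Delta_Z\rfloor=0$ --- you do not need your ad hoc scaling of exceptional coefficients once you know the pair downstairs is klt. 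I would also note that your final step (invariance of $H^0$ under adding an effective exceptional divisor) is fine, but it should be stated as $K_Z+\Delta_Z=r^*(K_{Y'}+\Delta_{Y'})+F$ with $F$ effective and $r$-exceptional, which is where $r_*\mathcal O_Z(\lfloor mb(K_Z+\Delta_Z)\rfloor)\simeq\mathcal O_{Y'}(\lfloor mb(K_{Y'}+\Delta_{Y'})\rfloor)$ comes from.
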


\begin{proof}
This is an easy consequence of Kodaira's lemma on big $\mathbb Q$-divisors 
and Hironaka's resolution of singularities. 
We note that we can choose $\Delta_Z$ with $\lfloor \Delta_Z\rfloor =0$ since 
$M$ is nef and $\lfloor D\rfloor =0$. 
More precisely, by Kodaira's lemma, we have 
$K_{Y'}+M+D\sim _{\mathbb Q} A+E$, where $A$ is an ample 
$\mathbb Q$-divisor and $E$ is an effective $\mathbb Q$-divisor. 
Thus we have 
$(1+\varepsilon)(K_{Y'}+M+D)\sim _{\mathbb Q}K_{Y'}+(M+\varepsilon A)+D+\varepsilon 
E$ for every positive rational number $\varepsilon$. 
If $\varepsilon$ is sufficiently small, 
then $(Y', D+\varepsilon E)$ is kawamata log terminal. 
Since $M+\varepsilon A$ is ample, 
we can take an effective 
$\mathbb Q$-divisor $B$ such that 
$B\sim _{\mathbb Q} M+\varepsilon A$ and that $(Y', \Delta_{Y'})$ is still 
kawamata log terminal, where $\Delta_{Y'}=B+D+\varepsilon E$. 
Therefore, we can find positive integers $a$ and $b$ such that 
$a(K_{Y'}+M+D)\sim b(K_{Y'}+\Delta_{Y'})$. 
By Hironaka's resolution theorem, 
we can take $r:Z\to Y'$ and $\Delta_{Z}$ such that 
$\Supp \Delta_Z$ is a simple normal crossing divisor 
with $\lfloor \Delta_Z\rfloor =0$ and that 
$H^0(Z, \mathcal O_Z(\lfloor m(K_Z+\Delta_Z)\rfloor))\simeq 
H^0(Y', \mathcal O_{Y'}(\lfloor m(K_{Y'}+\Delta_{Y'})\rfloor))$ for 
every positive integer $m$. Thus we have the desired properties. 
\end{proof}

Let us see how to prove Theorem \ref{f-thm5.2} by using 
Theorem \ref{f-thm5.1}, Theorem \ref{f-thm5.4}, and 
Proposition \ref{f-prop5.6}. 

\begin{proof}[Proof of 
Theorem \ref{f-thm5.2}] Let $X$ and $\Delta$ be as in Theorem \ref{f-thm5.2}. 
Assume that $\kappa (X, K_X+\Delta)\geq 0$ and that 
$K_X+\Delta$ is not big. 
We consider the Iitaka fibration $\Phi _{|m(K_X+\Delta)|}: 
X\dashrightarrow  Y$ for some large and divisible positive integer $m$. 
By taking a suitable birational modification, 
we may assume that $f:X\to Y$ is a surjective morphism 
between smooth projective varieties with connected 
fibers. 
Then we apply Theorem \ref{f-thm5.4} and 
Proposition \ref{f-prop5.6} to $f:X\to Y$. 
Thus, we see that $R(X, K_X+\Delta)$ is a finitely generated 
$\mathbb C$-algebra if and only if 
$R(Z, K_Z+\Delta_Z)$ is a finitely generated $\mathbb C$-algebra. 
By Theorem \ref{f-thm5.1}, we know that 
$R(Z, K_Z+\Delta_Z)$ is a finitely generated $\mathbb C$-algebra. 
Therefore, we obtain Theorem \ref{f-thm5.2}.  
\end{proof}

As I explained in Remark \ref{f-rem5.5}, 
Theorem \ref{f-thm5.4} is more or less Hodge theoretic. 
So, we pose: 

\begin{problem}\label{f-prob5.7}
Prove Corollary \ref{f-cor5.3} without using Hodge theory. 
\end{problem}

\begin{rem}\label{f-rem5.8}
Theorem \ref{f-thm3.5} holds true under the assumption that 
$X$ is only a compact K\"ahler manifold. 
This is because we can use the theory of variation of Hodge 
structure even for compact K\"ahler manifolds.  
Therefore, Theorem \ref{f-thm5.4} holds true 
under the assumption that 
$X$ is a compact complex manifold in Fujiki's class $\mathcal C$. 
As a consequence, we see that Theorem \ref{f-thm5.2} holds 
for compact complex manifolds in Fujiki's class $\mathcal C$. 
For the details, see \cite[Section 5]{fujino-some}. 
As a special case, we have the corollary below. 
\end{rem}

\begin{cor}\label{f-cor5.9}
Let $X$ be a compact K\"ahler manifold. 
Then the canonical ring 
$$
R(X)=\bigoplus _{m=0}^\infty H^0(X, \omega^{\otimes m}_X)
$$ 
is a finitely generated $\mathbb C$-algebra. 
\end{cor}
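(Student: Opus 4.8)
The plan is to reduce Corollary~\ref{f-cor5.9} to the projective case treated in Theorem~\ref{f-thm5.1} and Theorem~\ref{f-thm5.2}, checking along the way that every ingredient remains valid for compact K\"ahler manifolds; the statement is then the $\Delta=0$ instance of the class-$\mathcal C$ version of Theorem~\ref{f-thm5.2} alluded to in Remark~\ref{f-rem5.8}. Since a compact K\"ahler manifold lies in Fujiki's class $\mathcal C$, this is the natural route.

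First I would dispense with the degenerate cases. If $\kappa(X)=-\infty$ then $R(X)=\mathbb C$, and if $\kappa(X)=0$ then every nonzero graded piece of $R(X)$ is one-dimensional and the set of $m$ with $H^0(X,\omega_X^{\otimes m})\neq 0$ is a finitely generated numerical semigroup, so $R(X)$ is a finitely generated $\mathbb C$-algebra. If $X$ is of general type, i.e.\ $\kappa(X)=\dim X$, then a pluricanonical map is bimeromorphic onto its image, so $X$ is Moishezon; being also K\"ahler, $X$ is projective, and Corollary~\ref{f-cor5.3} applies verbatim.

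The remaining case is $0<\kappa(X)<\dim X$. Here I would take the Iitaka fibration $\Phi_{|mK_X|}\colon X\dashrightarrow Y$ for a large and divisible $m$ and, after a bimeromorphic modification, assume $f\colon X\to Y$ is a surjective morphism of compact K\"ahler manifolds with connected fibers and $\kappa(X_{\overline\eta})=0$ on the geometric generic fiber. The crucial observation is that Theorem~\ref{f-thm5.4} is available in this setting, because its only Hodge-theoretic input --- the nefness of the moduli part --- comes from Theorem~\ref{f-thm3.5}, which holds for compact K\"ahler $X$ since the theory of variation of Hodge structure is available for compact K\"ahler manifolds (Remark~\ref{f-rem5.8}). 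Applying Theorem~\ref{f-thm5.4} and then Proposition~\ref{f-prop5.6} reduces the finite generation of $R(X)=R(X,K_X)$ to that of $R(Z,K_Z+\Delta_Z)$, where $\Supp\Delta_Z$ is simple normal crossing, $\lfloor\Delta_Z\rfloor=0$, and $K_Z+\Delta_Z$ is big. Since $K_Z+\Delta_Z$ is big, $Z$ is Moishezon, hence projective, so Theorem~\ref{f-thm5.1} gives the finite generation of $R(Z,K_Z+\Delta_Z)$, and therefore of $R(X)$.

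The main obstacle is the bookkeeping required to stay inside the K\"ahler category: one must verify that the birational modifications of $X$ and $Y$ can be chosen K\"ahler (or at least in class $\mathcal C$), that the Fujino--Mori canonical bundle formula and Kodaira's lemma go through there, and --- the decisive algebraicity point --- that the terminal object $Z$ carrying a big log canonical divisor is genuinely projective, so that the finite generation theorem of Birkar--Cascini--Hacon--M\textsuperscript{c}Kernan can be invoked. These verifications are carried out in \cite[Section~5]{fujino-some}.
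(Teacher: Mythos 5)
Your proposal is correct and follows essentially the same route as the paper, which deduces Corollary \ref{f-cor5.9} from the class-$\mathcal C$ version of Theorem \ref{f-thm5.2} (Remark \ref{f-rem5.8}), i.e.\ the K\"ahler validity of Theorem \ref{f-thm3.5}, hence of Theorem \ref{f-thm5.4}, followed by Proposition \ref{f-prop5.6} and Theorem \ref{f-thm5.1}, with details deferred to \cite[Section 5]{fujino-some}. One small caveat: ``Moishezon, hence projective'' is not valid for $Z$ in general (a smooth Moishezon manifold need not be projective); the correct reason $Z$ is projective is that it is a modification of $Y'$, itself a modification of the base $Y$ of the Iitaka fibration, which is projective by construction.
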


\begin{rem}\label{f-rem5.10}
There exists a compact complex non-K\"ahler manifold whose 
canonical ring is not a finitely generated $\mathbb C$-algebra (see 
\cite[Example 6.4]{fujino-some}, which is 
essentially due to Wilson \cite{wilson}). 
This means that Corollary \ref{f-cor5.9} does not hold for 
compact complex non-K\"ahler manifolds. 
The reader can find a smooth morphism 
$f:X\to Y$ from a compact complex non-K\"ahler manifold $X$ onto 
$Y=\mathbb P^1$ with 
connected fibers such that 
$f_*\omega_{X/Y}\simeq \mathcal O_{\mathbb P^1}(-2)$ 
(see \cite[Example 6.1]{fujino-some}). 
This example is due to Atiyah. 
Therefore, Theorem \ref{f-thm3.5} does not hold for 
compact non-K\"ahler manifolds. 
Of course, by this example, we see that 
Theorem \ref{f-thm5.4} does not hold true without assuming that 
$X$ is a compact complex manifold in Fujiki's class $\mathcal C$. 
For the details, see \cite{fujino-some}. 
\end{rem}

As a generalization of Theorem \ref{f-thm5.2}, we have 
Conjecture \ref{f-conj5.11}, where we are allowing the coefficients of 
$\Delta$ to equal one.  

\begin{conj}\label{f-conj5.11} 
Let $X$ be a smooth projective variety 
and let $\Delta$ be an effective $\mathbb Q$-divisor 
on $X$ such that $\Supp \Delta$ is a simple 
normal crossing divisor and that 
the coefficients of $\Delta$ are less than or equal to 
one. 
Then the log canonical 
ring 
$$
R(X, K_X+\Delta)=\bigoplus _{m=0}^\infty 
H^0(X, \mathcal O_X(\lfloor 
m(K_X+\Delta)\rfloor))
$$ 
is a finitely generated $\mathbb C$-algebra. 
\end{conj}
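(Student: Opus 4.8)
The plan is to run the same reduction that deduces Theorem \ref{f-thm5.2} from Theorem \ref{f-thm5.1}, but now carrying the coefficient-one components of $\Delta$ through the entire argument rather than perturbing them away. First I would dispose of the trivial case: if $\kappa(X,K_X+\Delta)=-\infty$ then $R(X,K_X+\Delta)=\mathbb C$, so I may assume $\kappa(X,K_X+\Delta)\ge 0$. Note that a direct perturbation of $\lfloor\Delta\rfloor$ is useless here, since replacing $\Delta$ by a boundary with all coefficients $<1$ changes the pluricanonical sections; the argument must genuinely be carried out in the log canonical category.

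The base case is when $K_X+\Delta$ is big. Since $\Supp\Delta$ is simple normal crossing, $(X,\Delta)$ is log canonical (in fact log smooth), and I would appeal to the minimal model program for log canonical pairs with big boundary — existence of log canonical flips and termination with scaling (Birkar, Hacon--Xu) together with Fujino's fundamental theorems for log canonical pairs, in particular the base-point-free theorem, which itself rests on the injectivity and vanishing Theorems \ref{f-thm2.12} and \ref{f-thm2.13}. One runs a $(K_X+\Delta)$-MMP with scaling; it terminates because $K_X+\Delta$ is big, and on the output $K_X+\Delta$ is nef and big, hence semiample by the base-point-free theorem, so $R(X,K_X+\Delta)$ is a finitely generated $\mathbb C$-algebra. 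Crucially, no abundance input is needed in this case, exactly as in \cite{bchm}.

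For $0\le\kappa(X,K_X+\Delta)<\dim X$, I would pass to the Iitaka fibration $f:X\dashrightarrow Y$ of $|m(K_X+\Delta)|$, resolve so that $f$ is a surjective morphism of smooth projective varieties with connected fibers, and then apply a version of the Fujino--Mori canonical bundle formula (Theorem \ref{f-thm5.4}) adapted to $\lfloor\Delta\rfloor\ne 0$: on a suitable birational model $Y'$ of the base one gets $K_{Y'}+M+D$, where $D$ is the effective discriminant part — now allowed to have coefficients equal to one — $M$ is the moduli (semistable) part, and $H^0$ of the pluricanonical systems upstairs is identified with that of $\lfloor mk(K_{Y'}+M+D)\rfloor$. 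The nefness of $M$ still comes from Hodge theory, namely from the semipositivity theorems of Section \ref{f-sec3}, and here one genuinely needs Theorem \ref{f-thm3.6} rather than Theorem \ref{f-thm3.5}, since the geometric generic fibre is only log canonical. Because $\kappa(Y',K_{Y'}+M+D)=\dim Y'$, the divisor $K_{Y'}+M+D$ is big, so after a Kodaira-lemma perturbation as in Proposition \ref{f-prop5.6} — performed after first peeling off $\lfloor D\rfloor$ so that the resulting boundary still has coefficients at most one — finite generation of $R(X,K_X+\Delta)$ is reduced to the big log canonical case already settled, in strictly smaller dimension, which completes the proof.

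The main obstacle is the canonical bundle formula in the log canonical setting. Because $\lfloor\Delta\rfloor\ne 0$, the generic fibre pair $(X_{\overline\eta},\Delta|_{X_{\overline\eta}})$ is log canonical rather than kawamata log terminal, and two points become delicate: controlling the discriminant divisor $D$ (effectivity, coefficients in $[0,1]$, and survival of the pluricanonical-section isomorphism in the presence of coefficient-one components), and proving the nefness of the moduli part $M$, which requires the semipositivity of direct images of relative log canonical bundles for families with log canonical fibres — precisely the gradedly polarizable admissible variation-of-mixed-Hodge-structure input behind Theorem \ref{f-thm3.6}. A secondary difficulty is that the perturbation step, routine in the kawamata log terminal case, must be arranged so as not to violate the coefficients-$\le 1$ hypothesis, which is the reason for isolating $\lfloor D\rfloor$ before perturbing.
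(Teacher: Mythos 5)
The statement you are addressing is Conjecture \ref{f-conj5.11}: the paper records it as an \emph{open} conjecture, remarks immediately afterwards that the nonvanishing conjecture (Conjecture \ref{f-conj5.13}) ``seems to be unavoidable'' for it, and refers to \cite{fujino-gongyo}, where only implications among conjectures are established. The second half of your outline --- pass to the Iitaka fibration, apply an lc-trivial canonical bundle formula with nef moduli part, and reduce to the log general type case in lower dimension --- is essentially the reduction that \cite{fujino-gongyo} actually carries out, and the canonical-bundle-formula input you single out as the ``main obstacle'' is in fact available in the log canonical setting (Ambro, Fujino--Gongyo), so that is not where the difficulty lies. The fatal gap is your ``base case''. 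The claim that for a log smooth lc pair with $K_X+\Delta$ big one runs a $(K_X+\Delta)$-MMP with scaling that ``terminates because $K_X+\Delta$ is big,'' with ``no abundance input needed \ldots exactly as in \cite{bchm},'' is not a theorem. Every step of the \cite{bchm} argument for pairs of log general type --- writing $K_X+\Delta\sim_{\mathbb Q}A+E$ by Kodaira's lemma, absorbing $\varepsilon E$ and a general ample divisor into the boundary, and invoking finiteness of log terminal models to get termination with scaling --- requires the pair to be kawamata log terminal, because the perturbing divisors must avoid all log canonical centres. When $\lfloor\Delta\rfloor\neq 0$, the strata $W$ of $\lfloor\Delta\rfloor$ are log canonical centres that may lie in the stable base locus of $K_X+\Delta$; then $(K_X+\Delta)|_W$ need not be big and $E$ cannot be chosen to miss them. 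The available lc machinery (existence of lc flips by Birkar and Hacon--Xu, special termination) pushes the problem onto exactly these strata, where the adjoint pairs $(W,\Delta_W)$ are no longer of log general type, so that termination and semiampleness there require good minimal models, abundance, and in particular nonvanishing in lower dimension --- precisely the input you declare unnecessary. Indeed, \cite{fujino-gongyo} reduces the whole of Conjecture \ref{f-conj5.11} \emph{to} the log general type (even purely log terminal) case; your base case is the open heart of the problem, not a corollary of Theorem \ref{f-thm5.1}.

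A secondary but real problem occurs again after the canonical bundle formula: on the base you face $K_{Y'}+M+D$ with $\lfloor D\rfloor$ possibly nonzero, and the Proposition \ref{f-prop5.6}--style perturbation needs the effective divisor supplied by Kodaira's lemma to avoid the strata of $\lfloor D\rfloor$, which again fails when those strata meet the augmented base locus. ``Peeling off $\lfloor D\rfloor$ before perturbing'' is not a legitimate move: it changes the graded ring whose finite generation you are trying to establish. So both halves of your argument ultimately rest on the unproved log general type lc case, and a correct write-up along these lines would yield a conditional statement (finite generation granting the good minimal model or nonvanishing conjectures in lower dimension), which is the actual content of \cite{fujino-gongyo}, not an unconditional proof of Conjecture \ref{f-conj5.11}.
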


\begin{rem}\label{f-rem5.12} 
Of course, we think that Conjecture \ref{f-conj5.11} also holds for compact 
complex manifolds in Fujiki's class $\mathcal C$. 
However, I could not reduce Conjecture \ref{f-conj5.11} for 
compact complex manifolds in Fujiki's class $\mathcal C$ to 
Conjecture \ref{f-conj5.11} for projective varieties. 
Therefore, Conjecture \ref{f-conj5.11} for compact complex manifolds 
in Fujiki's class $\mathcal C$ may be much harder than for 
projective varieties. For the details, see \cite{fujino-some}. 
\end{rem}

I have already discussed Conjecture \ref{f-conj5.11} in detail 
in a joint paper with Yoshinori Gongyo (see \cite{fujino-gongyo}). 
In \cite{fujino-gongyo}, we clarified the relationship among 
various conjectures in the minimal model program related 
to Conjecture \ref{f-conj5.11}. So we do not repeat 
the discussions about Conjecture \ref{f-conj5.11} here. 
We strongly recommend the interested reader to see \cite{fujino-gongyo}. 

In order to prove Conjecture \ref{f-conj5.11}, the following 
famous conjecture seems to be unavoidable. 
I think that it is a very difficult open problem for higher-dimensional 
algebraic varieties. 

\begin{conj}[Nonvanishing conjecture]\label{f-conj5.13}
Let $X$ be a smooth projective variety such that 
the canonical divisor $K_X$ is pseudoeffective. 
Then we have $H^0(X, \mathcal O_X(mK_X))\ne 0$ for 
some positive integer $m$, equivalently, 
$\kappa (X)\geq 0$. 
\end{conj}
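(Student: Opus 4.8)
The plan is to approach Conjecture~\ref{f-conj5.13} through the minimal model program, using the Hodge-theoretic tools of Sections~\ref{f-sec3} and~\ref{f-sec5} as the engine of an induction on dimension. Since $h^0(X,\mathcal O_X(mK_X))$ is a birational invariant and the assertion ``$H^0(X,\mathcal O_X(mK_X))\neq 0$ for some $m>0$'' is by definition equivalent to $\kappa(X)\geq 0$, one first wants to replace $X$ by a minimal model, so that $X$ acquires only $\mathbb Q$-factorial terminal singularities and $K_X$ becomes nef. When $K_X$ is big this is supplied by \cite{bchm}; when $K_X$ is merely pseudoeffective, the existence of a minimal model is itself only conjectural and is tightly intertwined with Conjecture~\ref{f-conj5.13} (see \cite{fujino-gongyo} for the precise web of implications), so this reduction step has to be granted. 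From here on we may assume that $X$ is minimal with $K_X$ nef.

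Next I would stratify according to the numerical dimension $\nu=\nu(K_X)$. If $\nu=\dim X$, then $K_X$ is big and $\kappa(X)=\dim X$, and there is nothing to prove. If $\nu=0$, then $K_X\equiv 0$, and one invokes the known fact that a numerically trivial canonical divisor on a variety with canonical singularities is $\mathbb Q$-linearly trivial, whence $\kappa(X)=0$. The genuine case is $0<\nu<\dim X$, where $K_X$ is neither big nor numerically trivial. Here the strategy is to produce a surjective morphism $f\colon X\to Z$ with connected fibers and $0<\dim Z<\dim X$ --- for instance the Albanese morphism when $q(X)>0$, or a relative Iitaka fibration built from lower-dimensional strata whose Kodaira dimension is already controlled by induction --- and to feed it into the Fujino--Mori canonical bundle formula, Theorem~\ref{f-thm5.4}: after birational adjustment this reduces the pluricanonical ring of $X$ to that of $K_Z+M+D$ on the lower-dimensional base $Z$, where the moduli part $M$ is \emph{nef} by the semipositivity theorems, Theorem~\ref{f-thm3.5} and Theorem~\ref{f-thm3.6}. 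Combining this with Conjecture~\ref{f-conj5.13} in dimension $\dim Z$ and a positivity analysis of $M$ and $D$ would then give $\kappa(X)\geq 0$.

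The hard part --- and the reason the conjecture is still open --- is exactly the situation in which no such fibration is available: $X$ minimal, $q(X)=0$, not uniruled, and $0<\nu(K_X)<\dim X$, so that there is a priori no morphism onto a positive-dimensional base. Manufacturing one would essentially require knowing that $K_X$ is semiample, i.e.\ abundance, which is circular. In the cases that are known --- $\dim X\le 3$ (Miyaoka, Kawamata), or $X$ with sufficiently large fundamental group, or various uniruledness-related settings --- one sidesteps this obstruction with extra geometry: Miyaoka's generic semipositivity of $\Omega^1_X$ together with Riemann--Roch and Bogomolov-type inequalities, or nonabelian Hodge theory (Corlette--Simpson, Viehweg--Zuo) when $X$ carries a nontrivial local system, or, analytically, Siu's metric constructions. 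A uniform argument would presumably have to couple the semipositivity theorems of Section~\ref{f-sec3} to some genuinely new mechanism forcing $\kappa(X)$ up to $\nu(K_X)$; I expect the real difficulty to lie precisely in finding such a mechanism, and at present I know of none.
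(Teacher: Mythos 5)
This statement is a conjecture: the paper offers no proof of it, and indeed states explicitly that it is ``a very difficult open problem'' and that the author does ``not know how to attack it.'' Your proposal, to your credit, does not pretend otherwise --- it is a strategy sketch that ends by conceding that the decisive case is out of reach. So the verdict is that there is a genuine gap, one you have already named yourself, and it is worth being precise about where it sits. First, the very first reduction is not free: passing to a minimal model with $K_X$ nef requires the existence of minimal models for pseudoeffective $K_X$, which is itself conjectural and, as Remark~\ref{f-rem5.18} records via Hashizume's result, is essentially \emph{equivalent} to (implied by) Conjecture~\ref{f-conj5.13} for log canonical pairs; so granting it is close to assuming what you want to prove. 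Second, even granting a minimal model, the core case $0<\nu(K_X)<\dim X$ with $q(X)=0$ and no a priori fibration is exactly where the Fujino--Mori machinery of Theorem~\ref{f-thm5.4} has nothing to bite on: that theorem presupposes a surjective morphism $f\colon X\to Y$ with $\kappa$ of the geometric generic fiber equal to zero, and manufacturing such a fibration from numerical data alone is the content of abundance, which is circular. The semipositivity theorems of Section~\ref{f-sec3} transport nonvanishing down a fibration once one exists; they do not produce the fibration.

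Your intermediate claims are individually sound --- the $\nu=\dim X$ case is trivial, the $\nu=0$ case follows from the known fact that a numerically trivial canonical divisor on a variety with canonical singularities is torsion, and the reduction along a fibration via Theorem~\ref{f-thm5.4} together with the nefness of the moduli part (Theorem~\ref{f-thm3.5}) is exactly how Theorem~\ref{f-thm5.2} is deduced from Theorem~\ref{f-thm5.1} in Section~\ref{f-sec5}. But as a proof of the conjecture the proposal does not close, and no argument along these lines currently can; the honest conclusion is that the statement remains open, which is also the paper's position.
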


For the reader's convenience, let us recall the definition of 
pseudoeffective divisors. 

\begin{defn}\label{f-def5.14}
Let $X$ be a smooth projective variety and let $D$ be a 
Cartier divisor on $X$. 
Then $D$ is pseudoeffective if $D+A$ is big for every ample 
$\mathbb Q$-divisor $A$ on $X$.  
\end{defn}

The characterization of pseudoeffective divisors via singular 
hermitian metrics may be helpful. 

\begin{rem}\label{f-rem5.15}
Let $X$ be a smooth projective variety and let $D$ be a Cartier divisor on $X$. 
Then $D$ is pseudoeffective if and only if 
$\mathcal O_X(D)$ has a singular hermitian metric 
$h$ with 
$\sqrt{-1}\Theta_h\geq 0$ in the sense of currents.  
\end{rem}

The characterization of uniruled varieties 
due to Boucksom--Demailly--P\u aun--Peternell (see \cite{bdpp}) 
is important and helps us understand Conjecture \ref{f-conj5.13}. 

\begin{thm}\label{f-thm5.16}
Let $X$ be a smooth projective variety. 
Then $X$ is uniruled if and only if $K_X$ is not pseudoeffective. 
\end{thm}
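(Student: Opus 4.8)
The plan is to prove the two implications separately; the direction ``$X$ uniruled $\Rightarrow K_X$ not pseudoeffective'' is elementary Mori theory, whereas the converse is the main content of Boucksom--Demailly--P\u aun--Peternell, and it will be reduced to their cone duality theorem together with the Miyaoka--Mori bend-and-break argument.

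For the easy direction, suppose $X$ is uniruled. First I would invoke the standard fact (Miyaoka--Mori, Koll\'ar) that $X$ then carries a covering family $\{C_t\}_{t\in T}$ of \emph{free} rational curves, and that a free rational curve satisfies $-K_X\cdot C_t\ge 2$ because $df\colon T_{\mathbb P^1}\hookrightarrow f^*T_X$ forces a summand of degree $\ge 2$ in $f^*T_X=\bigoplus\mathcal O(a_i)$ with $\sum a_i=-K_X\cdot C_t$. Now if $K_X$ were pseudoeffective, then for every ample $\mathbb Q$-divisor $A$ and every rational $\varepsilon>0$ the class $K_X+\varepsilon A$ is big, hence some positive multiple is linearly equivalent to an effective divisor; a general member $C_t$ of a covering family is not contained in the support of that effective divisor, so $(K_X+\varepsilon A)\cdot C_t\ge 0$, and letting $\varepsilon\to 0$ yields $K_X\cdot C_t\ge 0$, contradicting $-K_X\cdot C_t\ge 2$.

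For the hard direction, suppose $K_X$ is not pseudoeffective. The key step is to cite the duality theorem of Boucksom--Demailly--P\u aun--Peternell: in $N^1(X)_{\mathbb R}$ the pseudoeffective cone $\overline{\operatorname{Eff}}(X)$ is dual to the closed cone $\overline{\operatorname{ME}}(X)\subset N_1(X)_{\mathbb R}$ generated by the strongly movable classes $\mu_*(\widetilde H_1\cap\cdots\cap\widetilde H_{n-1})$, where $\mu\colon X'\to X$ is birational and the $\widetilde H_i$ are very ample on $X'$. Since $K_X\notin\overline{\operatorname{Eff}}(X)$, there is a class $\gamma\in\overline{\operatorname{ME}}(X)$ with $K_X\cdot\gamma<0$, and by the openness of this condition I may take $\gamma$ to be an honest strongly movable class. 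Then the curves $C_t=\mu(\widetilde H_1\cap\cdots\cap\widetilde H_{n-1})$ form, by Bertini, an irreducible covering family of $X$ with $K_X\cdot C_t=\mu^*K_X\cdot\widetilde H_1\cdots\widetilde H_{n-1}<0$. Finally I would apply the Miyaoka--Mori theorem: a smooth projective variety through whose general point there passes a curve of negative canonical degree is uniruled (the proof degenerates the $C_t$ into rational curves via bend-and-break). Hence $X$ is uniruled.

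The main obstacle is of course the cone duality $\overline{\operatorname{Eff}}(X)=\overline{\operatorname{ME}}(X)^{\vee}$, which carries essentially all the difficulty. Its proof requires Boucksom's divisorial Zariski decomposition $D=P+N$ of a pseudoeffective class and the associated mobile intersection products $\langle P^{\,k}\rangle$, the differentiability of the volume function on the big cone with $\tfrac{d}{dt}\operatorname{vol}(D+tA)\big|_{t=0}=n\,\langle P^{\,n-1}\rangle\cdot A$, and the orthogonality estimate $\langle P^{\,n-1}\rangle\cdot N=0$; these are combined to separate a non-pseudoeffective class from $\overline{\operatorname{ME}}(X)$ by a genuine movable curve, after passing from closed positive currents to algebraic cycles by Demailly's regularization theorem (or, in the algebraic approach via Okounkov bodies and Fujita approximation, by the variational characterization of the volume). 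This analytic and asymptotic machinery --- Monge--Amp\`ere theory, regularization of currents, Fujita-type approximation --- is where the real work lies; by comparison the Mori-theoretic ingredients, bend-and-break and covering families of rational curves, are routine.
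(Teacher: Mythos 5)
The paper itself gives no proof of Theorem \ref{f-thm5.16}: being a survey, it simply quotes the result from \cite{bdpp}, so there is no internal argument to measure yours against. That said, your outline faithfully reproduces the strategy of the original proof. The easy implication (uniruled implies $K_X$ not pseudoeffective) via a covering family of free rational curves with $-K_X\cdot C_t\ge 2$, intersecting with $K_X+\varepsilon A$ and letting $\varepsilon\to 0$, is correct. For the converse, reducing to the duality between the pseudoeffective cone and the closed cone generated by strongly movable classes, using openness of the condition $K_X\cdot\gamma<0$ to extract an honest covering family of curves of negative canonical degree, and concluding by the Miyaoka--Mori bend-and-break criterion is exactly how the corollary is deduced in \cite{bdpp}. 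The one point to be explicit about is that what you have written is an outline rather than a self-contained proof: essentially all of the difficulty is concentrated in the cone duality (orthogonality estimate, approximate Zariski decomposition, regularization of currents or Fujita approximation), which you invoke as a black box. Since that duality is the main theorem of \cite{bdpp} and the statement at hand is its corollary there, this division of labour is legitimate, but the citation should be made precise rather than re-sketched.
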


For the reader's convenience, 
we recall 
the definition of uniruled varieties. 

\begin{defn}\label{f-def5.17}
Let $X$ be a smooth projective variety with $\dim X=n$. 
Then $X$ is uniruled if there exist a smooth projective 
variety $Y$ with 
$\dim Y=n-1$ and a dominant rational map $Y\times \mathbb P^1\dashrightarrow 
X$. 
\end{defn}

Therefore, Conjecture \ref{f-conj5.13} says 
that the Kodaira dimension of $X$ is nonnegative 
if $X$ is not covered by rational curves. 
Thus, Conjecture \ref{f-conj5.13} looks very reasonable. 
However, I do not know how to attack it. 

\begin{rem}[Added in September 2016]\label{f-rem5.18}
By Kenta Hashizume's recent result, 
Conjecture \ref{f-conj5.13} implies that 
any projective log canonical pair $(X, \Delta)$ such that 
$K_X+\Delta$ is pseudoeffective 
has a minimal model. For the details, see \cite{hashizume}. 
\end{rem}

\section{Appendix}\label{f-sec6}

In this appendix, we collect some definitions, which 
may help us understand this paper, 
for the 
reader's convenience. 
For the details, see \cite{fujino-fundamental} and \cite{fujino-foundation}. 
Recall that a scheme means a separated scheme of finite type 
over $\mathbb C$ in this paper. 

\begin{say}[Iitaka dimension and Kodaira dimension]\label{f-say6.1}
Let $X$ be a normal projective variety and let $D$ be 
a $\mathbb Q$-Cartier divisor on $X$. 
Then $\kappa (X, D)$ denotes the Iitaka dimension of $D$. 
More precisely, 
$$
\kappa (X, D)=
\underset{m\to \infty}{\lim\sup}\frac{\log \dim H^0(X, \mathcal O_X(
\lfloor mD\rfloor))}
{\log m}.  
$$ 
For the definition of $\lfloor mD \rfloor$, see 
\ref{f-say6.4} below.  
Let $X$ be a smooth projective variety. 
Then we put $\kappa (X)=\kappa (X, K_X)$ and call it 
the Kodaira dimension of $X$. 
\end{say}

\begin{say}[$\mathbb Q$-factorial]\label{f-say6.2}
Let $X$ be a normal variety. 
Then $X$ is called $\mathbb Q$-factorial if 
every prime divisor $D$ on $X$ is $\mathbb Q$-Cartier. 
\end{say}

Note that $\mathbb Q$-factoriality sometimes plays 
crucial roles in the minimal model program. 
The notion of terminal singularities and canonical singularities is 
indispensable in the minimal model program. 

\begin{say}[Terminal singularities and canonical singularities]\label{f-say6.3}
Let $X$ be a normal variety such that $K_X$ is $\mathbb Q$-Cartier. 
If there exists a projective birational morphism 
$f:Y\to X$ from a smooth variety $Y$ such that 
the exceptional locus $\mathrm{Exc}(f)=\sum _{i\in I} E_i$ is a simple 
normal crossing divisor on $Y$. 
In this situation, we can write 
$$
K_Y=f^*K_X+\sum _{i\in I}a_i E_i. 
$$ 
If $a_i>0$ for every $i\in I$, then 
we say that $X$ has only terminal singularities. 
If $a_i\geq 0$ for every $i\in I$, 
then we say that $X$ has only canonical singularities. 
It is well-known that 
$X$ has only rational singularities when $X$ has only 
canonical singularities. 
\end{say}

\begin{say}[Round-down of $\mathbb Q$-divisors]\label{f-say6.4}
Let $D=\sum a_iD_i$ be a $\mathbb Q$-divisor on a normal variety $X$. 
Note that $D_i$ is a prime divisor for every $i$ and 
that $D_i\ne D_j$ for $i\ne j$. 
Of course, $a_i \in \mathbb Q$ for every $i$. 
We put $\lfloor D\rfloor= \sum \lfloor a_i \rfloor D_i$ and 
call it the round-down of $D$. 
For every rational number $x$, $\lfloor x\rfloor$ is the integer defined 
by $x-1<\lfloor x\rfloor \leq x$. 
\end{say}

In the minimal model program, 
we usually use the notion of pairs. 

\begin{say}[Singularities of pairs]\label{f-say6.5}
A pair $(X, \Delta)$ consists of a normal variety $X$ 
and an effective $\mathbb R$-divisor 
$\Delta$ on $X$ such that $K_X+\Delta$ is $\mathbb R$-Cartier. 
A pair $(X, \Delta)$ is called 
kawamata log terminal (resp.~log canonical) if for any 
projective birational morphism 
$f:Y\to X$ from a normal variety $Y$, 
$a(E, X, \Delta)>-1$ (resp.~$\geq -1$) for 
every $E$, where 
$$K_Y=f^*(K_X+\Delta)+\sum _E a(E, X, \Delta)E$$ 
defines $a(E, X, \Delta)$.  
Let $(X, \Delta)$ be a log canonical pair and let $W$ be a 
closed subset of $X$. Then $W$ is called a log canonical 
center of $(X, \Delta)$ if 
there are a projective birational morphism $f:Y\to X$ from a normal 
variety $Y$ and a prime divisor $E$ on $Y$ 
such that $a(E, X, \Delta)=-1$ 
and 
that $f(E)=W$. 
\end{say}

To help understand Theorem \ref{f-thm2.12}, 
Theorem \ref{f-thm2.13}, and Theorem \ref{f-thm3.6}, 
we quickly explain the notion of simple normal crossing pairs and 
some related topics. 

\begin{say}[Simple normal crossing pairs]\label{f-say6.6}
Let $Z$ be a simple normal crossing divisor 
on a smooth 
variety $M$ and let $B$ be an $\mathbb R$-divisor 
on $M$ such that 
$\Supp (B+Z)$ is a simple normal crossing divisor and that 
$B$ and $Z$ have no common irreducible components. 
We put $\Delta_Z=B|_Z$ and consider the pair $(Z, \Delta_Z)$. 
We call $(Z, \Delta_Z)$ a {\em{globally embedded}} simple normal 
crossing pair. 
A pair $(X, \Delta)$ is called a simple normal crossing 
pair if it is Zariski locally isomorphic to 
a globally embedded simple normal crossing 
pair. 
If $(X, \Delta)$ is a simple normal crossing pair and $X$ is a 
divisor on a smooth variety $M$, then 
$(X, \Delta)$ is called an embedded simple normal crossing pair. 
Of course, a globally embedded simple normal crossing pair is 
automatically an embedded simple normal crossing pair. 
\end{say}
\begin{say}[Strata and permissibility]\label{f-say6.7}
Let $(X, \Delta)$ be a simple normal crossing pair. 
Assume that the coefficients of $\Delta$ are in $[0, 1]$. 
Let $\nu:X^\nu \to X$ be the normalization. 
We put $$
K_{X^\nu}+\Theta=\nu^*(K_X+\Delta), 
$$ 
that is, $\Theta$ is the sum of the inverse images of 
$\Delta$ and the singular locus of $X$. 
Then we see that 
$(X^\nu, \Theta)$ is log canonical. 
Let $W$ be a closed subset of $X$. 
Then $W$ is called a stratum of $(X, \Delta)$ if $W$ is 
an irreducible component of $X$ or the $\nu$-image 
of some log canonical center of $(X^\nu, \Theta)$. 
A Cartier divisor $D$ on $X$ is permissible 
with respect to $(X, \Delta)$ if 
$D$ contains no strata of $(X, \Delta)$ in its support. 
A finite $\mathbb R$-linear combination of 
permissible Cartier divisors with respect to $(X, \Delta)$ is called 
a permissible $\mathbb R$-divisor with respect to $(X, \Delta)$. 
\end{say}

We need the notion of nef and log big divisors for Theorem \ref{f-thm2.13}. 

\begin{say}[Nef and log big divisors]\label{f-say6.8} 
Let $f:(Y, \Delta)\to X$ be a proper morphism 
from a simple normal crossing pair $(Y, \Delta)$ to 
a scheme $X$. Assume that the coefficients of $\Delta$ are in $[0, 1]$. 
Let $\pi:X\to V$ be a proper morphism 
between schemes. 
Let $H$ be an $\mathbb R$-Cartier divisor 
on $X$. 
Then $H$ is nef and log big over $V$ with 
respect to 
$f:(Y, \Delta)\to X$ if 
$H$ is nef over $V$ and $H|_{f(W)}$ is big over 
$\pi\circ f(W)$ for every stratum $W$ of $(Y, \Delta)$. 
Note that if $H$ is ample over $V$ then 
$H$ is nef and log big over $V$ with respect to 
$f:(Y, \Delta)\to X$. 
\end{say}

For Theorem \ref{f-thm2.12} and Theorem \ref{f-thm2.13}, 
let us recall the definition of $\mathbb R$-linear equivalence. 

\begin{say}[$\mathbb R$-divisors]
\label{f-say6.9}
Let $B_1$ and $B_2$ be two $\mathbb R$-Cartier divisors 
on a scheme $X$. 
Then $B_1$ is linearly (resp.~$\mathbb Q$-linearly, 
or $\mathbb R$-linearly)
equivalent to 
$B_2$, denoted by $B_1\sim B_2$ 
(resp.~$B_1\sim _{\mathbb Q} B_2$, or 
$B_1\sim _{\mathbb R}B_2$) if 
$$
B_1=B_2+\sum _{i=1}^k r_i (f_i)
$$ 
such that $f_i \in \Gamma (X, \mathcal K_X^*)$ and $r_i\in \mathbb Z$ (resp.~$r_i \in \mathbb Q$, or $r_i \in 
\mathbb R$) for every $i$. Here, $\mathcal K_X$ is 
the sheaf of total quotient rings of 
$\mathcal O_X$ and $\mathcal K_X^*$ 
is the sheaf of invertible elements in the sheaf of rings $\mathcal K_X$. 
We note that 
$(f_i)$ is a principal Cartier divisor 
associated to $f_i$, that is, 
the image of $f_i$ by 
$
\Gamma (X, \mathcal K_X^*)\to\Gamma (X, \mathcal K_X^*/\mathcal O_X^*)$, 
where $\mathcal O_X^*$ is the sheaf of invertible elements in $\mathcal O_X$. 

Let $f:X\to Y$ be a morphism between schemes. 
If there is an $\mathbb R$-Cartier divisor $B$ on $Y$ such that 
$$
B_1\sim _{\mathbb R}B_2+f^*B, 
$$ 
then $B_1$ is said to be relatively $\mathbb R$-linearly equivalent to 
$B_2$. It is denoted by $B_1\sim _{\mathbb R, f}B_2$ or 
$B_1\sim _{\mathbb R, Y}B_2$. 
\end{say}


\end{document}